\def \lg {{\ell g}}
\theoremstyle{definition}
\newtheorem{theorem}{Theorem}[section]
\newtheorem{lemma}[theorem]{Lemma}
\newtheorem{definition}[theorem]{Definition}
\newtheorem{remark}[theorem]{Remark}
\begin{document}
\author{Douglas Ulrich \thanks{The author was partially supported by NSF Research Grant DMS-1308546.}
	 \\ Department of Mathematics\\University of Maryland, College Park}
	\title{The Number of Atomic Models of Uncountable Theories}
	\date{\today} 
	
	\maketitle

\begin{abstract}
We show there exists a complete theory in a language of size continuum possessing a unique atomic model which is not constructible. We also show it is consistent with $ZFC + \aleph_1 < 2^{\aleph_0}$ that there is a complete theory in a language of size $\aleph_1$ possessing a unique atomic model which is not constructible. Finally we show it is consistent with $ZFC + \aleph_1 < 2^{\aleph_0}$ that for every complete theory $T$ in a language of size $\aleph_1$, if $T$ has uncountable atomic models but no constructible models, then $T$ has $2^{\aleph_1}$ atomic models of size $\aleph_1$.
\end{abstract}

%%%%%%%%%%%%%%%%%%%
\section{Introduction}
\label{Intro}
%%%%%%%%%%%%%%%%%%%%

There are several model-theoretic notions of ``smallness," namely: a model $M$ is \emph{atomic} if every tuple $\overline{a} \in M$ has its type isolated by a single formula; a model $M$ is \emph{prime} if for every $N \equiv M$, there is an elementary embedding of $M$ into $N$; and a model $M$ is \emph{constructible} if there is a sequence $M = (a_\alpha: \alpha < \alpha^*)$ such that each $tp(a_\alpha / \{a_\beta: \beta < \alpha\})$ is isolated by a single formula.

If we are just interested in the complete theories in countable languages, then then these notions all coincide, by an old theorem of Vaught \cite{vaught} (essentially):

\begin{theorem}\label{vaught}
For models of $T$ a countable complete theory, the notions ``countable atomic," ``prime" and ``constructible" coincide. Such a model exists if and only if the isolated types are dense in the Stone spaces $S^n(\emptyset)$ for all $n$; when they exist they are unique up to isomorphism.
\end{theorem}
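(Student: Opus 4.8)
The plan is to split the statement into three tasks: the existence-and-density criterion, uniqueness up to isomorphism, and the coincidence of the three notions; and to route everything through two standard tools, namely the fact that isolation of a tuple's type over $\emptyset$ descends to the type of a subtuple over the rest, together with the Omitting Types Theorem. For the descent fact I would record that if $\theta(\bar x, y)$ isolates $tp(\bar a b/\emptyset)$, then $\theta(\bar a, y)$ isolates $tp(b/\bar a)$, since $T \vdash \forall \bar x\,\forall y(\theta \to \chi)$ for every $\chi$ in $tp(\bar a b/\emptyset)$ specializes at $\bar x = \bar a$.

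For the coincidence I would prove a cycle. First, \textbf{countable atomic $\Rightarrow$ constructible}: enumerate $M = (a_n : n < \omega)$ and note that since $tp(a_0\cdots a_n/\emptyset)$ is isolated for each $n$, the descent fact makes each $tp(a_n/a_0\cdots a_{n-1})$ isolated, so the enumeration is a constructing sequence. Next, \textbf{constructible $\Rightarrow$ prime}: given a constructing sequence $(a_\alpha : \alpha < \alpha^*)$ and any $N \equiv M$, build an elementary embedding by transfinite recursion; at stage $\alpha$ the type of $a_\alpha$ over the previously placed elements is isolated by some $\theta(x,\bar b)$, and $N \models \exists x\,\theta(x, f(\bar b))$ because the partial map $f$ is elementary, so a witness extends the embedding. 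Finally, \textbf{prime $\Rightarrow$ countable atomic}: a prime model embeds into the countable model furnished by Löwenheim--Skolem and is therefore countable; and if some tuple of $M$ realized a non-isolated type $p$, the Omitting Types Theorem would yield a model of $T$ omitting $p$, into which $M$ could not embed elementarily, contradicting primeness.

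For uniqueness I would run a back-and-forth between two countable atomic models $M$ and $N$. To extend a partial elementary map by a new $a \in M$, use atomicity of $M$ to isolate $tp(a/\bar m)$ by some $\theta(x,\bar m)$; then $N \models \exists x\,\theta(x, f(\bar m))$ by partial elementarity, and any such witness extends the map while preserving elementarity precisely because $\theta$ isolates the whole type. Alternating the roles of $M$ and $N$ along enumerations of both domains produces a surjection, hence an isomorphism.

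The density criterion is where I expect the real work, and I regard the $(\Leftarrow)$ direction as the main obstacle. The $(\Rightarrow)$ direction is immediate: if a countable atomic $M$ exists, any consistent $\varphi(\bar x)$ is realized in $M$ by a tuple whose isolated type contains $\varphi$, witnessing density. For $(\Leftarrow)$ I would carry out a Henkin construction in the language augmented by constants $c_0, c_1, \dots$, building an increasing chain of complete isolated types for the finite tuples of constants: at each stage I simultaneously witness a pending existential, so that the resulting term model is genuinely a model of $T$, and invoke density to extend the current finite requirement to a \emph{complete isolated} type over the enlarged tuple, so that every tuple of the final structure realizes an isolated type. The delicate point is reconciling the Henkin witnessing demands with the requirement that isolation be maintained at every step; density is exactly the hypothesis that keeps these two demands compatible, which is why the criterion comes out as stated.
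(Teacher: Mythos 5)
Your proposal is correct. Note that the paper never proves Theorem~\ref{vaught}: it is quoted as a classical background result of Vaught, so there is no internal proof to compare against, and your argument is the standard textbook one. All steps check out: the descent fact, the cycle (countable atomic $\Rightarrow$ constructible $\Rightarrow$ prime $\Rightarrow$ countable atomic, the last via L\"owenheim--Skolem plus Omitting Types), and the back-and-forth for uniqueness, where as you say any witness of the isolating formula works, so the alternation closes off to an isomorphism. In the Henkin construction for the $(\Leftarrow)$ direction, the one point you leave implicit deserves a sentence: when density hands you a complete isolated type $q(\bar x, y)$ containing $\theta_n(\bar x) \wedge \psi(y, \bar x)$, where $\theta_n$ isolates the current type $p_n$, the restriction of $q$ to $\bar x$ equals $p_n$ precisely because $\theta_n \in q$ and $\theta_n$ generates the complete type $p_n$; this is what makes the chain of isolated types genuinely increasing and is the exact mechanism reconciling the witnessing demands with isolation. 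Alternatively, you could shortcut $(\Leftarrow)$ using the Omitting Types Theorem you already invoke for prime $\Rightarrow$ atomic: for each $n$ omit the partial type $\Sigma_n(\bar x) = \{\neg\theta(\bar x) : \theta \text{ isolates a complete } n\text{-type}\}$; density says exactly that no consistent formula implies $\Sigma_n$, i.e.\ each $\Sigma_n$ is non-isolated, and a countable model of $T$ omitting every $\Sigma_n$ is precisely a countable atomic model. That version trades your explicit Henkin bookkeeping for a second appeal to a theorem whose proof is that same bookkeeping, so the two are equivalent in content.
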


When we ask about theories in uncountable languages, things get harder. We have the following examples:

\begin{itemize}
\item Laskowski and Shelah \cite{LaskShelah}: there is a complete theory $T$ in a language of size $\aleph_2$, such that the isolated types are dense in $S^n(\emptyset)$ for all $n$, but $T$ has no atomic models.
\item Knight \cite{knight}: there is a complete theory in a language of size $\aleph_1$, with atomic models but no prime models.
\item Folklore: there is a complete theory in a language of size continuum, with prime models but no atomic models. Namely $Th(2^\omega, \mathbf{f}, U_n)_{f \in 2^\omega, n \in \omega}$, where $\mathbf{f}(g) = f \oplus g$ mod 2, and $U_n(g)$ holds iff $g(n) = 1$.
\item Shelah \cite{ShelahEx}: there is a complete theory in a language of size $\aleph_1$, with models that are atomic but not prime, and with models that are prime but not constructible, and with a constructible model. Namely $Th(\omega^{\omega_1}, E_\alpha: \alpha < \omega_1)$, where $\eta E_\alpha \tau$ iff $\eta \restriction_\alpha = \tau \restriction_\alpha$.
\end{itemize}

The following, one of the few positive results, was proved by Ressayre, see for example \cite{modelTheory}:

\begin{theorem}\label{constructibleModels}
Let $T$ be a complete theory in an arbitrary language. If $T$ has a constructible model $M$, then $M$ is unique up to isomorphism; it is furthermore prime and atomic. Also, the construction sequence for $M$ can be chosen of order type $|T|$.
\end{theorem}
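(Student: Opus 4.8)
The plan is to separate the statement into its four assertions---atomic, prime, the bound on the order type, and uniqueness---and to isolate at the outset the one tool used everywhere: transitivity of isolation for finite tuples. Concretely, if $tp(\overline{a}/B)$ is isolated by $\psi(\overline{x})$ and $tp(\overline{b}/B\overline{a})$ is isolated by $\varphi(\overline{y},\overline{a})$, then $tp(\overline{a}\,\overline{b}/B)$ is isolated by $\psi(\overline{x}) \wedge \varphi(\overline{y},\overline{x})$; I would prove this by pulling an arbitrary $\theta(\overline{x},\overline{y})$ over $B$ back through the two isolating formulas, first absorbing it into $\varphi$ over $B\overline{a}$ and then into $\psi$ over $B$. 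I would also record the companion fact that existentially projecting an isolated type onto a subtuple keeps it isolated. Write $M=(a_\alpha:\alpha<\alpha^*)$ for the construction, with $tp(a_\alpha/\{a_\beta:\beta<\alpha\})$ isolated by $\varphi_\alpha(x,\overline{c}_\alpha)$ for a finite $\overline{c}_\alpha$ drawn from $\{a_\beta:\beta<\alpha\}$.

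For atomicity, fix a finite $\overline{a}\subseteq M$ and let $S$ be the least set of indices containing those of $\overline{a}$ and closed under adjoining the indices occurring in $\overline{c}_\alpha$. Each index has only finitely many immediate predecessors under this relation, all of them strictly smaller ordinals, so were $S$ infinite König's lemma would yield an infinite strictly descending sequence of ordinals---impossible. Hence $S$ is finite, and $\{a_\alpha:\alpha\in S\}$ listed in increasing order of index is a finite construction over $\emptyset$; iterating transitivity of isolation shows the tuple enumerating it has isolated type over $\emptyset$, and projecting onto the coordinates of $\overline{a}$ finishes. Thus $M$ is atomic.

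For primeness, given $N\equiv M$ I would build an elementary $f:M\to N$ by recursion: at a successor stage $f$ is elementary on $\{a_\beta:\beta<\alpha\}$, so from $N\models\exists x\,\varphi_\alpha(x,f(\overline{c}_\alpha))$ choose a witness $b$ and set $f(a_\alpha)=b$; that $\varphi_\alpha$ isolates the type over the predecessors, together with elementarity of $f$ on $\overline{c}_\alpha$, keeps $f$ elementary, and unions handle limits. Since $M$ is now prime it embeds into a model of size $\le|T|$ supplied by Löwenheim--Skolem, so $|M|\le|T|$; discarding steps where $a_\alpha$ already appears and, if necessary, padding with repetitions (whose type is isolated by $x=c$) then produces a construction of order type exactly $|T|$.

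Uniqueness is where the real work lies. Let $M,N$ both be constructible, hence both atomic. Atomicity already gives both one-step extension properties for \emph{finite} partial elementary maps: to send a new $a'\in M$ one realizes the $f$-image of the isolated type $tp(\overline{a}\,a'/\emptyset)$ in $N$, and symmetrically using atomicity of $N$; for countable $M,N$ this ordinary back-and-forth yields $M\cong N$. The obstacle in the uncountable case is exhaustion: the partial maps must be transfinitely long, so their domains become infinite, and over an infinite base an isolated type need not remain isolated, so the extension step can fail. The remedy I would pursue is to run the back-and-forth while maintaining, on each side, that the ambient model is still constructible over the current image---this being exactly what revives the isolation needed to extend---and to grow the images by finite isolated blocks, using on the one hand that constructibility over $C$ implies atomicity over $C$ (the relativized König argument) to transport types, and on the other the preservation lemma that constructibility over $B$ passes to constructibility over $B\cup\{a\}$, so that single extensions keep the invariant. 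The genuinely delicate point, and the one I expect to be the main obstacle, is the behaviour at limit stages: the invariant ``$M$ is constructible over the current image'' is not automatically preserved under increasing unions (a point can become a two-sided limit of the accumulated parameters, destroying isolation), so the increments must be chosen---via transitivity and continuity of constructibility along chains---so as to forbid such bad limits. Controlling the limit stages, rather than the bookkeeping of the individual forth- and back-steps, is the crux of the argument.
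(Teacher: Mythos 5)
First, a point of comparison: the paper itself gives no proof of Theorem~\ref{constructibleModels}; it is quoted as a known theorem of Ressayre (see \cite{modelTheory}), so your attempt can only be measured against the standard argument. Your atomicity and primeness proofs are correct and are essentially the standard ones: transitivity of isolation plus the K\"onig-type argument that the closure of a finite index set under the parameter-index relation is finite. The other two assertions, however, have genuine gaps. For the order-type claim, deleting repetitions and padding cannot work: deleting repetitions yields an injective construction whose order type is some ordinal of cardinality $\leq |T|$, but that ordinal may exceed $|T|$, and padding only lengthens it. Concretely, for $T$ the theory of an equivalence relation with two infinite classes, listing all of one class and then all of the other is an injective construction of the prime model of order type $\omega \cdot 2 > \omega = |T|$; no deletion or padding fixes this. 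One must genuinely \emph{reorder} the construction, and the reordering requires exactly the lemma you are missing for uniqueness.

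That missing idea is the notion of a \emph{closed} (self-sufficient) set of indices of a fixed construction $(a_\alpha : \alpha < \alpha^*)$: a set $D \subseteq \alpha^*$ is closed if for each $\alpha \in D$ the indices of $\overline{c}_\alpha$ lie in $D$. Your atomicity argument already uses this notion for finite sets, but you do not carry it into the uniqueness proof. The key lemma is: for closed $D$, the remaining elements $(a_\alpha : \alpha \notin D)$, in increasing order, form a construction of $M$ over $A_D = \{a_\alpha : \alpha \in D\}$ (proved by induction along the construction, using both directions of the pair lemma, i.e.\ symmetry of isolation); your ``preservation lemma'' is a corollary of this, not an independent citable black box. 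With it, uniqueness is a back-and-forth whose invariant is that the domain is $A_D$ and the range is $B_E$ for \emph{closed} sets $D$, $E$ of the two fixed constructions: one extension step consists of $\omega$ micro-steps, alternately adjoining a finite closure on one side and realizing the (isolated, by the key lemma) type of the new finite block on the other, so that after $\omega$ micro-steps both sides are again exactly of the form ``closed set.'' Since closures of finite sets are finite and unions of closed sets are closed, the limit stages that you correctly identify as the crux --- and explicitly leave unresolved (``the increments must be chosen \ldots so as to forbid such bad limits'') --- become automatic. As written, your uniqueness argument is a strategy with an acknowledged hole at precisely the decisive point: the invariant ``$M$ is constructible over the current image'' is, as you yourself note, not continuous under unions, and the repair is to replace it by the union-continuous invariant of closedness in a fixed construction rather than to constrain the increments ad hoc.
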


And the following was proved independently by Knight \cite{knight}, Kueker \cite{KuekerIso} and Shelah \cite{ShelahIso}:

\begin{theorem}\label{atomicModelsAtAleph1}
Let $T$ be a complete theory in a language of size $\aleph_1$. Then $T$ has an atomic model if and only if the isolated types are dense in $S^n(\emptyset)$ for all $n$.
\end{theorem}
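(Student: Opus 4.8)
The plan is to prove the two directions separately, with essentially all the work in the passage from density to existence. For the easy direction, if $M$ is atomic then any $L$-formula $\varphi(\bar x)$ consistent with $T$ is, by completeness of $T$, realized by some $\bar a\in M$; since $tp(\bar a)$ is isolated and contains $\varphi$, the isolated types meet every nonempty clopen subset of $S^n(\emptyset)$, i.e.\ they are dense. This argument uses nothing about $|L|$. For the converse I would build the atomic model as an increasing, continuous union $A=\bigcup_{\xi<\omega_1}A_\xi$ of countable \emph{atomic sets}, that is, sets every finite tuple of which realizes an isolated (complete-over-$L$) type over $\emptyset$, working inside a monster model $\mathfrak{C}\models T$. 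I would start with a single element realizing an isolated $1$-type (available by density) and arrange, by a standard bookkeeping over the $\aleph_1$ many witnessing requirements $(\bar a,\psi)$, that the final set is closed under witnesses. Closure under witnesses makes $A\prec\mathfrak{C}$ by Tarski--Vaught, so $A\models T$; and since each finite tuple of $A$ already lies in some countable $A_\xi$, the set $A$ is atomic. Limit stages are free for exactly this reason, so the only real work lies at successor stages, where one new element must be added.

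Everything therefore reduces to the key extension lemma: if $A$ is a \emph{countable} atomic set and $\psi(\bar a,y)$ is consistent for some $\bar a\in A$, then there is $b$ with $A\cup\{b\}$ atomic and $\mathfrak{C}\models\psi(\bar a,b)$. To prove it I would enumerate $A=\{a_n:n<\omega\}$ and build, in $\omega$ steps, a chain of complete (isolated) formulas $\theta_k(y,z_0,\dots,z_k)$ with $\theta_{k+1}\vdash\theta_k$, each $\theta_k$ extending the isolating formula $\varphi_k$ of $(a_0,\dots,a_k)$ and realized by some witness together with $(a_0,\dots,a_k)$, starting from a complete $\theta_0\vdash\psi\wedge\varphi_{\bar a}$ obtained from density. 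The inductive step incorporates the next parameter $a_{k+1}$: the conjunction $\theta_k\wedge\varphi_{k+1}$ is consistent, witnessed by the previous-stage realization together with the genuine parameters $(a_0,\dots,a_{k+1})$, so density in one more variable yields a complete $\theta_{k+1}$ below it; since $\theta_{k+1}\vdash\varphi_{k+1}$ and $\varphi_{k+1}$ is complete, $\exists y\,\theta_{k+1}$ is forced into $tp(a_0,\dots,a_{k+1})$, so $\theta_{k+1}$ is actually realized over those parameters. The union of the $\theta_k$ is a consistent partial type over $A$; realizing any completion gives $b$, and for any finite $\bar a'\subseteq A$, choosing $k$ past the indices occurring in $\bar a'$ and projecting the single complete formula $\theta_k$ shows $tp(b\bar a')$ is isolated, so $A\cup\{b\}$ is atomic.

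The extension lemma is the crux, and the delicate point is exactly the feature that makes the theorem sharp: each $\theta_k$ is a \emph{single} formula, so it can carry only finitely many parameters, and it automatically decides every $L$-formula on its tuple, which is why no separate enumeration of the uncountably many formulas is needed. Consequently the construction can run only for $\omega$ steps, leaving no room for a limit stage inside the lemma, and this is possible precisely when $A$ is countable, so that every finite tuple is absorbed into some $\theta_k$. This is why the global chain can be pushed through to produce a model of size $\aleph_1$ but no further: to reach size $\aleph_2$ one would have to extend atomic sets that are already uncountable, where the lemma fails, in harmony with the Laskowski--Shelah phenomenon at $\aleph_2$ recalled above. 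The remaining ingredients---consistency of the conjunction in the inductive step, and a bookkeeping that lists the $\aleph_1$ requirements so that each is met after its parameters have entered the chain---are routine, and the only verification I would still need to make explicit is that the witness produced for each requirement actually lands in the elementary submodel, which the bookkeeping guarantees.
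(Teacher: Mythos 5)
Theorem~\ref{atomicModelsAtAleph1} is one of the background results this paper quotes without proof (it is attributed to Knight, Kueker and Shelah), so there is no in-paper argument to compare yours against; judged on its own terms, your proposal is correct, and it is essentially the standard direct construction. The reduction is the right one: atomicity is a condition on finite tuples, so unions of increasing chains of countable atomic sets are countable atomic sets, limit stages are free, and with routine bookkeeping over the $\aleph_1$ many Tarski--Vaught requirements everything rides on the extension lemma for a \emph{countable} atomic set $A$. Your proof of that lemma is sound, and the step you isolate is indeed the crux: since $\theta_{k+1}$ is a complete formula implying the complete formula $\varphi_{k+1}$ isolating $tp(a_0,\dots,a_{k+1})$, the consistent formula $\exists y\,\theta_{k+1}$ must be implied by $\varphi_{k+1}$, hence holds of the genuine parameters; this is what keeps the chain of witnessing formulas anchored to the actual enumeration of $A$ instead of drifting to some other realization of the same types. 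The set $\{\theta_k(y,a_0,\dots,a_k): k<\omega\}$ is then a decreasing chain of formulas, each realized with the true parameters, hence finitely satisfiable; any realization $b$ has $tp(b,a_0,\dots,a_k)$ isolated by $\theta_k$ for every $k$, and since subtuples and permutations of tuples realizing isolated types realize isolated types, $A\cup\{b\}$ is atomic. Two features of your write-up deserve emphasis because they are exactly right: you only claim that $A\cup\{b\}$ is an atomic \emph{set}, not that $tp(b/A)$ is isolated --- this weaker conclusion is all the construction needs, and the distinction is precisely the atomic-versus-constructible gap the rest of the paper is about; and your explanation of where countability of $A$ enters (an $\omega$-chain of single formulas, each carrying finitely many parameters, must absorb an enumeration of $A$) correctly locates why the theorem stops at languages of size $\aleph_1$, consistent with the Laskowski--Shelah counterexample at $\aleph_2$ cited in Section~\ref{Intro}.
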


In this paper we are specifically interested in looking at the atomic models of $T$; we wonder when, for example, there exists a constructible model. Knight's example above shows that the answer is ``not always" but we would like to say more. In fact Knight's example has $2^{\aleph_1}$ models of size $\aleph_1$. We wonder if this is a necessary feature: that is, suppose $T$ is a complete theory in a language of size $\kappa$, with a unique atomic model $M$ of size $\leq \kappa$. Must $M$ be constructible?

By Vaught's Theorem~\ref{vaught}, for $\kappa = \aleph_0$ we know this to be true. We introduce the following examples to show it is false for $\kappa = 2^{\aleph_0}$.

\vspace{ 2mm}

\noindent \textbf{First Example: Theorem~\ref{continuumTheorem}.} There is a complete theory in a langauge of size continuum, with a unique atomic model, which is not prime. (Hence there are no prime models.) 

\vspace{2 mm}

\noindent \textbf{Second Example: Remark~\ref{ThirdExample}.} There is a complete theory in a language of size continuum, with a unique atomic model, which is furthermore prime, but which is not constructible. 

\vspace{2 mm}

Do we need continuum? This is only interesting if $\aleph_1 < 2^{\aleph_0}$; and with that assumption it turns out to be independent of $ZFC$. In particular we have the following theorems:

\vspace{2 mm}

\noindent \textbf{Third Example: Theorem~\ref{unifTheorem}.} IT is consistent with $ZFC + \aleph_1 < 2^{\aleph_0}$ that there is a complete theory in a language of size $\aleph_1$, with a unique atomic model, which is furthermore prime, but which is not constructible.

\begin{theorem}
\label{submain}
It is consistent with $ZFC + \aleph_1 < 2^{\aleph_0}$ that whenever $T$ is a complete theory in a language of size $\aleph_1$, if $T$ has atomic models but no constructible models, then $T$ has $2^{\aleph_1}$ atomic models of size $\aleph_1$.
\end{theorem}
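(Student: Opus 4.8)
The plan is to exhibit a single model of $ZFC$ satisfying $\aleph_1 < 2^{\aleph_0}$ in which the desired dichotomy holds for all $T$; the natural candidate is any model of $2^{\aleph_0} < 2^{\aleph_1}$ together with $\aleph_1 < 2^{\aleph_0}$ (for instance, force over a model of $GCH$ by adding $\aleph_2$ Cohen reals and $\aleph_3$ Cohen subsets of $\omega_1$, yielding $2^{\aleph_0} = \aleph_2 < 2^{\aleph_1} = \aleph_3$). The purpose of $2^{\aleph_0} < 2^{\aleph_1}$ is that it yields the Devlin--Shelah weak diamond $\Phi_{\aleph_1}$, a prediction principle strong enough to defeat the uniformization phenomena that (as in Theorem~\ref{unifTheorem}) allow a non-constructible theory to have few atomic models. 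Throughout I would work with the contrapositive: fix $T$ with atomic models but fewer than $2^{\aleph_1}$ atomic models of size $\aleph_1$, and aim to produce a constructible model, using Theorem~\ref{constructibleModels} and Theorem~\ref{atomicModelsAtAleph1} freely.

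The first and main component is an \emph{engine lemma} converting non-constructibility into branching. Represent a candidate atomic model of size $\aleph_1$ as a continuous increasing union $M = \bigcup_{\alpha<\omega_1} M_\alpha$ of countable atomic subsets. Constructibility of $M$ amounts to being able to choose the chain so that each extension $M_\alpha \subseteq M_{\alpha+1}$ adds its elements with types isolated over the predecessors. I would show: if $T$ has \emph{no} constructible model, then however one attempts the construction, there are stationarily many stages $\delta$ at which the atomic-compatible types over $M_\delta$ --- those completions realizable inside some atomic model, a condition on types over $\emptyset$ rather than over $M_\delta$ --- are not all isolated over $M_\delta$. At such a $\delta$ the tree of atomic-compatible completions has at least two incompatible non-isolated branches, giving two countable atomic extensions $B_0, B_1 \supseteq M_\delta$ that are non-isomorphic over $M_\delta$. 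The delicate issue is that branching at a single stage is cheap; what must be ruled out is \emph{steering} the construction onto a club of stages where an isolated extension is always available, since that would manufacture a constructible model. This is exactly where weak diamond enters: the prediction principle is used to guess, and thereby block, any such steering strategy, forcing the branching set $S \subseteq \omega_1$ to be stationary.

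Granting the engine, I would build a tree $\langle M_\eta : \eta \in 2^{\omega_1}\rangle$ of atomic models of size $\aleph_1$ by branching at the stages $\delta \in S$, choosing $B_{\eta(\delta)}$, and taking isolated (hence canonical) extensions off the branching stages and continuous unions at limits. The non-structure step is then to show $M_\eta \cong M_\nu$ implies $\eta =_{\mathrm{club}} \nu$: any isomorphism $f$ restricts, on a club $C$, to isomorphisms $M_\eta^\delta \cong M_\nu^\delta$, and at a branching stage $\delta \in S \cap C$ such a local isomorphism extends to $M_\eta^{\delta+1} \cong M_\nu^{\delta+1}$ only if $\eta(\delta) = \nu(\delta)$, because $B_0$ and $B_1$ were chosen non-isomorphic over the base. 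Splitting $S$ into $\aleph_1$ disjoint stationary pieces produces $2^{\aleph_1}$ branches $\eta$ pairwise differing on a stationary set, hence $2^{\aleph_1}$ pairwise non-isomorphic atomic models of size $\aleph_1$, contradicting the hypothesis and completing the contrapositive.

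I expect the engine lemma to be the principal obstacle, on two fronts. First, one must pin down the correct invariant --- which types over a countable atomic set are ``atomic-compatible,'' and when the failure of isolation genuinely forces incompatible extensions --- and verify that non-constructibility of $T$ (a statement about all construction sequences of length up to $|T| = \aleph_1$) localizes to this stage-by-stage picture. Second, the deployment of weak diamond to turn ``branching is sometimes unavoidable'' into ``branching is unavoidable on a stationary set'' requires setting up the guessing against the right space of steering strategies, and checking that the guessed obstructions survive as genuine non-isolated extensions; a careful bookkeeping, reminiscent of the weak-diamond arguments for the Whitehead problem, is what ties the reflection of isomorphisms at branching stages to the stationarity of $S$.
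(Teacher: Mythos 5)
Your forcing model is the right one (it is exactly the paper's $\mathbb{P}_0 \times \mathbb{P}_1$ from Theorem~\ref{consistency}), but the proof plan built on it has a fatal gap in the engine lemma. To make two branches $M_\eta, M_\nu$ non-isomorphic you need, at branching stages, a pair of atomic extensions $B_0, B_1$ of the base that cannot be \emph{amalgamated} into any atomic set; ``non-isomorphic over the base'' buys nothing. Indeed, an isomorphism $f: M_\eta \cong M_\nu$ does not map the successor stage $M_\eta^{\delta+1}$ onto $M_\nu^{\delta+1}$ (a club of $\delta$ controls only limit stages of the filtration), so your step ``a local isomorphism extends to $M_\eta^{\delta+1} \cong M_\nu^{\delta+1}$ only if $\eta(\delta)=\nu(\delta)$'' is unjustified; what $f$ actually gives is that $f(B_{\eta(\delta)})$ and $B_{\nu(\delta)}$ both sit inside $M_\nu$, i.e.\ they are \emph{amalgamated} over the base inside the target model. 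Hence a contradiction can only come from a failure of amalgamation --- and non-constructibility does not supply this. The hard case, which your outline cannot handle, is a theory where $\mathbf{K}_T$ has the club amalgamation property but is not club totally transcendental (so, by Theorem~\ref{reduction}, may have no constructible model): over club many bases \emph{every} pair of extensions amalgamates, and the branching construction collapses. This is precisely why the paper introduces systems of promises: each node carries a countable set of forbidden type-sets $\Gamma(p)$, carved out of a perfect kernel $K(A)$ of $S^{n_0,\beta}_{at}(A)$, so that $\mathbb{P}$-amalgamation (amalgamation respecting the promised omissions) fails stationarily often even though plain amalgamation holds. Building and extending these promise systems requires realizing types lying in $\aleph_1$-comeager subsets of $K(A)$ (Lemmas~\ref{amalg1}, \ref{extend}, \ref{amalg2}), which is exactly where the hypothesis $\mathsf{Cov}(\mathcal{K}) \geq \aleph_2$ is consumed; your proposal has no counterpart to any of this.

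The set-theoretic side is also under-specified in a way that matters. You invoke only $\Phi(\omega_1)$, i.e.\ $2^{\aleph_0} < 2^{\aleph_1}$, but the diagonalization in Lemma~\ref{promises} needs $\Phi(S)$ for the stationary set $S$ of stages where ($\mathbb{P}$-)amalgamation fails --- a set not known in advance --- and needs to split it into $\aleph_1$ disjoint stationary pieces each carrying weak diamond; $\Phi(\omega_1)$ does not give $\Phi(S)$ for arbitrary stationary $S$, which is why the paper assumes $\Phi^*$ (and notes that even for countable languages it is open whether $2^{\aleph_0}<2^{\aleph_1}$ alone suffices in Theorem~\ref{Shelah1}). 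Relatedly, you assign weak diamond the wrong job: stationarity of the branching set is automatic from ``no constructible model'' (the relevant spectrum contains no club, so its complement meets every club in a stationary set); weak diamond is used instead to \emph{predict and defeat potential isomorphisms} between branches, via a coloring $F$ of approximations $(\mathbf{s},\mathbf{t},h)$ as in Case B of the paper's proof --- the paper never proves your stronger claim that $M_\eta \cong M_\nu$ implies $\eta$ and $\nu$ agree on a club, and that claim is not needed once the prediction argument is set up correctly. Your chosen forcing extension does satisfy $\Phi^*$ and $\mathsf{Cov}(\mathcal{K}) \geq \aleph_2$ (that is Theorem~\ref{consistency}), so the consistency target is right, but the argument you sketch from it would not go through.
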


The paper is organized as follows:

In Section~\ref{Background} we explain the various set-theoretic tools we use in the paper, and give sharper statements of the Third Example and of Theorem~\ref{submain}. The First Example is given in Section~\ref{ContinuumExample}, the Second and Third Examples are given in Section~\ref{UniformizationExample} and in Section~\ref{cons} we prove Theorem~\ref{submain}.

The author thanks Chris Laskowski for suggesting these problems, for many helpful discussions and for many helpful comments on the writing of this paper.
%%%%%%%%%%%%%%%%%%%%%%%%%%%%%%%%
\section{Background, and Statement of Results} \label{Background}
%%%%%%%%%%%%%%%%%%%%%%%%%%%%%%%%

We first review the set-theoretic notions required for the consistency proofs. \cite{diamond} serves as a general reference.

\vspace{2 mm}

\subsection{Ladder Systems} Let $\Lambda \subseteq \omega_1$ be the limit ordinals. Suppose $S \subset \Lambda$ is stationary. A \emph{ladder system} $(L_\alpha: \alpha \in S)$ is a sequence of subsets of $\omega_1$ such that for each $\alpha \in S$, $L_\alpha \subset \alpha$ is cofinal and of order type $\omega$. $(L_\alpha: \alpha \in S)$ has the \emph{uniformization property} if for every sequence $(f_\alpha: \alpha \in S)$ of functions $f_\alpha: L_\alpha \to 2$, there is some $f: \omega_1 \to 2$ such that for all $\alpha \in S$, $\{b \in L_\alpha: f_\alpha(\beta) \not= f(\beta)\}$ is finite.

\vspace{2 mm}

We have the following, proven by Devlin-Shelah \cite{diamond2}:

\begin{theorem}
Martin's Axiom, together with $\aleph_1 < 2^{\aleph_0}$, implies that every ladder system on $\Lambda$ has the uniformization property (and hence that every ladder system on any stationary $S$ has the uniformization property.)

\end{theorem}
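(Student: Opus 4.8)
The plan is to derive the uniformization property from Martin's Axiom by forcing: I will build a partial order $P$ whose conditions are finite approximations to a uniformizing function, show that $P$ is ccc, and then apply $MA_{\aleph_1}$ (which follows from $MA$ together with $\aleph_1 < 2^{\aleph_0}$) to an appropriate family of $\aleph_1$ dense sets. First note that the parenthetical clause reduces to the main one: given a ladder system $(L_\alpha : \alpha \in S)$ on a stationary $S \subseteq \Lambda$, extend it arbitrarily to a ladder system on all of $\Lambda$, and given $(f_\alpha : \alpha \in S)$ extend it by $f_\alpha \equiv 0$ off $S$; a uniformizer for the extended system restricts to one for the original. So fix a ladder system $(L_\alpha : \alpha \in \Lambda)$ and target colorings $(f_\alpha : \alpha \in \Lambda)$.

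Define $P$ to consist of pairs $p = (s_p, F_p)$ with $s_p : \omega_1 \to 2$ a finite partial function and $F_p \subseteq \Lambda$ finite, subject to the invariant that $L_\alpha \cap L_{\alpha'} \subseteq \mathrm{dom}(s_p)$ for all distinct $\alpha, \alpha' \in F_p$. Order $P$ by $q \leq p$ iff $s_q \supseteq s_p$, $F_q \supseteq F_p$, and $s_q(\beta) = f_\alpha(\beta)$ for every $\alpha \in F_p$ and every $\beta \in (\mathrm{dom}(s_q) \setminus \mathrm{dom}(s_p)) \cap L_\alpha$; that is, once $\alpha$ is committed no new disagreement with $f_\alpha$ on $L_\alpha$ may be introduced. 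The structural fact driving everything is that distinct ladders meet in a finite set (two $\omega$-sequences, one bounded below the other's sup). Consider the dense sets $D_\gamma = \{p : \gamma \in \mathrm{dom}(s_p)\}$ for $\gamma < \omega_1$ and $E_\alpha = \{p : \alpha \in F_p\}$ for $\alpha \in \Lambda$, numbering $\aleph_1$ in total. Density of $E_\alpha$ is immediate (commit $\alpha$ after adjoining to the domain the finitely many overlap points it creates with already-committed ladders); density of $D_\gamma$ uses the invariant, which guarantees that a point not yet in the domain lies on at most one committed ladder and so can be colored without creating a new disagreement. A filter $G$ meeting all $D_\gamma$ and $E_\alpha$ then yields a total $f = \bigcup_{p \in G} s_p$; for each $\alpha$, fixing $p \in G \cap E_\alpha$, the ordering forces $f$ to introduce no disagreement with $f_\alpha$ beyond those already present in the finite set $\mathrm{dom}(s_p) \cap L_\alpha$, so $\{\beta \in L_\alpha : f(\beta) \neq f_\alpha(\beta)\}$ is finite, as required.

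Everything therefore reduces to showing $P$ is ccc, which is the technical heart. Given an uncountable $\{p_i : i < \omega_1\} \subseteq P$, I would refine to an uncountable subfamily along which: the domains form a $\Delta$-system with root $R$ and $s_{p_i} \restriction R$ is constant; the committed sets $F_{p_i}$ form a $\Delta$-system with root $F^*$ of constant size; and, writing $N_i$ for the union of the non-root parts $\mathrm{dom}(s_{p_i}) \setminus R$ and $F_{p_i} \setminus F^*$, there is a countable $\delta^*$ with $R \cup F^* \subseteq \delta^*$, $\min(N_i) \geq \delta^*$, and the blocks increasing ($\sup N_i < \min N_j$ for $i < j$). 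For such $i < j$ the common extension $(s_{p_i} \cup s_{p_j},\, F_{p_i} \cup F_{p_j})$, suitably completed on its finitely many missing overlap points, is a function (the domains agree on $R$), and the block arrangement kills most ``new disagreement'' obstructions: every ladder of a root- or block-$i$ index lies below $\min(N_j)$, so no point of $\mathrm{dom}(s_{p_j}) \setminus R$ can land on it. A counting step handles the reverse direction, since $\bigcup_{\alpha \in F_{p_i} \setminus F^*} L_\alpha$ is countable and the domains are almost disjoint, so only countably many $j$ place a fresh domain point on a block-$i$ ladder.

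The main obstacle is the one configuration the block structure does not dispose of: a point $\gamma \in L_\alpha \cap L_{\alpha'}$ with $\alpha \in F_{p_i} \setminus F^*$, $\alpha' \in F_{p_j} \setminus F^*$, not yet in either domain, at which the fixed data disagree, $f_\alpha(\gamma) \neq f_{\alpha'}(\gamma)$; such a $\gamma$ cannot be colored without creating a new disagreement for one of the two committed ladders, making $p_i$ and $p_j$ incompatible. Controlling these requires a finer pigeonhole: each ladder meets $\delta^*$ in a finite set and distinct ladders meet finitely, so the ``low'' parts of the block ladders together with the local values of the $f_\alpha$ on them range over only countably many patterns; refining the uncountable family to fix this pattern forces every potential conflict point for the chosen pair to lie already in $R$, hence to be decided once and consistently rather than remaining free and conflicting. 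This pigeonhole, exploiting both the order-type-$\omega$ finiteness of ladder intersections and the abundance of indices, is the crux of the Devlin--Shelah argument; once $P$ is known to be ccc, $MA_{\aleph_1}$ supplies the filter $G$ and the proof concludes as above.
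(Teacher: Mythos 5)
The paper itself gives no proof of this theorem — it is quoted from Devlin–Shelah — so your proposal must be measured against the standard forcing argument, whose architecture you have correctly reproduced: the poset of finite partial uniformizers with finitely many committed ladders, the overlap invariant $L_\alpha \cap L_{\alpha'} \subseteq \mathrm{dom}(s_p)$ (which is exactly what makes the sets $D_\gamma$ and $E_\alpha$ dense), the reduction from a stationary $S$ to $\Lambda$, and the extraction of the uniformizer from a filter meeting $\aleph_1$ dense sets are all correct. The gaps are inside the ccc proof, which you rightly call the heart. The first is your ``counting step'': after arranging increasing blocks, a fresh point of $s_{p_j}$ cannot lie on a block-$i$ or root ladder, since those ladders are subsets of ordinals below $\min(N_j)$ — but that is what your preceding sentence already established, and your counting argument (``the domains are almost disjoint, so only countably many $j$ place a fresh domain point on a block-$i$ ladder'') proves the same direction again. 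The direction that actually remains open is a fresh point $\gamma \in \mathrm{dom}(s_{p_i}) \setminus R$ lying on $L_{\alpha'}$ for some $\alpha' \in F_{p_j} \setminus F^*$ with $s_{p_i}(\gamma) \neq f_{\alpha'}(\gamma)$, and almost-disjointness of domains does not touch it: a single fixed point can lie on uncountably many ladders, so for fixed $i$ uncountably many $j$ may be bad. The correct argument uses order type $\omega$: for each $j$ the set $E_j := \bigcup_{\alpha' \in F_{p_j} \setminus F^*} \bigl(L_{\alpha'} \cap \sup(\bigcup_{i<j} N_i)\bigr)$ is finite (a ladder meets any ordinal below its supremum in a finite set), every such conflict point lies in $E_j \cap (\mathrm{dom}(s_{p_i}) \setminus R)$, and the sets $\mathrm{dom}(s_{p_i}) \setminus R$ are pairwise disjoint, so each $j$ conflicts in this way with only finitely many $i < j$.

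The second gap is more serious, because it is the crux you isolate: a crossing point $\gamma \in L_\alpha \cap L_{\alpha'}$ with $\alpha \in F_{p_i} \setminus F^*$, $\alpha' \in F_{p_j} \setminus F^*$ satisfies only $\gamma < \alpha \leq \sup N_i$; it need not lie below $\delta^*$. Since the suprema of the blocks are cofinal in $\omega_1$, the relevant low parts $L_{\alpha'} \cap \sup(\bigcup_{i<j} N_i)$ are finite sets which are unbounded in $\omega_1$ as $j$ varies, so they do \emph{not} range over countably many patterns, and fixing the pattern of the ladders below $\delta^*$ kills nothing above $\delta^*$. What is missing is a pressing-down step: re-index the refined family by $\omega_1$, observe that on a club of limit indices the map sending $j$ to the least earlier index whose block bounds the finite set $E_j$ is regressive, and apply Fodor's lemma to confine all the $E_j$ inside a single fixed countable ordinal $\lambda^*$; only then does the pigeonhole on the (now countably many) possible patterns of points, $f$-values, and domain membership become available. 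Granting that, your stated conclusion is right, but for a reason you never give and which uses the invariant essentially: once the pattern is constant, a disagreement between a block-$i$ ladder and a block-$j$ ladder at an undomained $\gamma$ forces block $i$ alone to have two committed ladders through $\gamma$ with different $f$-values, whence the invariant puts $\gamma$ into $\mathrm{dom}(s_{p_i})$ — contradiction — so all surviving conflict points lie in $R$, where the constant root values settle them. As written, then, the proposal is the right proof with its two hardest steps asserted rather than carried out, and one of them asserted via a false counting principle.
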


In particular $ZFC + \aleph_1 < 2^{\aleph_0} + $ ``every ladder system on $\Lambda$ has the uniformiation property" is equiconsistent with $ZFC$.

The uniformization property was originally introduced to analyze Whitehead groups. Namely, Shelah showed in \cite{WH2} that there is a non-free Whitehead group of size $\aleph_1$ if and only if for some stationary $S \subset \Lambda$, some ladder system on $S$ has the uniformization property.

We sharpen the Third Example as follows:

\vspace{2 mm}

\noindent \textbf{Third Example, Sharp Version.} Suppose there is some stationary $S \subseteq \omega_1$ that admits a ladder system. Then there is a theory $T$ in a language of size $\aleph_1$ such that $T$ has a unique atomic model, which is furthermore prime, but which is not constructible.
\vspace{2 mm}

\subsection{The Weak Diamond Principle}  

If $S \subseteq \omega_1$ is stationary, then $\Phi(S)$ (``weak diamond on $S$") is the combinatorial guessing-principle which states that for every $F: 2^{<\omega_1} \to 2$, there is some $g: S \to 2$ such that for every $f: \omega_1 \to 2$, the set $\{\alpha \in S: F(f \restriction_\alpha) = g(\alpha)\}$ is stationary. So the smaller $S$ is, the stronger $\Phi(S)$ is; $\Phi(\omega_1)$ is equivalent to $2^{\aleph_0} < 2^{\aleph_1}$. 

\begin{definition}
Let $\Phi^*$ abbreviate: for all stationary $S \subset \omega_1$, $\Phi(S)$ holds.
\end{definition}

It is easy to show that, for example, $\Phi(S)$ holds if and only if for every $F: (2 \times 2 \times \omega_1)^{<\omega_1} \to 2$, there is some $g: S \to 2$ such that for every $f_0, f_1: \omega_1 \to 2$ and for every $h: \omega_1 \to \omega_1$, the set $\{\alpha \in S: F(f_0 \restriction_\alpha, f_1 \restriction_\alpha, h \restriction_\alpha) = g(\alpha)\}$ is stationary.

These principles were introduced by Devlin and Shelah \cite{diamond2}, where they proved the following theorems:

\begin{theorem} \label{split}
\begin{enumerate}
\item $\Phi(\omega_1)$ is equivalent to $2^{\aleph_0} < 2^{\aleph_1}$. 
\item  Suppose $\Phi(S)$ holds. Then we can write $S$ as the disjoint union of stationary sets $(S_\alpha: \alpha < \omega_1)$ such that $\Phi(S_\alpha)$ holds for each $\alpha$.
\item Suppose $S \subseteq \Lambda$ is stationary. If $\Phi(S)$ holds then no ladder system on $S$ has the uniformization property.
\end{enumerate}
\end{theorem}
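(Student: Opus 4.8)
The plan is to treat the three parts separately, using only the definitions of $\Phi(S)$ and of the uniformization property.

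\emph{Part (1).} One inequality is free: $2^{\aleph_0}\le 2^{\aleph_1}$ always, so it suffices to derive a contradiction from $\Phi(\omega_1)$ together with $2^{\aleph_0}=2^{\aleph_1}$. Fixing a bijection $b\colon 2^\omega\to 2^{\omega_1}$, I would define $F\colon 2^{<\omega_1}\to 2$ by reading the first $\omega$ bits of its argument: for $\sigma$ of limit length $\alpha\ge\omega$ set $F(\sigma)=1-b(\sigma\restriction\omega)(\alpha)$, and $F(\sigma)=0$ otherwise. Given any candidate $g\colon\omega_1\to 2$, let $r=b^{-1}(g)$ and let $f$ extend $r$ (so $f\restriction\omega=r$); then $b((f\restriction\alpha)\restriction\omega)=g$, so $F(f\restriction\alpha)=1-g(\alpha)\ne g(\alpha)$ for every limit $\alpha\ge\omega$, and the agreement set is disjoint from a club, so $g$ fails. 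Hence $\Phi(\omega_1)$ forces $2^{\aleph_0}<2^{\aleph_1}$. The reverse implication is the deep Devlin--Shelah direction and is the principal obstacle: assuming $\Phi(\omega_1)$ fails with witness $F$, and choosing for each $g$ a function $f_g$ and a club on which $F(f_g\restriction\alpha)=1-g(\alpha)$, one builds from $F$ an injection of $2^{\omega_1}$ into $2^\omega$ — equivalently shows $2^{\aleph_1}\le 2^{\aleph_0}$ — by a transfinite recursion in which the failure of guessing allows an arbitrary subset of $\omega_1$ to be decoded from countably much data. I would import this counting argument essentially as a black box.

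\emph{Part (3).} This is the cleanest and is best proved contrapositively: if some ladder system $(L_\alpha:\alpha\in S)$ on $S\subseteq\Lambda$ has the uniformization property, then $\Phi(S)$ fails. Writing $L_\alpha=\{\ell^\alpha_n:n<\omega\}$ increasingly, I define one fixed $F\colon 2^{<\omega_1}\to 2$ by: for $\sigma$ of length $\alpha\in S$, let $F(\sigma)$ be the eventual value of $\langle\sigma(\ell^\alpha_n):n<\omega\rangle$ if this sequence is eventually constant, and $0$ otherwise (and $F(\sigma)=0$ when $\mathrm{dom}(\sigma)\notin S$). Given any $g\colon S\to 2$, apply uniformization to the family $f_\alpha\equiv 1-g(\alpha)$ on $L_\alpha$: the resulting $f\colon\omega_1\to 2$ agrees with $1-g(\alpha)$ on all but finitely many points of $L_\alpha$, so the sequence along $L_\alpha$ is eventually $1-g(\alpha)$ and $F(f\restriction\alpha)=1-g(\alpha)\ne g(\alpha)$ for every $\alpha\in S$. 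Thus the agreement set is empty, $g$ does not witness $\Phi(S)$, and since $g$ was arbitrary $\Phi(S)$ fails. The only thing to verify is that $F$ is well defined independently of $g$, which it is. This is exactly the mechanism the rest of the paper must circumvent.

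\emph{Part (2).} I would reduce the splitting of $S$ into $\omega_1$ stationary $\Phi$-pieces to the single step that any stationary $S$ with $\Phi(S)$ splits into two stationary sets both carrying $\Phi$. The naive idea — take an arbitrary Solovay partition $S=A\sqcup B$ and hope one side is $\Phi$ — fails, because a defeating $f$ for $A$ and one for $B$ cannot be amalgamated into a single $f$ on $S$, so the partition must be produced in tandem with the guessing function. I would therefore invoke the coded form of $\Phi(S)$ (guessing against triples $(f_0,f_1,h)$, as noted in the excerpt) to diagonalize simultaneously against all candidate failure-witnesses while defining the two-way coloring. Bootstrapping to $\omega_1$ pieces is then a transfinite recursion, whose remaining difficulty lies at limit stages, where the residual part of $S$ must stay stationary and $\Phi$; I would guard against collapse by fixing in advance a ZFC Solovay partition of $S$ into $\omega_1$ stationary blocks and using the splits only to reassign whole blocks, so that no decreasing intersection of remainders becomes nonstationary. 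The two-piece crux and this limit bookkeeping are the obstacles here.
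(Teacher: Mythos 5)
Your parts (1) and (3) are fine: part (3) is the complete standard argument, and in part (1) you prove the easy direction and openly black-box the Devlin--Shelah converse, which is consistent with the paper itself, since the paper offers no proof of this theorem at all and simply cites Devlin--Shelah. The genuine gap is in part (2), and it starts with a claim that is backwards: you assert that a defeating $f$ for $A$ and a defeating $f$ for $B$ ``cannot be amalgamated into a single $f$ on $S$,'' but the coded form of $\Phi(S)$ that the paper records (oracles reading tuples $(f_0\restriction_\alpha, f_1\restriction_\alpha, h\restriction_\alpha)$) exists precisely to amalgamate them: if $F_A$, $F_B$ witness failure of $\Phi(A)$, $\Phi(B)$, define $F(\sigma_0,\sigma_1,h)=F_A(\sigma_0)$ when the length lies in $A$ and $=F_B(\sigma_1)$ otherwise; given $g$, the pair of defeating functions defeats $g$ on $S$, because the union of two nonstationary agreement sets is nonstationary. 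So at least one side of any partition automatically carries $\Phi$. What your sketch never establishes is the step you reduce everything to, a partition with \emph{both} sides carrying $\Phi$: ``diagonalize simultaneously against all candidate failure-witnesses'' cannot be executed, since one application of $\Phi(S)$ guesses against a single fixed oracle, while the candidate witnesses $F$ for the two unknown pieces range over a collection of size at least $2^{2^{\aleph_0}}$; they cannot all be folded into one oracle. Separately, even granting that lemma, your transfinite iteration is not repaired by the Solovay-block device: at limit stages what must survive is not stationarity of the remainder but the property $\Phi$ of the remainder, and $\Phi$ is not preserved under decreasing intersections of length $\omega_1$ (a decreasing chain of sets each carrying $\Phi$ can even have empty intersection); reassigning whole stationary blocks controls stationarity only.

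The way to close the gap --- and essentially the Devlin--Shelah route --- is to drop the iteration and let the amalgamation observation do all the work. Show that $\mathcal{I}=\{A\subseteq S:\Phi(A)\mbox{ fails}\}$ is a $\sigma$-complete proper ideal containing every nonstationary subset of $S$: countably many failure witnesses $F_n$ merge into one oracle reading a coded $\omega$-tuple of functions (decoding works on a club), and a countable union of nonstationary agreement sets is nonstationary. Then run the Ulam matrix argument: fix injections $e_\beta:\beta\to\omega$ and set $A_{n,\alpha}=\{\beta>\alpha: e_\beta(\alpha)=n\}$; for each $\alpha$, since $S\setminus\bigcup_n A_{n,\alpha}$ is countable, $\sigma$-completeness gives some $n(\alpha)$ with $A_{n(\alpha),\alpha}\cap S\notin\mathcal{I}$, and a pigeonhole on $n$ yields $\omega_1$ pairwise disjoint $\mathcal{I}$-positive (hence $\Phi$, hence stationary) subsets of $S$. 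Absorb the leftover of $S$ into one of these pieces, using that $\Phi$ passes from a set to any superset. This produces the full partition in one stroke, with no limit stages; your two-piece lemma also falls out, since if every partition of $S$ had one side in $\mathcal{I}$, the dual filter would be a nonprincipal $\sigma$-complete ultrafilter on $\omega_1$, which the same Ulam matrix rules out.
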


In view of the first item, $\Phi^*$ is a strengthening of $2^{\aleph_0} < 2^{\aleph_1}$.

\subsection{The Covering Number} Let $\mathsf{Cov}(\mathcal{K})$ be the covering number of the $\sigma$-ideal of meager sets: i.e. the least $\kappa$ such that $2^{\omega}$ is the union of $\kappa$-many closed nowhere dense sets. This is a well-understood cardinal invariant of the continuum. In particular $\omega < \mathsf{Cov}(\mathcal{K}) \leq 2^{\aleph_0}$, and if Martin's Axiom holds then $\mathsf{Cov}(\mathcal{K}) = 2^{\aleph_0}$.

So $\Phi^* \land \mathsf{Cov}(\mathcal{K}) \geq \aleph_2$ says that $\aleph_1 < 2^{\aleph_0} < 2^{\aleph_1}$ in a strong way. This assertion is consistent: let $\mathbb{P}$ be the forcing notion $\mathbb{P}_0 \times \mathbb{P}_1$ where $\mathbb{P}_0 = \mbox{Fn}(\omega_2, 2, \omega)$ and $\mathbb{P}_1 = \mbox{Fn}(\omega_3, 2, \omega_1)$. (Here $\mbox{Fn}(X, Y, \kappa)$ is the set of all partial functions $f$ with domain $\subseteq X$ and range $\subseteq Y$, and with $|f| < \kappa$.) $\mathbb{P}$ is the standard forcing notion for arranging $2^{\aleph_0} = \aleph_2$, $2^{\aleph_1} = \aleph_3$, starting from GCH. Then we have:

\begin{theorem}
\label{consistency}
Suppose $\mathbb{V} \models GCH$ and $G$ is $\mathbb{P}$-generic over $\mathbb{V}$. Then $\mathbb{V}[G] \models \Phi^* \land \mathsf{Cov}(\mathcal{K}) \geq \aleph_2$.
\end{theorem}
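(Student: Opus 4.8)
The plan is to treat $\mathbb{P}=\mathbb{P}_0\times\mathbb{P}_1$ one factor at a time, using that a finite product may be forced as an iteration in either order. First I would record the relevant chain conditions. Since $\mathbb{V}\models GCH$, a $\Delta$-system argument shows that $\mathbb{P}_1=\mbox{Fn}(\omega_3,2,\omega_1)$ is $\aleph_2$-c.c.: any $\aleph_2$ countable conditions contain an $\aleph_2$-sized $\Delta$-subsystem with countable root, on which there are only $2^{\aleph_0}=\aleph_1$ possible values, so two of the conditions agree on the root and are compatible. Moreover $\mathbb{P}_1$ is $\sigma$-closed, hence adds no new reals (indeed no new $\omega$-sequences). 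The factor $\mathbb{P}_0=\mbox{Fn}(\omega_2,2,\omega)$ has finite conditions and is c.c.c. Forcing with $\mathbb{P}_1$ first preserves CH and makes $2^{\aleph_1}=\aleph_3$; then forcing with the (still c.c.c.) $\mathbb{P}_0$ adds $\aleph_2$ Cohen reals, giving $2^{\aleph_0}=\aleph_2$, while a nice-name count keeps $2^{\aleph_1}=\aleph_3$. Since a c.c.c.\ forcing and a forcing that is both $\sigma$-closed and $\aleph_2$-c.c.\ each preserve all cardinals and cofinalities, in $\mathbb{V}[G]$ we obtain $\aleph_1<2^{\aleph_0}=\aleph_2<2^{\aleph_1}=\aleph_3$; in particular $2^{\aleph_0}<2^{\aleph_1}$.

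To get $\Phi^*$ I would apply the forward direction of the Devlin--Shelah analysis underlying Theorem~\ref{split} inside $\mathbb{V}[G]$. Although Theorem~\ref{split}(1) is stated only for $\omega_1$, the Devlin--Shelah proof that $2^{\aleph_0}<2^{\aleph_1}$ yields the weak diamond uses nothing about the index set beyond its stationarity, and so produces $\Phi(S)$ for an arbitrary stationary $S\subseteq\omega_1$. Hence $2^{\aleph_0}<2^{\aleph_1}$ in $\mathbb{V}[G]$ delivers $\Phi(S)$ for every stationary $S$, which is exactly $\Phi^*$. I would take care to apply this in the final model $\mathbb{V}[G]$ rather than attempting to preserve weak diamond through the Cohen extension: the weak diamond is a consequence of the cardinal inequality, and the Cohen reals, while they destroy $\diamondsuit$ (which entails $CH$), leave $2^{\aleph_0}<2^{\aleph_1}$ intact.

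For $\mathsf{Cov}(\mathcal{K})\ge\aleph_2$, suppose for contradiction that some family of $\aleph_1$ closed nowhere dense sets covers $2^\omega$ in $\mathbb{V}[G]$. Viewing $\mathbb{V}[G]=\mathbb{V}[G_1][G_0]$ and using that $\mathbb{P}_1$ adds no reals, each member of the family is coded by a real of $\mathbb{V}[G_1][G_0]$, and every real there arises from the Cohen forcing $\mathbb{P}_0$ over $\mathbb{V}[G_1]$. By c.c.c.\ the $\aleph_1$ codes are captured in $\mathbb{V}[G_1][G_0\restriction A]$ for some $A\subseteq\omega_2$ with $|A|=\aleph_1$. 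Picking any coordinate $\beta\in\omega_2\setminus A$, the corresponding Cohen real is generic over $\mathbb{V}[G_1][G_0\restriction A]$ and therefore lies in no meager set coded there, contradicting that the family covers $2^\omega$. Thus $\aleph_1$ meager sets never cover, and $\mathsf{Cov}(\mathcal{K})\ge\aleph_2$.

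The main obstacle is the bookkeeping that makes the two factors cooperate: one must check that $\mathbb{P}_1$ contributes no reals even alongside the Cohen reals (secured by iterating $\mathbb{P}_1$ first and invoking $\sigma$-closure) and that the chain conditions of the two factors jointly preserve all cardinals, so that $2^{\aleph_0}<2^{\aleph_1}$ really is a statement about $\aleph_2$ and $\aleph_3$. Granting this, $\Phi^*$ is soft, flowing from the cardinal inequality via Devlin--Shelah; the only genuinely combinatorial point, should one wish to avoid citing the general form, is verifying that their argument localizes to an arbitrary stationary $S$.
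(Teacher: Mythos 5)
Your treatment of $\mathsf{Cov}(\mathcal{K})\geq\aleph_2$ is correct and is essentially the paper's own argument (split the Cohen coordinates into a part of size $\aleph_1$ capturing the codes and a remainder supplying a Cohen real over that intermediate model), and your cardinal-arithmetic bookkeeping for the product is fine. The problem is the $\Phi^*$ half. Your key claim --- that the Devlin--Shelah proof of $2^{\aleph_0}<2^{\aleph_1}\Rightarrow\Phi(\omega_1)$ ``uses nothing about the index set beyond its stationarity'' and therefore yields $\Phi(S)$ for every stationary $S$ --- is false. The equivalence $\Phi(S)\iff 2^{\aleph_0}<2^{\aleph_1}$ holds only when $\omega_1\setminus S$ is nonstationary; for stationary, co-stationary $S$ the principle $\Phi(S)$ is strictly stronger than the cardinal inequality. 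A concrete counterexample is already implicit in the paper's bibliography: Shelah (cited as the Whitehead-group paper) produces a model of $ZFC+GCH$ in which some ladder system on a stationary $S\subset\Lambda$ has the uniformization property; by Theorem~\ref{split}(3), $\Phi(S)$ fails in that model, even though $GCH$ gives $2^{\aleph_0}<2^{\aleph_1}$. So no soft derivation of $\Phi^*$ from $2^{\aleph_0}<2^{\aleph_1}$ can exist, and this is precisely why the paper calls $\Phi^*$ a \emph{strengthening} of $2^{\aleph_0}<2^{\aleph_1}$.

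Consequently the step you flagged as ``the only genuinely combinatorial point'' is not a point one can wave through; it is the entire content of the $\Phi^*$ claim. The paper handles it by citing a forcing-specific result (Theorem 2.11 of the appendix of Shelah's \emph{Proper and Improper Forcing}): for this particular $\mathbb{P}=\mbox{Fn}(\omega_2,2,\omega)\times\mbox{Fn}(\omega_3,2,\omega_1)$ over a model of $GCH$, the extension satisfies $\Phi(S)$ for \emph{every} stationary $S$. That argument exploits the homogeneity and product structure of the forcing itself, not merely the resulting values of $2^{\aleph_0}$ and $2^{\aleph_1}$. To repair your proof you would need to either reproduce such a forcing-specific argument or cite this theorem; as written, the $\Phi^*$ portion has a genuine gap.
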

\begin{proof}
It is shown in \cite{forcing} (Theorem 2.11 from the appendix) that $\mathbb{V}[G] \models \Phi^*$. 

Note that these forcing notions all preserve cardinals, so we can refer to $\omega_1$, etc., without ambiguity.

Let $G_1$ be $\mathbb{P}_1$-generic over $\mathbb{V}$. Working in $\mathbb{V}[G_1]$, we show that if $G_0$ is $\mathbb{P}_0$-generic over $\mathbb{V}[G_1]$ then $\mathbb{V}[G_1][G_0] \models \mathsf{Cov}(\mathcal{K}) \geq \aleph_2$. 

Indeed, suppose $(C_\alpha: \alpha < \omega_1)$ is a sequence in $\mathbb{V}[G_1][G_0]$ of closed nowhere dense subsets of $(\omega^\omega)^{\mathbb{V}[G_1][G_0]}$. Let $x_\alpha \in (\omega^\omega)^{\mathbb{V}[G_1][G_0]}$ encode $C_\alpha$.

In $\mathbb{V}[G_1]$, write $\omega_2 = I \cup J$ where $I, J$ are disjoint, $|I| \leq \aleph_1$, and such that setting $H_0 = G_0 \restriction_I$, $H_1 = G_0 \restriction_J$, we have that $(x_\alpha: \alpha < \omega_1) \subset \mathbb{V}[G_1][H_0]$. Choose a real $x \in \mathbb{V}[G_1][G_0] = \mathbb{V}[G_1][H_0][H_1]$ such that $x$ is Cohen over $\mathbb{V}[G_1][H_0]$; then $x \not \in C_\alpha$ for all $\alpha < \omega_1$, showing that $\bigcup_{\alpha < \omega_1} C_\alpha \not= (\omega^\omega)^{\mathbb{V}[G_0][G_1]}$.
\end{proof}

\subsection{The Main Theorem}

For $T$ a complete theory in a countable language, the question of the number of atomic models of $T$ of size $\aleph_1$ has been closely investigated. First of all, such models exist if and only if $T$ has a (unique) countable atomic model, which furthermore has a proper atomic extension. Assuming this, let $\mathbf{K}_T$ be the class of atomic models of $T$.

Now say that $\mathbf{K}_T$ is $\omega$-\emph{stable} if $S^n_{at}(M)$ is countable for all $n$, where $M$ is some countable atomic model of $T$, and $S^n_{at}(M)$ is the set of all $n$-types $p(\overline{x}) \in S^n(M)$ such that $M \overline{a}$ is atomic whenever $\overline{a}$ realizes $p(\overline{x})$.

Then we have the following theorems of Shelah \cite{ShelahClass1} \cite{ShelahClass2} (or see \cite{cat} for an exposition):

\begin{theorem} \label{Shelah1}
Suppose $2^{\aleph_0} < 2^{\aleph_1}$, and $\mathbf{K}_T$ is not $\omega$-stable. Then $T$ has $2^{\aleph_1}$ nonisomorphic models of size $\aleph_1$.
\end{theorem}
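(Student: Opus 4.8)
The plan is to prove the statement directly as a non-structure result, powered by the weak diamond principle $\Phi(\omega_1)$, which by Theorem~\ref{split}(1) is exactly the hypothesis $2^{\aleph_0}<2^{\aleph_1}$; I would use it in the coded form recorded in the discussion of $\Phi(S)$ above, taking as input a pair of potential branches $f_0,f_1:\omega_1\to 2$ together with a potential isomorphism coded as $h:\omega_1\to\omega_1$. The first task is to turn the failure of $\omega$-stability into a reusable branching gadget. Fix a countable atomic $M$ and an $n$ with $S^n_{at}(M)$ uncountable. Calling a formula $\phi(\overline{x})$ over $M$ that is consistent with atomicity \emph{large} if uncountably many members of $S^n_{at}(M)$ contain it, I would run a splitting recursion: a large formula must split, i.e. admit $\theta$ with $\phi\wedge\theta$ and $\phi\wedge\neg\theta$ both large, since otherwise all but countably many atomic types through $\phi$ collapse to a single type. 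Iterating along $2^{<\omega}$ produces a perfect tree $(\phi_s:s\in 2^{<\omega})$ of large, atomic-consistent formulas with $\phi_{s^\frown i}\vdash\phi_s$ and $\phi_{s^\frown 0},\phi_{s^\frown 1}$ contradictory, each node carrying a genuine atomic type; the one point needing care is that $S^n_{at}(M)$ need not be closed in $S^n(M)$, so atomicity must be maintained node by node during the recursion rather than obtained from an abstract Cantor--Bendixson analysis of a closed set.

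Next I would build a tree of countable atomic models $(N_s:s\in 2^{<\omega_1})$, increasing and continuous along branches, with $N_{\langle\rangle}\supseteq M$ fixed, and with $N_{s^\frown i}$ a countable atomic extension of $N_s$ realizing the branch-$i$ type from the gadget over a designated copy of $M$ inside $N_s$. Two standard closure facts for the atomic class make this legitimate: every atomic type over a countable atomic model is realized in some countable atomic extension, and a continuous increasing union of countable atomic models is again atomic, since any tuple together with its isolating formula already appears at a bounded stage. For $\eta\in 2^{\omega_1}$ set $M_\eta=\bigcup_{\alpha<\omega_1}N_{\eta\restriction\alpha}$; each $M_\eta$ is an atomic model of size $\aleph_1$, and $M_{s^\frown 0}$, $M_{s^\frown 1}$ realize contradictory atomic types over the same finite parameters, so the two immediate successors of any node are genuinely incompatible.

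The core step is to extract $2^{\aleph_1}$ pairwise nonisomorphic models from this tree via weak diamond. Laying the universes out on $\omega_1$, any isomorphism $h:M_\eta\to M_{\eta'}$ agrees with isomorphisms $N_{\eta\restriction\alpha}\to N_{\eta'\restriction\alpha}$ on a club of $\alpha$; the difficulty is that a fixed pair $\eta,\eta'$ splits at only one ordinal, which need not lie in that club, so the node-incompatibility of Step~2 cannot be invoked against $h$ directly. I would resolve this in the Devlin--Shelah manner: define $F$ on $(2\times 2\times\omega_1)^{<\omega_1}$ so that, reading an initial segment as (a branch, a rival branch, a guessed partial isomorphism) truncated at $\alpha$, $F$ returns the branching bit at $\alpha$ that obstructs the guessed isomorphism from extending past level $\alpha$. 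Feeding $F$ into the coded weak diamond yields $g:\omega_1\to 2$ whose prediction is correct on a stationary set against \emph{every} such triple $(f_0,f_1,h)$. Steering the construction (and the selection of admissible branches) by $g$ then leaves $2^{\aleph_1}$ surviving branches while forcing any candidate isomorphism between two of their models to be broken at stationarily many stages; hence the corresponding atomic models are pairwise nonisomorphic.

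I expect the genuine obstacle to be precisely this last step. Two things must be arranged with care: first, that ``the branching bit at $\alpha$ that kills the guessed isomorphism'' is actually well-defined and effective from the data available at stage $\alpha$, which forces a simultaneous induction in which $F$ and the tree are defined together; and second, that stationarily-often-correct guessing genuinely rules out a full isomorphism $M_\eta\cong M_{\eta'}$ rather than merely defeating boundedly many finite approximations, while still leaving a family of branches of full size $2^{\aleph_1}$. Transferring the single splitting point of a pair of branches to a stationary set of usable stages through $g$ is the entire content of the weak-diamond non-structure technique; by contrast the perfect-tree extraction of Step~1 and the atomicity of unions in Step~2 are routine, as is the observation that $\Phi(\omega_1)$ alone (not the full $\Phi^*$) is what the hypothesis supplies.
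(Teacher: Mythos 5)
Your Steps 1 and 2 are fine (and, as you say, routine), but the proposal founders exactly where you yourself predict trouble, and the missing ingredient is not "care" in defining $F$: it is the amalgamation dichotomy, which your outline never engages. Note first that non-$\omega$-stability of $\mathbf{K}_T$ is perfectly consistent with $\mathbf{K}_T$ having the amalgamation property at $\aleph_0$ (example: $T = $ DLO, where by quantifier elimination every model is atomic, so $S^1_{at}(M) = S^1(M)$ has size continuum, while countable models amalgamate freely over a common elementary submodel). For such a $T$ there is simply no ``branching bit at $\alpha$ that obstructs the guessed isomorphism'': whichever bit is chosen, the two successor models $N_{s^\frown 0}$, $N_{s^\frown 1}$ embed over $N_s$ into a common countable atomic model, so a guessed partial isomorphism $h\restriction_\alpha$ always survives to later levels, and a stationarily-often-correct guess yields no contradiction at all. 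Your claim that the two immediate successors are ``genuinely incompatible'' because they realize contradictory formulas $\phi_0, \phi_1$ over common parameters is false in the relevant sense: a single model can realize both formulas (by different tuples), and nothing in your construction prevents $M_\eta$ from realizing, at a later stage, the type you intended only $M_{\eta'}$ to realize. Weak diamond can only defeat an isomorphism if each correct guess can be converted into an amalgamation problem that provably has \emph{no} solution; with plain atomic models and AP holding, no such problem exists.

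This is precisely why the proof the paper points to (Shelah, as exposed in \cite{cat}, and generalized in Section~\ref{cons} of this paper) splits into two cases. If $\mathbf{K}_T$ fails AP at $\aleph_0$, unsolvable amalgamation problems exist and the Devlin--Shelah argument (Lemma~\ref{promises} with $\mathbb{P} = \emptyset$) runs essentially as you envision. If AP holds but $\mathbf{K}_T$ is not $\omega$-stable, one must change category before the weak diamond can bite: one works with pairs $(A, \Phi)$ where $\Phi$ is a countable set of ``promises'' --- types drawn from the perfect set your Step 1 produces, together with everything derived from them, which all future extensions are required to omit --- and one uses non-$\omega$-stability to prove that amalgamation \emph{fails} for these enriched objects (the analogues of Lemmas~\ref{amalg1}, \ref{extend} and \ref{amalg2}: comeagerly many kernel types can be realized while keeping all existing promises, and realizing such a type on one side while promising to omit it on the other is an unsolvable problem). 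Only then is the guessing function $F$ defined, from the completed tree, and the weak diamond applied. This omitting-types machinery --- not the perfect tree, not the tree of models, and not the guessing --- is the actual content of the theorem, and it is absent from your proposal.
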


It is not known if the assumption $2^{\aleph_0} < 2^{\aleph_1}$ is necessary here. On the other hand, if $\mathbf{K}_T$ is $\omega$-stable, then we have a strong enough structure theory to determine e.g. when $\mathbf{K}_T$ is $\aleph_1$-categorical.

Now, our main theorem (Theorem~\ref{main} below) will be essentially a generalization of Theorem~\ref{Shelah1}, and will follow the same general proof outline, which we now describe.

Namely, the proof of Theorem~\ref{Shelah1} splits into cases depending on whether $\mathbf{K}_T$ has the amalgation property at $\aleph_0$. Here, an amalgamation problem at $\aleph_0$ (for $\mathbf{K}_T$) is a triple $(M_0, M_1, M_2)$ where each $M_i$ is a countable atomic model of $T$, and $M_0 \preceq M_i$ for $i = 1, 2$. A solution to the amalgamation problem is a triple $(M_3, f_1, f_2)$ where $M_3$ is a countable atomic model of $T$, and $f_i: M_i \preceq M_3$, and $f_1 \restriction_{M_0} = f_2 \restriction_{M_0}$. $\mathbf{K}_T$ has the amalgamation property at $\aleph_0$ if every amalgamation problem at $\aleph_0$ has a solution.

So to prove Theorem~\ref{Shelah1}, we first consider the case where $\mathbf{K}_T$ fails the amalgamation property at $\aleph_0$, and then the case where $\mathbf{K}_T$ has the amalgamation property at $\aleph_0$ but is not $\omega$-stable. 

But it is worth noting that we have the following Corollary 19.14 from \cite{cat}:

\begin{theorem}
If $\mathbf{K}_T$ is $\omega$-stable then $\mathbf{K}_T$ has the amalgamation property at $\aleph_0$.
\end{theorem}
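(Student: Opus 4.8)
The plan is to reduce the amalgamation property to an extension property for atomic types, and then to invoke the independence theory that $\omega$-stability makes available. Given an amalgamation problem $(M_0, M_1, M_2)$, fix an enumeration $\bar a = (a_i : i < \omega)$ of $M_1 \setminus M_0$ and set $p(\bar x) = tp(\bar a / M_0)$. Because $M_1$ is atomic, each finite subtuple of $\bar a$ realizes an isolated type over $M_0$, so every finite restriction of $p$ lies in some $S^n_{at}(M_0)$. It then suffices to produce a countable atomic model $M_3 \succeq M_2$ together with a tuple $\bar b \in M_3$ realizing an atomic type $q$ over $M_2$ with $q \supseteq p$: the map fixing $M_0$ pointwise and sending $a_i \mapsto b_i$ is then elementary (since $\bar b \equiv_{M_0} \bar a$), giving an embedding $f_1 : M_1 \preceq M_3$, while $f_2 = \mathrm{id}_{M_2}$ gives $M_2 \preceq M_3$, and $f_1 \restriction_{M_0} = f_2 \restriction_{M_0}$. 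So everything comes down to extending the atomic type $p$ over $M_0$ to an atomic type over the larger model $M_2$, and to realizing it inside a countable atomic model.

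To carry out this extension I would first set up the rank $R$ on atomic types that $\omega$-stability provides. The point is that $\omega$-stability, i.e. the countability of each $S^n_{at}(M)$ for $M$ a countable atomic model, forces this rank to be ordinal-valued: if some atomic type had undefined rank, one could repeatedly split it, building a binary tree of atomic types whose $2^{\aleph_0}$ branches would yield $2^{\aleph_0}$ distinct members of $S^n_{at}(M)$ over a single countable atomic model $M$, contradicting $\omega$-stability. With an ordinal-valued rank in hand, one declares an extension to be nonforking exactly when it preserves the rank. Since ranks cannot increase under extension and the ordinals are well-ordered, every finite restriction of $p$ admits a rank-preserving, hence nonforking, extension over $M_2$, and these cohere into a single extension $q$ of $p$ to $M_2$.

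The crucial claim, and what I expect to be the main obstacle, is that this nonforking extension $q$ is itself atomic, i.e. each finite restriction lands in $S^n_{at}(M_2)$ rather than merely $S^n(M_2)$. This is the technical heart of the independence theory for atomic classes: one must show that a rank-preserving extension introduces no finite tuple with non-isolated type, so that realizing $q$ keeps the ambient set atomic. Granting this, the argument closes quickly. Realize $q$ by a tuple $\bar b$ in a sufficiently saturated model; then $M_2 \bar b$ is a countable atomic set, so by the existence of prime (equivalently, countable atomic) models over countable atomic sets --- again a consequence of $\omega$-stability via the same rank machinery --- there is a countable atomic model $M_3 \supseteq M_2 \bar b$.

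Since $\bar b \models q \supseteq p$, the reduction of the first paragraph applies verbatim and produces the required solution $(M_3, f_1, f_2)$. Thus the only substantial work is the development of the stationary nonforking relation, specifically the preservation of atomicity under nonforking extension, together with the construction of prime models over atomic sets; both are exactly the structural consequences that $\omega$-stability is designed to deliver, which is why the amalgamation property at $\aleph_0$ follows as a corollary.
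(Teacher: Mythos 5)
You should first know that the paper does not prove this statement at all: it is quoted verbatim from Baldwin's \emph{Categoricity} (Corollary 19.14), so there is no internal proof to compare against, and your proposal has to be judged against the standard argument in the literature. Your overall architecture matches that argument: reduce amalgamation to extending the atomic type $p = tp(\bar a/M_0)$ of an enumeration of $M_1$ to an atomic type over $M_2$, realize it, and close off to a countable atomic model. The opening reduction is correct, and the closing step is also fine --- in fact it needs no stability at all: in a countable language, any countable atomic set extends to a countable atomic model by a Henkin-style construction, since any consistent formula over a countable atomic set can be extended, parameter block by parameter block, to a decreasing chain of complete formulas whose union is an atomic type.

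The genuine gap is exactly at the point you flag and then ``grant'': that the rank-preserving (nonforking) extension of $p$ over $M_2$ is again \emph{atomic}. Given your own reduction, this claim \emph{is} the theorem --- realizing an atomic extension over $M_2$ of a type over $M_0$ literally produces the amalgam --- so the proposal reduces the statement to its hardest part and then assumes that part. Two subsidiary assertions are also left unjustified: that a rank-preserving extension exists \emph{within} the class of atomic types (rank monotonicity gives extensions among arbitrary types, which is the same atomicity issue in disguise), and that the finitary extensions ``cohere'' into one type over $M_2$ (a uniqueness/stationarity claim that needs proof). For what it is worth, the standard way to fill the gap (Shelah's, reproduced in Baldwin's Chapter 19) replaces the rank by splitting, where atomicity comes almost for free: $\omega$-stability implies, by the tree argument you sketch, that every $p \in S_{at}(M_0)$ does not split over some finite $E \subseteq M_0$; the nonsplitting extension $q$ over $M_2$ is then well defined and unique (hence coherent across finite subtuples), and it is automatically atomic, because for any $\bar n \in M_2$ the type $tp(\bar n/E)$ is isolated, hence realized by some $\bar m \in M_0$, and nonsplitting gives $tp(\bar c\, \bar n/E) = tp(\bar c\, \bar m/E)$ for $\bar c \models q$, so isolation of $tp(\bar c\, \bar m)$ (which holds since $\bar c \cup M_0$ is atomic) transfers to $tp(\bar c\, \bar n)$. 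Some argument of this kind --- connecting your independence notion back to isolation --- is what your write-up is missing, and without it the proof is not complete.
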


We will also want the following strengthening (an easy consequence of Corollary 24.4 from \cite{cat}). To state it conveniently we work in a monster model $\mathfrak{C}$ of $T$. Say that a set $A \subset \mathfrak{C}$ is atomic if every finite tuple from $A$ realizes an isolated type.

\begin{theorem}\label{countLangAmalg}
Suppose $\mathbf{K}_T$ is $\omega$-stable, and $(A_0, A_1, A_2)$ is a triple of countable atomic sets with $A_0 \subseteq A_i$ for $i = 1, 2$. Suppose $S_{at}(A_0)$ is countable. Then $(A_0, A_1, A_2)$ can be amalgamated by some countable atomic set $A_3$.
\end{theorem}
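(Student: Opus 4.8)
The plan is to realize the amalgam directly inside a monster model $\mathfrak{C}$ of $T$ as a free (nonforking) join, and to extract its atomicity from Corollary 24.4 of \cite{cat}. The key observation that keeps this light is that the theorem only asks for an atomic \emph{set} $A_3$, not an atomic model, so I never need to complete anything to a model. Working in $\mathfrak{C}$, I would fix $A_1$ as given and search for a copy $A_2'$ of $A_2$ over $A_0$; then $A_3 := A_1 \cup A_2'$ is the desired amalgam, with $f_1 : A_1 \hookrightarrow A_3$ the inclusion and $f_2 : A_2 \to A_2'$ the chosen isomorphism over $A_0$, so that $f_1 \restriction_{A_0} = f_2 \restriction_{A_0}$ holds automatically and $A_3$ is visibly countable. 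Everything thus reduces to producing $A_2'$ so that $A_1 \cup A_2'$ is atomic.

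First I would set up the independence and choose the copy. Since $\mathbf{K}_T$ is $\omega$-stable it carries a well-behaved notion of nonforking on atomic sets, and the base $A_0$ is tame because $S_{at}(A_0)$ is countable: a Cantor–Bendixson argument shows that countability of $S_{at}(A_0)$ forces the isolated atomic types to be dense over $A_0$ (otherwise a perfect tree of formulas produces $2^{\aleph_0}$ atomic types over $A_0$), and more importantly it gives enough definability/stationarity of types over $A_0$ to control nonforking extensions. Using the extension property of nonforking in $\mathfrak{C}$, I would pick $A_2' \cong_{A_0} A_2$ with $A_2'$ nonforking-independent from $A_1$ over $A_0$; such a copy is again countable and atomic.

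The heart of the argument is then the atomicity of $A_3 = A_1 \cup A_2'$ over $\emptyset$, i.e.\ that every finite tuple $\overline{a}$ with coordinates drawn from both $A_1$ and $A_2'$ has isolated type. This is exactly where independence over $A_0$ combines with the hypothesis on $A_0$: the countability of $S_{at}(A_0)$ supplies enough stationarity for the isolation of tuples coming from the $A_1$-side and from the $A_2$-side to be merged across an independent join into isolation over $\emptyset$. This merging is precisely the content packaged in Corollary 24.4 of \cite{cat}, so in the write-up I would simply verify that its hypotheses hold for $(A_0, A_1, A_2)$ — atomicity of the three sets, $A_0 \subseteq A_i$, and $S_{at}(A_0)$ countable — and quote it to conclude $A_3$ is atomic.

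The hard part, and the only place the extra hypothesis is genuinely used, is this last isolation-transfer step. Over an arbitrary countable atomic set isolated types need not be dense and nonforking extensions need not preserve isolation, and this is consistent with $\omega$-stability of $\mathbf{K}_T$ (which by definition controls only types over countable \emph{models}); so without $|S_{at}(A_0)| \le \aleph_0$ the free join can fail to be atomic and the whole amalgamation can break. Thus the crux is to extract from countability of $S_{at}(A_0)$ the definability of types over $A_0$ that keeps the amalgam atomic; once that is in hand, the choice of $A_2'$ and the bookkeeping for $f_1, f_2$ are routine. (If one instead prefers to route through the amalgamation property at $\aleph_0$ — available since $\mathbf{K}_T$ is $\omega$-stable — one can first build a model $M_0$ that is constructible, hence prime and atomic, over $A_0$ by relativizing the construction of Theorem~\ref{constructibleModels}, embed an $A_0$-copy of $M_0$ independently over each $A_i$, and amalgamate the resulting models; but that packaging relies on the identical isolation-transfer input and needs the $A_i$ to sit inside atomic models, so it is strictly heavier than the direct free-join argument above.)
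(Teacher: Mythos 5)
The paper offers no proof of this statement at all: it is asserted there as ``an easy consequence of Corollary 24.4 from \cite{cat},'' and your proposal defers its only genuinely hard step --- the atomicity of the join $A_1 \cup A_2'$ --- to exactly that same corollary, so the two approaches coincide. One caution for a final write-up: the nonforking scaffolding you erect around the citation (the extension property over a base that is merely an atomic set, and preservation of atomicity under the independent join) is itself part of what the cited corollary must supply --- these properties are not automatic over atomic sets that are not models, which is the whole point of the hypothesis that $S_{at}(A_0)$ is countable --- so you should either verify them from \cite{cat} or drop the free-join packaging and quote the corollary directly, as the paper does.
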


\vspace{5 mm}

Let $T$ be a complete theory in a language of size $\aleph_1$, and let $\mathbf{K}_T$ be its class of atomic models. In Section~\ref{cons} we define the notion ``$\mathbf{K}_T$ is club totally transcendental," generalizing the definition of $\omega$-stability for $\mathcal{L}$ countable. We then prove our main theorem, a sharpening of Theorem~\ref{submain}:

\begin{theorem}\label{main}
Suppose $\Phi^*$ holds, and $\mathsf{Cov}(\mathcal{K}) \geq \aleph_2$. Suppose $T$ is a complete theory in a language of size $\aleph_1$ with atomic models, and $\mathbf{K}_T$ is not club totally transcendental. Then $T$ has $2^{\aleph_1}$ atomic models of size $\aleph_1$.
\end{theorem}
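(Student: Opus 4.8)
The plan is to mirror the proof of Theorem~\ref{Shelah1} as closely as the uncountable language permits, organizing the argument around the amalgamation property at $\aleph_0$ and using $\Phi^*$ as the engine that manufactures $2^{\aleph_1}$ pairwise non-isomorphic models, with $\mathsf{Cov}(\mathcal{K}) \geq \aleph_2$ as the new ingredient—absent in the countable-language case—needed to keep the union of a continuous chain of countable atomic sets atomic for the \emph{full} language $\mathcal{L}$, $|\mathcal{L}| = \aleph_1$. Throughout I would work inside a monster model $\mathfrak{C}$ and build atomic sets that are models.

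First I would fix an increasing continuous filtration $\mathcal{L} = \bigcup_{\alpha < \omega_1}\mathcal{L}_\alpha$ into countable sublanguages, so that the definition of club total transcendence (given in Section~\ref{cons}) becomes a statement about a club of stages $\alpha$ and the $\mathcal{L}_\alpha$-reducts. Unwinding ``$\mathbf{K}_T$ is not club totally transcendental'' should then yield, on a stationary set $S \subseteq \omega_1$, local witnesses to non-transcendence: for each $\alpha \in S$ a countable atomic set together with a binary-splitting tree of $\mathcal{L}_\alpha$-formulas over it whose branches determine pairwise distinct atomic $\mathcal{L}_\alpha$-types. I would then apply Theorem~\ref{split}(2) to refine $S$ and distribute the weak-diamond guessing across $\omega_1$-many stationary pieces.

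The construction itself is a tree of countable atomic sets $(M_\eta : \eta \in 2^{<\omega_1})$, increasing and continuous along each branch, the two immediate successors of $M_\eta$ both properly extending it; for a branch $\xi \in 2^{\omega_1}$ the resulting model is $M_\xi = \bigcup_{\alpha<\omega_1} M_{\xi \restriction_\alpha}$, of size $\aleph_1$. At a coding stage $\alpha \in S$ I use the local tree of types to make a genuine binary choice governed by $\eta(\alpha)$; at every stage I must also \emph{continue} the chain so that the eventual union is atomic for all of $\mathcal{L}$, not merely for each $\mathcal{L}_\alpha$. This continuation requirement is where $\mathsf{Cov}(\mathcal{K}) \geq \aleph_2$ enters: preserving isolation against the $\aleph_1$-many formulas of $\mathcal{L}$ amounts to choosing, at each step, a real coding the new elements that avoids $\aleph_1$-many closed nowhere dense sets of ``bad'' choices—exactly as Cohen reals avoid $\aleph_1$-many such sets in the proof of Theorem~\ref{consistency}—and since $\aleph_1 < \mathsf{Cov}(\mathcal{K})$ such a real exists. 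When amalgamation at $\aleph_0$ holds I would feed the strengthened amalgamation Theorem~\ref{countLangAmalg} into these continuation steps; when it fails I would instead drive the non-isomorphism directly off an unamalgamable triple, as in the failure-of-amalgamation case of Theorem~\ref{Shelah1}.

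Finally, for non-isomorphism: assuming toward a contradiction that fewer than $2^{\aleph_1}$ of the $M_\xi$ are pairwise non-isomorphic, a bookkeeping argument packages a putative isomorphism into a single function $F : (2 \times 2 \times \omega_1)^{<\omega_1} \to 2$; the enhanced form of $\Phi^*$ (stated just after the definition of $\Phi(S)$) produces a guessing function $g$ whose predictions the binary choices $\eta(\alpha)$ contradict on a stationary subset of $S$, ruling out the isomorphism. I expect the main obstacle to be precisely the coordination in the preceding paragraph: in the countable-language case the tree of types lives over a single countable model and its branches yield the models outright, whereas here the non-transcendence witnesses are spread along the stationary set $S$ and only control the countable fragments $\mathcal{L}_\alpha$, so one must verify that the covering-number generic continuations neither collapse the distinctness of the coded branches nor erase the stationarity of the set of stages at which the coding is genuinely active. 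Keeping full $\mathcal{L}$-atomicity and effective weak-diamond coding simultaneously alive is the crux of the argument.
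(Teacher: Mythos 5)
Your skeleton matches the paper's at the top level: split on whether amalgamation holds, build a tree $(A_{\mathbf{s}}: \mathbf{s} \in 2^{<\omega_1})$ of countable atomic sets, package potential isomorphisms into a function $F: (2 \times 2 \times \omega_1)^{<\omega_1} \to 2$, and use $\Phi^*$ (with Theorem~\ref{split}(2) to split a stationary set into $\omega_1$-many stationary pieces) to defeat them. When amalgamation fails, this works essentially as you describe (it is the $\mathbb{P} = \emptyset$ case of the paper's Lemma~\ref{promises}). The genuine gap is in the other case, where $\mathbf{K}_T$ \emph{has} the club amalgamation property but is not club totally transcendental. Your plan there --- ``make a genuine binary choice governed by $\eta(\alpha)$'' from a splitting tree of types --- does not yield non-isomorphic models, because under amalgamation the choice of realizing $p_0$ rather than $p_1$ at stage $\alpha$ is not an isomorphism-invariant commitment: nothing prevents the branch that realized $p_0$ from later realizing a conjugate of $p_1$ as well, so when weak diamond correctly guesses an initial segment of an isomorphism, the two sides simply amalgamate and no contradiction appears. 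The diagonalization needs a failure of amalgamation at the guessed stage, which is exactly what the case hypothesis denies.

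The paper's resolution, absent from your proposal, is the system-of-promises machinery: one works in an enriched category of pairs $(A, \Phi)$, where $\Phi$ is a countable set of ``promises'' $\Gamma(p)$, each a union of $\aleph_1$-many closed nowhere dense subsets of atomic type spaces generated from a point $p$ of the perfect kernel $K(A)$ by the successor relation $\prec$ of Definition~\ref{precDef}. The point of $\Gamma(p)$ is that realizing $p$ on one side while promising to omit $\Gamma(p)$ on the other produces an unamalgamable $\mathbb{P}$-amalgamation problem, and that promises can be kept as the construction continues (Lemmas~\ref{amalg1}, \ref{extend}, \ref{amalg2}). This is also where $\mathsf{Cov}(\mathcal{K}) \geq \aleph_2$ actually enters: one must repeatedly choose types inside $\aleph_1$-comeager subsets of perfect Polish type spaces (and extract a perfect subset from a type space of size continuum, using $\aleph_1 < 2^{\aleph_0}$), i.e.\ dodge $\aleph_1$-many closed nowhere dense sets of types. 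Your stated role for the covering number --- keeping unions of chains atomic for the full $\mathcal{L}$ --- is not the issue; that is handled outright by the filtration and base bookkeeping ($\mathbf{C}_0$, $\alpha$-bases, continuity of the chain), with no set-theoretic hypothesis. Once the promise system $\mathbb{P}$ is shown to fail club amalgamation, the weak-diamond argument (your final paragraph, correct in outline) is run in the enriched category rather than on bare atomic sets, and both cases of the theorem become instances of the single Lemma~\ref{promises}. Relatedly, your appeal to Theorem~\ref{countLangAmalg} in the continuation steps cannot work as stated: its hypothesis (countability of the relevant type space) is exactly what fails at the stages witnessing non-transcendence.
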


The hypotheses can be understood as follows: the First and Second Examples require CH (to matter at $\aleph_1$), which $\mathsf{Cov}(\mathcal{K}) \geq \aleph_2$ prevents; and the Third Example requires the existence of ladder systems with the uniformization property, which $\Phi^*$ prevents.

The proof of Theorem~\ref{main} follows the same outline as that of Theorem~\ref{Shelah1}. Namely, we will say what it means for $\mathbf{K}_T$ to have the club amalgamation property, then split into two cases, depending on whether $\mathbf{K}_T$ fails the club amalgamation property, or else $\mathbf{K}_T$ has the club amalgamation property but it not club totally transcendental. 

As in the countable case we can actually show that club totally transcendental implies the club amalgamation property; this is discussed in Section~\ref{amalgamation}. However this is not technically needed for the proof.

Finally, one obtains Theorem~\ref{submain} quickly, since if $T$ has no constructible models then $\mathbf{K}_T$ is not club totally transcendental; see Section~\ref{ttAndCons}.
%%%%%%%%%%%%%%%%%%%%%%%%%%%
\section{Unique Atomic Model that is not Prime}
\label{ContinuumExample}
%%%%%%%%%%%%%%%%%%%%%%%%%%%

In this section I construct the First Example: namely an atomic model $\mathfrak{A} \models T$, in a language of size continuum, which has a unique atomic model that is not prime.

Given $\eta \in 2^{<\omega_1}$, let $\lg(\eta)$ be its length, i.e. its domain.

Let $\mathcal{L} = (U_\alpha,  \pi_{\alpha \beta}, \boldsymbol{\eta} : \beta \leq \alpha < \omega_1, \eta \in 2^{<\omega_1})$ where each $U_\alpha$ is a unary relation symbol, each $\boldsymbol{\eta}: U_{\lg(\eta)} \to U_{\lg(\eta)}$ is a unary function symbol, and each $\pi_{\alpha \beta}: U_\alpha \to U_\beta$ is a unary function symbol. (Formally, since we are using single-sorted logic, each of these function symbols will be total, but we let their values be trivial outside their domain.)

We turn $2^{<\omega_1}$ into an $\mathcal{L}$-structure $\mathfrak{A} = (2^{<\omega_1}, U_\alpha, \pi_{\alpha \beta}, \boldsymbol{\eta}: \beta \leq \alpha < \omega_1, \eta \in 2^{<\omega_1})$ as follows:

\begin{itemize}
\item Interpret $U_\alpha$ as $2^\alpha$, i.e. all $\eta \in 2^{<\omega_1}$ with $\lg(\eta) = \alpha$;
\item Given $\tau \in 2^\alpha$ and $\beta \leq \alpha$, interpret $\pi_{\alpha \beta}(\tau)$ as $\tau \restriction_\beta$;
\item Given $\eta \in 2^\alpha$ and $\tau \in U_\alpha$ interpret $\boldsymbol{\eta}(\tau)$ as $\eta \oplus \tau$, where the addition is pointwise mod 2.
\end{itemize}

Let $T$ be the complete theory of $\mathfrak{A}$.

\begin{theorem}\label{continuumTheorem}
$\mathfrak{A}$ is the unique atomic model of $T$, and it is not prime.
\end{theorem}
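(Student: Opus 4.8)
The plan is to first pin down the quantifier-free types and then bootstrap to full types via quantifier elimination. Fix a tuple $\bar\tau = (\tau_1,\dots,\tau_n)$ with $\lg(\tau_i) = \alpha_i$. Every $\mathcal{L}$-term in one variable reduces, on $U_\alpha$, to the form $\boldsymbol{\eta}(\pi_{\alpha\beta}(x))$, i.e.\ to $\eta \oplus (x\restriction_\beta)$ for some $\beta \le \alpha$ and $\eta \in 2^\beta$, because $\pi$ composes with $\pi$, $\boldsymbol{\eta}$ composes with $\boldsymbol{\eta'}$ to give $\boldsymbol{\eta \oplus \eta'}$, and the two kinds of maps commute up to the projected translation. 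Consequently the only quantifier-free data in the type of $\bar\tau$ are the lengths $\alpha_i$ together with, for each pair $i,j$ with $\alpha_i \le \alpha_j$, the ``difference after projection'' $\zeta_{ij} := \tau_i \oplus (\tau_j\restriction_{\alpha_i}) \in 2^{\alpha_i}$, which is recorded by the single formula $\boldsymbol{\zeta_{ij}}(\pi_{\alpha_j \alpha_i}(x_j)) = x_i$. Let $\varphi_{\bar\tau}(\bar x)$ be the finite conjunction of these formulas with the $U_{\alpha_i}(x_i)$. I would then show $\varphi_{\bar\tau}$ is complete by establishing quantifier elimination for $T$: given two tuples realizing $\varphi_{\bar\tau}$ in models of $T$, the assignment $\tau_i \mapsto \tau_i'$ extends to an isomorphism of the generated substructures, and a back-and-forth argument extends this to an elementary map, the key homogeneity inputs being that each $\boldsymbol{\eta}$ acts freely (so each level is a torsor under the external group $2^\alpha$) and that each $\pi_{\alpha\beta}$ is surjective (a first-order consequence of $T$), so any required witness can be found. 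Quantifier elimination makes every $\varphi_{\bar\tau}$ complete, which says precisely that $\mathfrak{A}$ realizes only isolated types; in particular $\mathfrak{A}$ is atomic.

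For uniqueness let $M \models T$ be atomic. Two finite-level facts come for free. First, $M$ has no ``junk'': if $a \in M$ lay in no $U_\alpha$, then its type would contain $\{\neg U_\alpha(x) : \alpha < \omega_1\}$, and no single formula, mentioning only finitely many symbols, can entail this infinite set, so the type is non-isolated, a contradiction. Second, each level of $M$ is a single orbit of the $2^\alpha$-action: if $a \in U_\alpha$ and $b \in U_\beta$ with $\alpha \le \beta$ had $\pi_{\beta\alpha}(b)$ outside the orbit of $a$, then the type of $(a,b)$ would contain the non-isolable set $\{\boldsymbol{\eta}(\pi_{\beta\alpha}(b)) \ne a : \eta \in 2^\alpha\}$; taking $\beta = \alpha$ shows any two elements of $U_\alpha^M$ lie in one orbit. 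I would then argue by induction on $\alpha$ that the levels below $\alpha$ form a copy of $2^{<\alpha}$. The successor step is immediate from surjectivity of $\pi$; the crux is a limit $\lambda$, where the torsor structure is used decisively. Fixing any $x_0 \in U_\lambda^M$ (nonempty by $T$), the assignment $\boldsymbol{\eta}(x_0) \mapsto (\pi_{\lambda\beta}(\boldsymbol{\eta}(x_0)))_{\beta<\lambda}$ shows the single orbit $U_\lambda^M$ maps \emph{onto} the inverse limit $2^\lambda$ of the lower levels, since $\eta$ ranges over all of $2^\lambda$; and injectivity follows from freeness, as $\eta\restriction_\beta = 0$ for every $\beta < \lambda$ forces $\eta = 0$. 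Hence $U_\lambda^M \cong 2^\lambda$ compatibly, the induction goes through, and $M \cong \mathfrak{A}$.

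Finally, non-primeness. The observation is that a cofinal branch, a coherent family $(y_\alpha)_{\alpha < \omega_1}$ with $y_\alpha \in U_\alpha$ and $\pi_{\alpha'\alpha}(y_{\alpha'}) = y_\alpha$, is transported by any elementary embedding: if $f : \mathfrak{A} \preceq N$ then for $b \in 2^{\omega_1}$ the family $(f(b\restriction_\alpha))_{\alpha<\omega_1}$ is a cofinal branch of $N$. Since $\mathfrak{A}$ has cofinal branches, a model $N \equiv \mathfrak{A}$ with \emph{no} cofinal branch cannot receive an elementary embedding of $\mathfrak{A}$, so $\mathfrak{A}$ is not prime. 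The plan is therefore to construct such an $N$: a ``fat Aronszajn'' model, a tree of height $\omega_1$ carrying the $\pi$- and $\boldsymbol{\eta}$-structure, with each level a union of $2^{\aleph_0}$-sized torsors and with every potential cofinal branch killed at some countable stage, exactly as in the construction of an Aronszajn tree. I expect this construction, and more delicately the verification that $N \equiv \mathfrak{A}$, to be the main obstacle. The cleanest route is to isolate a first-order axiom set (disjoint levels; the group-action, equivariance, and freeness laws; surjectivity of each $\pi_{\alpha\beta}$; and the exact finite-level data forcing $2^{<\omega}$ below $\omega$) and to show, by the same back-and-forth as above, that these axioms are complete; then any model of them, branchless or not, is automatically elementarily equivalent to $\mathfrak{A}$, and it remains only to build a branchless model of the axioms.
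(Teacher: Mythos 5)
Your treatment of atomicity and uniqueness is essentially sound and close to the paper's own route: your isolating formulas (pairwise projected differences) are equivalent to the paper's $\phi_{\overline{\eta}}$, and your level-by-level torsor identification is a repackaging of the paper's argument, which builds a coherent branch $(b_\alpha : \alpha < \omega_1)$ through an arbitrary atomic model and takes definable closures. (One point to make explicit in both your no-junk and single-orbit claims: since $T$ is complete and $\mathfrak{A} \models T$, every formula consistent with $T$ is realized in $\mathfrak{A}$; this is also what upgrades homogeneity of $\mathfrak{A}$ to genuine isolation of types.)

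The genuine gap is the non-primeness half, which you yourself flag as "the main obstacle": you never construct the branchless model $N$, nor prove completeness of an axiom system, and these two steps are precisely the content of the paper's proof. Moreover, the axiom list you sketch (disjoint levels, free action, equivariance, surjectivity of the $\pi$'s, finite-level data) is provably \emph{not} complete: take $U_\beta = 2^\beta$ for $\beta \leq \omega$ and $U_\gamma = 2^\gamma \times \{0,1\}$ for $\gamma > \omega$, with $\boldsymbol{\eta}(\tau,i) = (\eta \oplus \tau, i)$ and $\pi$'s truncating the first coordinate; this satisfies every axiom you list, yet each element of $U_\omega$ has four $\pi_{\omega+1\,\omega}$-preimages, contradicting the first-order sentence, true in $\mathfrak{A}$, that each element has exactly two. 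The paper's axiomatization includes exactly this axiom (Schema III), plus the schema of all universal quantifier-free one-variable sentences true in $\mathfrak{A}$, and proves completeness not by a direct back-and-forth (where unsorted "junk" elements would have to be handled) but by showing suitable finite-language reducts are $\aleph_0$-categorical. Finally, the paper does not run an Aronszajn-style recursive sealing construction at all; it defines the branchless model directly: elements are pairs $(\tau, s)$ with $\tau \in 2^{<\omega_1}$ and $s$ a finite increasing sequence of limit ordinals satisfying $s(|s|-2) \leq \lg(\tau) < s(|s|-1)$, with $\pi^*$ truncating both coordinates. Along any would-be elementary image of the branch $(\overline{0}_\alpha : \alpha < \omega_1)$, the $s$-coordinates form an increasing $\omega_1$-chain of finite sequences, hence stabilize at some fixed $s$; but then for $\alpha \geq s(|s|-1)$ the $\tau$-coordinate outgrows the cap $s(|s|-1)$, a contradiction. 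Without this (or an equally concrete) construction together with a completed completeness proof, your non-primeness argument is a program rather than a proof.
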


The proof goes as follows. First we establish that $\mathfrak{A}$ is the unique atomic model of $T$. Then we give an axiomatization of $T$, and use it to exhibit a model $\mathfrak{B}$ of $T$ into which $\mathfrak{A}$ does not embed; in fact $\mathfrak{B}$ will omit $tp_{\mathfrak{A}}(\overline{0}_\alpha: \alpha < \omega_1)$, where $\overline{0}_\alpha \in 2^\alpha$ is the zero sequence.

\begin{lemma} We write down some straightforward observations:
\begin{itemize}
\item Given $\eta, \tau \in 2^{\alpha}$, $\boldsymbol{\eta} \boldsymbol{\tau} = \boldsymbol{\tau} \boldsymbol{\eta}$;
\item Given $\eta \in 2^{\alpha}$ and $\beta \leq \alpha$, $\pi_{\alpha \beta} \boldsymbol{\eta} = \,\boldsymbol{\eta \! \restriction_{\beta}} \pi_{\alpha \beta}$;
\item Given $\gamma \leq \beta \leq \alpha$, $\pi_{\beta \gamma} \pi_{\alpha \beta} = \pi_{\alpha \gamma}$, and $\pi_{\alpha \alpha}$ is the identity on $2^\alpha$;
\item Given $\nu \in 2^{\omega_1}$, the map $f_\nu: \mathfrak{A} \to \mathfrak{A}$ defined by $f_\nu(\eta) = \, \boldsymbol{\nu \! \restriction_{\lg(\eta)}}(\eta)$ is an automorphism of $\mathfrak{A}$;
\item For all $\eta \in 2^\alpha, \tau \in 2^\beta$, if $\alpha \geq \beta$ then $\tau$ is in the definable closure $\eta$, namely $\tau = \boldsymbol{\tau} \pi_{\alpha \beta} \boldsymbol{\eta}(\eta)$.
\item In particular, for all $\eta, \tau$, either $\eta$ is in the definable closure of $\tau$ or vice versa.
\end{itemize}
\end{lemma}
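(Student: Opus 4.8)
The plan is to verify every item by direct computation from the definitions, exploiting the single structural fact that underlies all of them: each sort $U_\alpha = 2^\alpha$ is an elementary abelian $2$-group under pointwise addition $\oplus$, the maps $\boldsymbol{\eta}$ are the translations $\sigma \mapsto \eta \oplus \sigma$ of this group, and restriction $\sigma \mapsto \sigma\restriction_\beta$ is a surjective group homomorphism $2^\alpha \to 2^\beta$ that distributes over $\oplus$. Once this dictionary is fixed, each claim becomes a one-line check performed on an arbitrary element.

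For the first three items I would evaluate both sides at an arbitrary $\sigma$. Commutativity $\boldsymbol{\eta}\boldsymbol{\tau} = \boldsymbol{\tau}\boldsymbol{\eta}$ is just commutativity and associativity of $\oplus$: both sides send $\sigma$ to $\eta \oplus \tau \oplus \sigma$. The intertwining relation $\pi_{\alpha\beta}\boldsymbol{\eta} = \boldsymbol{\eta\!\restriction_\beta}\pi_{\alpha\beta}$ reduces to the observation that restriction distributes over addition, $(\eta \oplus \sigma)\restriction_\beta = (\eta\restriction_\beta)\oplus(\sigma\restriction_\beta)$, so both sides equal $(\eta\restriction_\beta)\oplus(\sigma\restriction_\beta)$. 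The functoriality $\pi_{\beta\gamma}\pi_{\alpha\beta} = \pi_{\alpha\gamma}$ and $\pi_{\alpha\alpha} = \mathrm{id}$ are immediate from $(\sigma\restriction_\beta)\restriction_\gamma = \sigma\restriction_\gamma$.

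The automorphism claim is the item requiring the most care, since one must check preservation of the entire (infinite) signature. I would first note that $f_\nu$ maps $2^\alpha$ into $2^\alpha$ (indeed $f_\nu(\eta) = (\nu\restriction_\alpha)\oplus\eta$ when $\lg(\eta) = \alpha$), so it respects each $U_\alpha$; it is a bijection because it is an involution, $f_\nu(f_\nu(\eta)) = \eta$, using that $\oplus$ is its own inverse. Preservation of each $\pi_{\alpha\beta}$ and of each $\boldsymbol{\eta}$ then reduces to a two-line computation, again using that restriction distributes over $\oplus$ and that $\oplus$ is commutative; the only subtlety is bookkeeping the length of the argument so that the correct restriction $\nu\restriction_{\lg(\cdot)}$ appears on each side, and the coherence that $\nu\restriction_\alpha$ itself restricts to $\nu\restriction_\beta$ is exactly what makes $f_\nu$ commute with the projections.

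Finally, for the definable-closure items the key trick is the cancellation $\boldsymbol{\eta}(\eta) = \eta \oplus \eta = \overline{0}_\alpha$. Applying $\pi_{\alpha\beta}$ sends this to $\overline{0}_\beta$, and then $\boldsymbol{\tau}(\overline{0}_\beta) = \tau$; so the term $t(x) = \boldsymbol{\tau}\pi_{\alpha\beta}\boldsymbol{\eta}(x)$, whose function symbols all lie in $\mathcal{L}$, satisfies $t(\eta) = \tau$, witnessing that $\tau$ lies in the definable closure of $\eta$ whenever $\lg(\eta) \ge \lg(\tau)$. The last item follows at once: given any $\eta, \tau$, compare $\lg(\eta)$ and $\lg(\tau)$ and apply the previous item to the longer one. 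There is no genuine obstacle anywhere here; the point worth flagging is only that these trivial observations are precisely the algebraic facts -- homogeneity through the translations $f_\nu$, and the linear preordering of elements by definable closure -- that the subsequent argument will rely on to prove $\mathfrak{A}$ is atomic and to control its type structure.
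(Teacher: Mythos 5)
Your proof is correct and matches the paper's intent exactly: the paper states these items as unproved ``straightforward observations,'' and your direct computations from the group structure of $(2^\alpha, \oplus)$ together with the fact that restriction is a homomorphism are precisely the verification the paper leaves implicit. Nothing is missing; the only point one might add is that since the function symbols are formally total with trivial values outside their sorts, checking the automorphism property of $f_\nu$ on the designated domains (as you do) suffices.
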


\begin{lemma}
\label{types1}
Suppose $\overline{\eta} = \eta_0 \ldots \eta_{n-1}$ is a finite sequence from $2^{<\omega_1}$. Write $\alpha_i = \lg(\eta_i)$; we can suppose $\alpha_0 \geq \alpha_i$ for all $i < n$. Then the formula 

\[\phi_{\overline{\eta}}(x_0 \ldots x_{n-1}) := U_{\alpha_0}(x_0) \,\,\, \land \,\,\, \bigwedge_{i < n} x_i = \boldsymbol{\eta_i}\pi_{\alpha_0\alpha_i}  \boldsymbol{\eta_{0}} (x_0)\]

isolates $tp_{\mathfrak{A}}(\overline{\eta})$. In particular $\mathfrak{A}$ is an atomic model of $T$.
\end{lemma}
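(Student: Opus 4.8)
The plan is to show that every tuple realizing $\phi_{\overline{\eta}}$ in $\mathfrak{A}$ is the image of $\overline{\eta}$ under an automorphism of $\mathfrak{A}$, so that all such tuples share the single type $tp_{\mathfrak{A}}(\overline{\eta})$; the isolating automorphisms are exactly the maps $f_\nu$ from the preceding lemma. First I would record that $\overline{\eta}$ itself satisfies $\phi_{\overline{\eta}}$: feeding $x_0 = \eta_0$ through the term $\boldsymbol{\eta_i}\pi_{\alpha_0\alpha_i}\boldsymbol{\eta_0}$ gives $\boldsymbol{\eta_0}(\eta_0) = \overline{0}_{\alpha_0}$, then $\pi_{\alpha_0\alpha_i}(\overline{0}_{\alpha_0}) = \overline{0}_{\alpha_i}$, then $\boldsymbol{\eta_i}(\overline{0}_{\alpha_i}) = \eta_i$, so the $i$-th conjunct reads $\eta_i = \eta_i$. (The hypothesis $\alpha_0 \geq \alpha_i$ is what makes $\pi_{\alpha_0\alpha_i}$ meaningful, and it is harmless, since reordering the tuple merely permutes the variables of the isolating formula.)

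For the main step, fix any $\overline{a} = (a_0,\dots,a_{n-1})$ with $\mathfrak{A} \models \phi_{\overline{\eta}}(\overline{a})$. Then $a_0 \in 2^{\alpha_0}$ and each $a_i$ is the fixed term $\boldsymbol{\eta_i}\pi_{\alpha_0\alpha_i}\boldsymbol{\eta_0}(a_0) = \eta_i \oplus (\eta_0 \oplus a_0)\!\restriction_{\alpha_i}$. I would then choose $\nu \in 2^{\omega_1}$ with $\nu\!\restriction_{\alpha_0} = \eta_0 \oplus a_0$ (and, say, $0$ elsewhere) and verify that the automorphism $f_\nu$ sends $\overline{\eta}$ to $\overline{a}$: indeed $f_\nu(\eta_0) = (\eta_0 \oplus a_0) \oplus \eta_0 = a_0$, and for $i \geq 1$, using $\alpha_i \leq \alpha_0$ and the pointwise nature of $\oplus$, $f_\nu(\eta_i) = (\nu\!\restriction_{\alpha_i}) \oplus \eta_i = (\eta_0 \oplus a_0)\!\restriction_{\alpha_i} \oplus \eta_i = a_i$. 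Since $f_\nu$ is an automorphism of $\mathfrak{A}$, $tp_{\mathfrak{A}}(\overline{a}) = tp_{\mathfrak{A}}(\overline{\eta})$; thus every realization of $\phi_{\overline{\eta}}$ in $\mathfrak{A}$ has type $tp_{\mathfrak{A}}(\overline{\eta})$.

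The step I expect to be the genuine obstacle is upgrading ``all realizations in $\mathfrak{A}$ are conjugate'' to ``$\phi_{\overline{\eta}}$ isolates the type over $T$,'' i.e. ruling out a competing complete type that contains $\phi_{\overline{\eta}}$ but is realized only in other models of $T$. Here I would exploit that $\phi_{\overline{\eta}}$ pins each $x_i$ (for $i \geq 1$) to an explicit term in $x_0$: modulo $T$, $\phi_{\overline{\eta}}(\overline{x})$ is equivalent to $U_{\alpha_0}(x_0) \wedge \bigwedge_{i\geq 1} x_i = t_i(x_0)$, so by substituting $x_i = t_i(x_0)$ one sees that $\phi_{\overline{\eta}}$ decides a formula $\psi(\overline{x})$ over $T$ if and only if $U_{\alpha_0}(x_0)$ decides $\psi(x_0, t_1(x_0),\dots,t_{n-1}(x_0))$. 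This reduces isolation of the $n$-type to the single assertion that $U_{\alpha_0}(x_0)$ generates a complete $1$-type, equivalently that there is a unique complete $1$-type extending ``$x_0 \in U_{\alpha_0}$.'' Within $\mathfrak{A}$ this is witnessed by the transitive action of the automorphisms $f_\nu$ on $U_{\alpha_0}$; to secure it across all models of $T$ I would appeal to a quantifier-elimination / back-and-forth analysis of $T$ — the same axiomatization the paper develops afterwards for the construction of $\mathfrak{B}$. Once the $1$-type is pinned down, the reduction yields that $\phi_{\overline{\eta}}$ isolates $tp_{\mathfrak{A}}(\overline{\eta})$, and since this holds for every finite tuple $\overline{\eta}$, $\mathfrak{A}$ is atomic.
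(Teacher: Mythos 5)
Your first two paragraphs reproduce the paper's proof exactly: verify $\mathfrak{A} \models \phi_{\overline{\eta}}(\overline{\eta})$, and then map $\overline{\eta}$ onto an arbitrary realization $\overline{a}$ by the automorphism $f_\nu$ with $\nu = (\eta_0 \oplus a_0)^\frown \overline{0}$ (the paper's $\nu := (\tau_0 \oplus \eta_0)^\frown\overline{0}$). At that point the proof is finished; the paper stops there, and rightly so.

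Your third paragraph misdiagnoses an obstacle that does not exist, and the machinery you invoke to handle it is both unnecessary and left as an unexecuted promissory note. The key point you are missing is that $T = Th(\mathfrak{A})$ is the complete theory of the \emph{specific} structure $\mathfrak{A}$, so every formula consistent with $T$ is realized in $\mathfrak{A}$: if $\chi(\overline{x})$ is consistent with $T$, then $\neg\exists\overline{x}\,\chi \notin T$, hence $\exists\overline{x}\,\chi \in T$, hence $\mathfrak{A} \models \exists\overline{x}\,\chi$. Consequently a ``competing complete type containing $\phi_{\overline{\eta}}$ but realized only in other models of $T$'' cannot exist: if $q$ were such a type, choose $\psi \in q$ with $\neg\psi \in tp_{\mathfrak{A}}(\overline{\eta})$; then $\phi_{\overline{\eta}} \wedge \psi$ is consistent with $T$, hence realized in $\mathfrak{A}$ by some $\overline{a}$, and your second paragraph gives $tp_{\mathfrak{A}}(\overline{a}) = tp_{\mathfrak{A}}(\overline{\eta})$, contradicting $\psi \in tp_{\mathfrak{A}}(\overline{a})$. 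In other words, for a formula consistent with a complete theory, ``all realizations in one fixed model have the same type'' is already equivalent to ``the formula isolates that type''; conjugacy under $\mbox{Aut}(\mathfrak{A})$ is more than enough. So the appeal to a quantifier-elimination or back-and-forth analysis of $T$ (via the axiomatization the paper develops only \emph{after} this lemma, and whose completeness proof is itself only sketched there) is not needed, and since you do not carry it out, your write-up as it stands delegates its self-declared ``genuine obstacle'' to an unproved claim. Delete the third paragraph, note the general fact above in one line, and your proof is complete and identical to the paper's.
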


\begin{proof}
It is clear that $\mathfrak{A} \models \phi_{\overline{\eta}}(\overline{\eta})$. Conversely, suppose $\mathfrak{A} \models \phi_{\overline{\eta}}(\tau_0, \ldots, \tau_{n-1})$. Let $\nu := (\tau_0 \oplus \eta_0) \,^\frown \overline{0} \in 2^{\omega_1}$. Then $f_{\nu}: \mathfrak{A} \cong \mathfrak{A}$ (defined above) is an automorphism of $\mathfrak{A}$ taking $\overline{\eta}$ to $\overline{\tau}$, so they have the same type.
\end{proof}

\begin{lemma}
\label{continuumUniqueness}
$\mathfrak{A}$ is the unique atomic model of $T$.
\end{lemma}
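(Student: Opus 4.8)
The plan is to construct an explicit isomorphism $F \colon \mathfrak{A} \to M$ for an arbitrary atomic model $M \models T$, exploiting that each level $U_\alpha$ is a torsor for the group $(2^\alpha, \oplus)$ acting by the $\boldsymbol{\eta}$'s, and that the projections are equivariant. I would first pin down the level structure of $M$. The sentences $\forall x\,(U_\alpha(x) \to \neg U_\beta(x))$ for $\alpha \ne \beta$ hold in $\mathfrak{A}$, hence in $M$, so the $U_\alpha^M$ are pairwise disjoint. Moreover every $a \in M$ lies in some $U_\alpha^M$: since $M$ is atomic, $tp(a/\emptyset)$ is isolated, hence realized in $\mathfrak{A}$; as every element of $\mathfrak{A}$ satisfies some $U_\alpha$, the isolating formula entails $U_\alpha(x)$ for that $\alpha$, so $M \models U_\alpha(a)$. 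Thus $M = \bigsqcup_{\alpha < \omega_1} U_\alpha^M$.

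Next I would show each $U_\alpha^M$ is a single free orbit under $2^\alpha$. The identities $\boldsymbol{\eta}\,\boldsymbol{\tau} = \boldsymbol{\eta \oplus \tau}$ and $\boldsymbol{\overline{0}_\alpha} = \mathrm{id}$ on $U_\alpha$, together with fixed-point-freeness $\forall x\,(U_\alpha(x) \to \boldsymbol{\eta}(x) \ne x)$ for $\eta \ne \overline{0}_\alpha$, are first-order and hold in $\mathfrak{A}$, so in $M$ the set $U_\alpha^M$ is a free $2^\alpha$-set. Transitivity is where atomicity re-enters: given $a, b \in U_\alpha^M$, the type $tp(a,b)$ is isolated, hence realized in $\mathfrak{A}$ by a pair at level $\alpha$, so by Lemma~\ref{types1} it entails $x_1 = \boldsymbol{\zeta}(x_0)$ for some $\zeta \in 2^\alpha$; thus $b = \boldsymbol{\zeta}^M(a)$ and $U_\alpha^M$ is a regular $2^\alpha$-set. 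I would also record (all first-order, from the observations above) that each $\pi_{\alpha\beta}^M$ is surjective, that $\pi_{\beta\gamma}^M \pi_{\alpha\beta}^M = \pi_{\alpha\gamma}^M$, and that $\pi_{\alpha\beta}^M \boldsymbol{\eta}^M = \boldsymbol{\eta \! \restriction_{\beta}}^M \pi_{\alpha\beta}^M$.

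The heart of the proof is to choose a coherent system of basepoints $(c_\alpha : \alpha < \omega_1)$, $c_\alpha \in U_\alpha^M$, with $\pi_{\alpha\beta}^M(c_\alpha) = c_\beta$ for all $\beta \le \alpha$, by recursion on $\alpha$. For $\alpha = 0$, take the unique element of $U_0^M$. At a successor $\gamma+1$, surjectivity of $\pi_{\gamma+1,\gamma}^M$ gives a lift of $c_\gamma$, and the cocycle identity propagates coherence below $\gamma$. The limit step is the one genuinely requiring care, and is the step I expect to be the main obstacle: a priori a coherent sequence need not lift through an inverse system. It does lift here precisely because $2^\alpha$ is itself the inverse limit of the $2^\beta$ and every translation is named in the language. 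Concretely, pick any $d \in U_\alpha^M$ and let $\zeta_\beta \in 2^\beta$ be the unique element with $c_\beta = \boldsymbol{\zeta_\beta}^M(\pi_{\alpha\beta}^M(d))$; coherence of $(c_\beta)$ together with freeness forces $\zeta_{\beta'}\!\restriction_{\beta} = \zeta_\beta$ for $\beta < \beta'$, so $\eta := \bigcup_{\beta<\alpha} \zeta_\beta \in 2^\alpha$, and $c_\alpha := \boldsymbol{\eta}^M(d)$ satisfies $\pi_{\alpha\beta}^M(c_\alpha) = c_\beta$ for all $\beta < \alpha$.

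Finally I would define $F(\eta) := \boldsymbol{\eta}^M(c_{\lg(\eta)})$ for $\eta \in 2^{<\omega_1}$. Freeness makes $F$ injective on each level, disjointness of levels makes it globally injective, and transitivity together with $M = \bigsqcup_\alpha U_\alpha^M$ makes it onto. It preserves $U_\alpha$ by construction; it preserves each $\boldsymbol{\tau}$ because $\boldsymbol{\tau}^M \boldsymbol{\eta}^M = \boldsymbol{\tau \oplus \eta}^M$; and it preserves each $\pi_{\alpha\beta}$ because $\pi_{\alpha\beta}^M(\boldsymbol{\eta}^M(c_\alpha)) = \boldsymbol{\eta \! \restriction_{\beta}}^M(c_\beta) = F(\eta\!\restriction_{\beta})$ by equivariance and $\pi_{\alpha\beta}^M(c_\alpha) = c_\beta$. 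The trivial off-domain behaviour of the function symbols is matched automatically, being fixed by first-order sentences of $T$ that $F$ respects once it respects the $U_\alpha$. Hence $F$ is an isomorphism and $M \cong \mathfrak{A}$.
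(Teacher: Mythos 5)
Your proof is correct and follows essentially the same route as the paper: the decomposition of $M$ into levels via atomicity, the recursive construction of a coherent basepoint chain with the identical limit-step correction (using the group action to fix up an arbitrary lift), and atomicity again for transitivity on each level. The only difference is cosmetic: you write the final isomorphism explicitly as the torsor map $\eta \mapsto \boldsymbol{\eta}^M(c_{\lg(\eta)})$, whereas the paper packages the same verification as a partial elementary map extended to definable closures.
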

\begin{proof}
Suppose $\mathfrak{B} \models T$ is atomic; say $\mathfrak{B} = (B, U^*_\alpha, \pi^*_{\alpha\beta}, \boldsymbol{\eta}^* : \eta \in 2^{<\omega_1}, \beta \leq \alpha < \omega_1)$.

Note that Lemma~\ref{types1} characterizes all the complete isolated types of $T$. In particular $B = \bigcup_{\alpha <\omega_1} U^*_\alpha$.

We define by induction on $\alpha < \omega_1$ an element $b_\alpha \in U^*_\alpha$ such that for all $\beta \leq \alpha < \omega_1$, $b_\beta = \pi^*_{\alpha \beta}(b_\alpha)$.

There is a unique element of $U^*_0$, so we let that element be $b_0$.

Suppose we have defined $b_\alpha$. Then let $b_{\alpha+1}$ be either of the two elements in $U^*_{\alpha+1}$ that restrict to $b_\alpha$.

Finally, suppose $\alpha < \omega_1$ is a limit, and we have defined $b_\beta$ for all $\beta < \omega_1$. Let $b \in U^*_\alpha$ be arbitrary. For each $\beta < \alpha$, let $\eta_\beta \in 2^\beta$ be the unique function with $b_\beta = \boldsymbol{\eta}_{\boldsymbol{\beta}}\pi_{\alpha \beta}(b)$. Then $\eta_\beta \subseteq \eta_\gamma$ for $\beta \leq \gamma < \alpha$. Define $\eta = \bigcup_{\beta < \alpha} \eta_\beta$, and define $b_\alpha = \boldsymbol{\eta}(b)$. This works, clearly.

So we have $(b_\alpha: \alpha < \omega_1)$ as desired. For each $\alpha < \omega_1$, let $\overline{0}_\alpha \in \mathfrak{A}$ be the zero sequence of length $\alpha$. Then $f: \overline{0}_\alpha \mapsto b_\alpha$ is a partial elementary map from $\mathfrak{A}$ into $\mathfrak{B}$. So $f$ extends to a partial elementary map $g$ from the definable closure of $\{\overline{0}_\alpha: \alpha < \omega_1\}$ in $\mathfrak{A}$ to the definable closure of $\{b_\alpha: \alpha < \omega_1\}$. 

But note that the definable closure of each $\overline{0}_\alpha$ contains all of $U_\alpha$, and the definable closure of each $b_\alpha$ contains all of $U_\alpha^*$. Hence $g: \mathfrak{A} \cong \mathfrak{B}$.
\end{proof}

Now we provide an axiomatization of $T$.

\begin{definition}
Let $T_0 $ consist of the consequences of the following axioms.

\begin{enumerate}[(I)]
\item Suppose $\phi(x)$ is a quantifier-free formula of $\mathcal{L}$ with only the variable $x$ free. Suppose $\mathfrak{A} \models \forall x \phi(x)$. Then $``\forall x \phi(x)"$ is an axiom.
\item $``\exists! x : U_0(x)".$
\item For all $\alpha$, $``\forall x : U_\alpha(x) \rightarrow \exists^{=2}y : (U_{\alpha+1}(y) \land \pi_{\alpha+1\,\alpha}(y) = x)"$.
\item For all $\alpha < \beta$, ``$\forall x: U_\alpha(x) \rightarrow \exists y: (U_\beta(y) \land \pi_{\beta \alpha}(y) = x)$".
\end{enumerate}
\end{definition}

Obviously $\mathfrak{A} \models T_0$.

\begin{lemma} $T_0 = T$, i.e. $T_0$ is complete.
\end{lemma}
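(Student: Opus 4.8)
The plan is to show $T_0$ is complete by proving it admits quantifier elimination down to the isolated types identified in Lemma~\ref{types1}, or more directly, by proving $T_0$ is $\aleph_0$-categorical in an appropriate relativized sense and then invoking atomicity. Concretely, I would argue that $T_0$ proves every model is atomic and that all atomic models are isomorphic, so that $T_0$ has a unique atomic model; since $T$ extends $T_0$ and $\mathfrak{A}$ is an atomic model of both, completeness of $T_0$ follows once I know $T_0$ has atomic models and they realize exactly the types isolated by the formulas $\phi_{\overline{\eta}}$.

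First I would verify that the axioms of $T_0$ force, in any model $\mathfrak{B}\models T_0$, that the universe decomposes as $B=\bigcup_{\alpha<\omega_1}U^*_\alpha$ with the $U^*_\alpha$ disjoint (this is a quantifier-free consequence of $\mathfrak{A}$'s behavior, hence covered by axiom scheme (I)), that the $\pi^*_{\alpha\beta}$ compose correctly and act as the projection system, and that the $\boldsymbol{\eta}^*$ satisfy the commutation identities recorded in the first Lemma. Axioms (II)–(IV) then guarantee that $U^*_0$ is a singleton, that each element of $U^*_\alpha$ has exactly two preimages in $U^*_{\alpha+1}$, and that the projection maps are surjective at every level. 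The key point is that scheme (I) captures \emph{all} the algebraic relations holding in $\mathfrak{A}$, in particular the mod-2 addition structure, so that any model of $T_0$ looks locally exactly like a tree of binary sequences with the same definable-closure relations.

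Next I would show that in any model $\mathfrak{B}\models T_0$ the formulas $\phi_{\overline{\eta}}$ remain isolating: the argument of Lemma~\ref{types1} is purely equational, using only the automorphisms $f_\nu$ and the commutation identities, so it transfers verbatim once those identities are known to hold in $\mathfrak{B}$ by scheme (I). Thus every finite tuple in $\mathfrak{B}$ whose entries lie in the $U^*_\alpha$ satisfies some $\phi_{\overline{\eta}}$, which means \emph{every} model of $T_0$ is atomic with respect to the type set of $\mathfrak{A}$. Then the construction in Lemma~\ref{continuumUniqueness}---building a coherent thread $(b_\alpha:\alpha<\omega_1)$ through the projection system and extending the map $\overline{0}_\alpha\mapsto b_\alpha$ to an isomorphism---applies to any $\mathfrak{B}\models T_0$, not just to atomic models of $T$, because it only uses the tree structure guaranteed by (II)–(IV). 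Hence all models of $T_0$ are isomorphic to $\mathfrak{A}$, so $T_0$ is complete and equals $T$.

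The step I expect to be the main obstacle is making precise that axiom scheme (I) genuinely suffices to pin down the full equational theory of $\mathfrak{A}$ at every finite stage, including the interaction of the addition maps $\boldsymbol{\eta}$ with projections across limit levels. One must check that the relevant identities---say $\boldsymbol{\eta}\,\boldsymbol{\tau}=\boldsymbol{\eta\oplus\tau}$ and the coherence of $\eta\restriction_\beta$ under projection---are each single universally-quantified quantifier-free sentences true in $\mathfrak{A}$, so that they are individually axioms, and that finitely many of them suffice to run the isolation argument for any fixed tuple. Once this bookkeeping is in place, the completeness argument is essentially a transfer of the two uniqueness lemmas already proved for atomic models of $T$ to arbitrary models of $T_0$.
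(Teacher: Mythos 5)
Your proposal has a fatal gap at its very first step. You claim that axiom scheme (I) forces, in \emph{any} model $\mathfrak{B}\models T_0$, that the universe decomposes as $B=\bigcup_{\alpha<\omega_1}U^*_\alpha$, and you call this ``a quantifier-free consequence of $\mathfrak{A}$'s behavior.'' It is not: the statement $\forall x\,\bigvee_{\alpha<\omega_1}U_\alpha(x)$ is an infinitary disjunction, not a first-order sentence, and no first-order theory true in $\mathfrak{A}$ can prove it. By compactness, $T_0$ (indeed $T$ itself) has models containing ``unsorted'' elements lying in no $U^*_\alpha$: add a constant $c$ with axioms $\neg U_\alpha(c)$ for each $\alpha$; any finite subset is satisfiable in $\mathfrak{A}$. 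Everything downstream of this collapses. ``Every model of $T_0$ is atomic'' is false; ``all models of $T_0$ are isomorphic to $\mathfrak{A}$'' is false outright --- it contradicts upward L\"owenheim--Skolem, and it also contradicts the very next lemma of the paper, which constructs $\mathfrak{B}\models T$ (hence $\mathfrak{B}\models T_0$, since $T_0=T$) into which $\mathfrak{A}$ does not even elementarily embed. A further, independent failure: even for sorted elements, the thread argument of Lemma~\ref{continuumUniqueness} needs that $\mathrm{dcl}(b_\alpha)\supseteq U^*_\alpha$, i.e.\ that $U^*_\alpha$ is a single orbit under the maps $\boldsymbol{\eta}^*$, $\eta\in 2^\alpha$; this too is an infinitary condition (a disjunction over continuum-many $\eta$) and fails in some models of $T$ by compactness. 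So absolute categoricity of $T_0$ is simply not available as a route to completeness.

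The paper's proof works around exactly this obstruction by localizing. Completeness of $T_0$ need only be checked one sentence at a time, and each sentence lies in a finite sublanguage; so it suffices to show that $T_0\restriction_{\mathcal{L}'}$ is complete for sufficiently rich finite fragments $\mathcal{L}'\subseteq\mathcal{L}$ (the ``template'' languages $\mathcal{L}_{\overline{G}}$, chosen closed under immediate predecessors and under the relevant group structure so that the fragment theory is sensible). Each such fragment theory is then shown to be $\aleph_0$-categorical --- and here the unsorted elements are not wished away but dealt with: the fragment theory proves there are infinitely many of them and that they are absolutely indiscernible over the rest of the model, so they do not obstruct categoricity. Completeness of each fragment then follows from the \L o\'s--Vaught test (no finite models), and completeness of $T_0$ follows. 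Your opening sentence does gesture at ``$\aleph_0$-categoricity in an appropriate relativized sense,'' which is the right instinct, but the argument you actually develop is the unrelativized one, and it cannot be repaired without switching to the fragment-by-fragment strategy.
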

\begin{proof} (Sketch.) 
It suffices to show that for sufficiently rich finite fragments $\mathcal{L}' \subseteq \mathcal{L}$, $T_0 \restriction_{\mathcal{L}'}$ is $\aleph_0$-categorical.

Temporarily define a template to be a sequence $\overline{G} = (G_\alpha: \alpha \in X)$ where:

\begin{itemize}
\item $X \subseteq \omega_1$ is finite and closed under immediate predecessors, and $0 \in X$;
\item Each $G_\alpha$ is a finite subgroup of $(2^\alpha, \oplus)$, containing the set of all $\eta \in 2^\alpha$ which are zero outside of $X$;
\item For $\beta \leq \alpha$ both in $X$, $G_\alpha \! \restriction_\beta = G_\beta$.
\end{itemize}

Given a template $\overline{G} = (G_\alpha: \alpha \in X)$, let $\mathcal{L}_{\overline{G}} \subseteq \mathcal{L}$ be defined as follows: $U_\alpha \in \mathcal{L}_{\overline{G}}$ iff $\alpha \in X$; $\pi_{\alpha \beta} \in \mathcal{L}_{\overline{G}}$ iff $\alpha, \beta \in X$; and $\boldsymbol{\eta} \in \mathcal{L}_{\overline{G}}$ iff $\lg(\eta) \in X$ and $\eta \in G_{\lg(\eta)}$. Let $T_{\overline{G}} = T_0 \restriction_{\mathcal{L}_{\overline{G}}}$.

Then it is easy to see that each $T_{\overline{G}}$ is $\aleph_0$-categorical; note for example that $T_{\overline{G}}$ proves there are infinitely many unsorted elements (i.e. elements that are not in any $U_\alpha$ for $\alpha \in X$) and that these elements are absolutely indiscernible over the rest of the model.
\end{proof}

The following lemma concludes the proof of Theorem~\ref{continuumTheorem}.

\begin{lemma}
$\mathfrak{A}$ is not a prime model of $T$.
\end{lemma}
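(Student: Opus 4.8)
The plan is to exhibit a model $\mathfrak{B} \models T$ into which $\mathfrak{A}$ does not elementarily embed; since $\mathfrak{A}$ is (up to isomorphism) the unique atomic model and atomic models are exactly the candidates for primeness, showing that $\mathfrak{A}$ fails to embed into some $\mathfrak{B} \models T$ immediately shows $\mathfrak{A}$ is not prime. Concretely, as the paragraph following Theorem~\ref{continuumTheorem} announces, I would build $\mathfrak{B}$ so that it \emph{omits} the type $p(\overline{x}) = tp_{\mathfrak{A}}(\overline{0}_\alpha : \alpha < \omega_1)$ of the coherent zero-tower. Any elementary embedding $f : \mathfrak{A} \to \mathfrak{B}$ would send $(\overline{0}_\alpha : \alpha < \omega_1)$ to a coherent tower $(f(\overline{0}_\alpha) : \alpha < \omega_1)$ realizing $p$ in $\mathfrak{B}$, so omitting $p$ blocks all embeddings.

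The key steps are as follows. First I would describe what a realization of $p$ really is: by the observations in the first lemma (the $\pi$'s commute and compose correctly), a realization of $p$ amounts to a \emph{thread}, i.e. a sequence $(b_\alpha : \alpha < \omega_1)$ with $b_\alpha \in U_\alpha^{\mathfrak{B}}$ and $\pi_{\beta\alpha}^{\mathfrak{B}}(b_\beta) = b_\alpha$ for $\alpha \le \beta$, that moreover sits in the ``zero" fiber in the precise definable sense recorded by the type (using that $\overline{0}_\alpha$ is fixed by $\boldsymbol{\overline{0}}$ and characterized relative to the group structure). So the task reduces to the purely combinatorial one: construct a model of the axioms $T_0$ in which the inverse system of the $U_\alpha^{\mathfrak{B}}$ under the $\pi^{\mathfrak{B}}_{\beta\alpha}$ has \emph{no} coherent $\omega_1$-branch landing in the zero fiber. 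Second, I would verify that the candidate $\mathfrak{B}$ satisfies $T_0 = T$; the completeness lemma gives $T_0 = T$, so it suffices to check $\mathfrak{B}$ models the explicit axioms (I)--(IV), in particular the surjectivity/doubling conditions (III) and (IV) that govern the projection maps. Third, I would confirm omission.

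For the construction itself I would exploit that the projection system is, at each limit stage, an inverse limit of finite sets (each $U_\alpha^{\mathfrak{B}}$ a torsor-like fiber), and build $\mathfrak{B}$ along $\alpha < \omega_1$ stage by stage so that at each limit ordinal $\alpha$ I deliberately refrain from filling in the coherent zero-branch: concretely, when assembling $U_\alpha^{\mathfrak{B}}$ as (a subset of) the inverse limit of $(U_\beta^{\mathfrak{B}} : \beta < \alpha)$, I include every branch \emph{except} the one that would extend the all-zero thread, while still satisfying axiom (IV) by ensuring each element of each $U_\beta^{\mathfrak{B}}$ has some preimage in $U_\alpha^{\mathfrak{B}}$ (one can always route through a non-zero branch since the fibers double at successors). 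The axioms are local and first-order, so they only constrain finitely many coordinates at a time and are easy to maintain; the global coherence condition defining $p$ is the genuinely infinitary requirement, which is exactly what the construction sabotages.

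The main obstacle I anticipate is the tension between axiom (IV) (every element of $U_\alpha$ extends to $U_\beta$ for every $\beta > \alpha$) and the goal of deleting the zero-thread: I must make sure that omitting the coherent zero-branch does not accidentally strand some element of a lower $U_\beta^{\mathfrak{B}}$ with no preimage, and does not force the zero-branch back in through the composition laws $\pi_{\beta\gamma}\pi_{\alpha\beta} = \pi_{\alpha\gamma}$. The doubling in axiom (III) gives enough room to always reroute preimages through nonzero coordinates, so this should be arrangeable, but the bookkeeping at limit stages — guaranteeing $T_0$ holds while the single distinguished thread never closes up — is where the real care lies, and I would organize it as an explicit transfinite recursion verifying (I)--(IV) and the omission of $p$ at each step.
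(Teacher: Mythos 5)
Your high-level plan is the same as the paper's: produce $\mathfrak{B}\models T_0=T$ omitting $p=tp_{\mathfrak{A}}(\overline{0}_\alpha:\alpha<\omega_1)$, check the explicit axioms, and note that an elementary embedding would transport the zero tower to a realization of $p$. But the construction you describe cannot be carried out, for two independent reasons. First, omitting $p$ is equivalent to having \emph{no} coherent $\omega_1$-thread at all, not merely no ``zero'' thread: there is no definable zero fiber (note that $\boldsymbol{\overline{0}_\alpha}$ acts as the identity on $U_\alpha$), and by the completeness of the formulas in Lemma~\ref{types1} any sequence $(b_\alpha)_{\alpha<\omega_1}$ with $b_\alpha\in U_\alpha^{\mathfrak{B}}$ and $\pi_{\beta\alpha}(b_\beta)=b_\alpha$ realizes $p$; the map $\overline{0}_\alpha\mapsto b_\alpha$ is then partial elementary and extends over definable closures to an elementary embedding of $\mathfrak{A}$, exactly as in Lemma~\ref{continuumUniqueness}. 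Since you delete only one branch per limit stage, the thread which is $1$ at coordinate $0$ and $0$ elsewhere is never ``the all-zero branch'' at any stage, so it survives to a full coherent thread and $\mathfrak{A}$ embeds after all. Second, and more structurally, ``every branch except one'' is not even a model of the axioms: Schema (I) includes every universally quantified quantifier-free sentence true in $\mathfrak{A}$, and these force each $U_\alpha^{\mathfrak{B}}$ to be closed under the free action of the group $(2^\alpha,\oplus)$ via the $\boldsymbol{\eta}$'s (closure, involutivity, and the composition law are all quantifier-free). If the elements of a limit fiber are branches with the natural action, then containing any branch $\tau$ forces containing $\boldsymbol{\nu}(\tau)=\nu\oplus\tau$ for every $\nu\in 2^\alpha$ --- in particular $\boldsymbol{\tau}(\tau)=\overline{0}_\alpha$ reappears. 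So a fiber consisting of branches is all of $2^\alpha$ or empty, and Axiom (IV) rules out empty; the successor-level ``doubling'' gives no room to reroute, because the obstruction is orbit-closure, not counting.

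The idea your proposal is missing --- and the heart of the paper's proof --- is that elements of $\mathfrak{B}$ must carry strictly more information than their projections. The paper takes pairs $(\tau,s)$ where $s$ is a finite increasing sequence of ordinals with $s(|s|-2)\leq \lg(\tau)<s(|s|-1)$, lets $\boldsymbol{\eta}^*$ act only on the first coordinate (so the action is free and orbit-closure is harmless: orbits are parametrized by the tags $s$), and lets the projections truncate the tag. Axiom (IV) holds because any $(\tau,s)$ extends upward by lengthening $\tau$ and appending larger limit ordinals to $s$; yet along any coherent $\omega_1$-thread the tags form a nested increasing $\omega_1$-chain of finite sequences, hence stabilize at some fixed $s$, after which the requirement $\lg(\tau)<s(|s|-1)$ fails once the level passes $s(|s|-1)$. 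Thus no coherent thread exists, every elementary embedding of $\mathfrak{A}$ is blocked, and non-primeness follows. Without some device of this kind, whose finitary bookkeeping eventually clashes with $\omega_1$-coherence, the stage-by-stage plan of ``refraining from filling in a branch'' cannot produce a model of $T$ at all.
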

\begin{proof}
We define a model $\mathfrak{B} = (B, U^*_\alpha, \pi^*_{\alpha\beta}, \boldsymbol{\eta}^*: \eta \in 2^{<\omega_1}, \beta \leq \alpha < \omega_1) \models T$ into which $\mathfrak{A}$ does not embed.

\begin{itemize}
\item Let $B$ be the set of all pairs $(\tau, s)$ where:

\begin{itemize}
\item $\tau \in 2^{<\omega_1}$;
\item $s \in \omega_1^{<\omega}$ is a finite, strictly increasing sequence of ordinals, with $|s| \geq 2$;
\item $s(0) = 0$, $s(1) = \omega$, and for all $n \geq 1$, $s(n)$ is a limit ordinal;
\item $s(|s|-2) \leq \lg(\tau) < s(|s|-1)$.
\end{itemize}
\item Suppose $(\tau, s) \in B$. Then let $U^*_\alpha(\tau, s)$ hold iff $\tau \in 2^\alpha$.
\item Suppose $(\tau, s) \in U^*_\alpha$ and $\eta \in 2^\alpha$. Then let $\boldsymbol{\eta}^*(\tau, s) = (\eta \oplus \tau, s)$.
\item Suppose $(\tau, s) \in U^*_\alpha$ and $\beta \leq \alpha$. Let $n$ be such that $s(n-2) \leq \beta < s(n-1)$. Let $\pi^*_{\alpha\beta}(\tau, s) = (\tau \restriction_\beta, s \restriction_n)$.

\end{itemize}

It is routine to check that $\mathfrak{B}$ is a model of Axiom Schemas II-IV. To check Axiom Schema I: suppose $\mathfrak{B} \models \exists x \phi(x)$, where $\phi(x)$ is a quantifier-free $\mathcal{L}$-formula. Say $\mathfrak{B} \models \phi(\eta, s)$. Let $A_0$ be the definable closure of $\eta$ in $\mathfrak{A}$ (i.e., all $\tau \in \mathfrak{A}$ with $\lg(\tau) \leq \lg(\eta)$) and let $B_0$ be the definable closure of $(\eta, s)$ in $\mathfrak{B}$ (i.e., all $(\tau, t) \in \mathfrak{B}$ with $\lg(\tau) \leq \lg(\eta)$ and $t \subseteq s$). Then the map $\Phi: B_0 \to A_0$ taking $(\tau, t)$ to $\tau$ is a partial isomorphism from $B_0$ onto $A_0$. Hence $\mathfrak{A} \models \phi(\eta)$, so $\mathfrak{A} \models \exists x \phi(x)$.

So $\mathfrak{B} \models T$. Suppose towards a contradiction that $f: \mathfrak{A} \to \mathfrak{B}$ were an elementary embedding. Let $\overline{0}_\alpha$ be the zero sequence of length $\alpha$ in $\mathfrak{A}$, for each $\alpha < \omega_1$; and let $(\eta_\alpha, s_\alpha) = f(\overline{0}_\alpha)$. Then we have for all $\alpha < \beta$, $\pi^*_{\beta \alpha}(\eta_\beta, s_\beta) = (\eta_\alpha, s_\alpha)$. In particular, for all $\alpha < \beta$, $s_\alpha \subseteq s_\beta$. 

Hence $(s_\alpha : \alpha < \omega_1)$ eventually stabilizes; say $s_\alpha = s_\beta = s$ for all $\alpha, \beta \geq \alpha_0$. Let $\alpha_1 = \max(s(|s|-1), \alpha_0)$. Then $\lg(\eta_{\alpha_1}) \geq s_\alpha(|s_\alpha|-1)$, contradicting the definition of $B$.
\end{proof}

%%%%%%%%%%%%%%%%%%%%%%%%%%
\section{Unique Atomic Models that are Prime but not Constructible}
\label{UniformizationExample}
%%%%%%%%%%%%%%%%%%

In this section, I show the following:

\begin{theorem}
\label{unifTheorem}
Third Example: Suppose for some stationary $S \subset \Lambda$, some ladder system $(L_\alpha: \alpha \in S)$ has the uniformization property. Then from this ladder system we can define a theory $T$ in a language $\mathcal{L}$ of size $\aleph_1$, such that $T$ has a unique atomic model, which is additionally prime, yet which is not constructible.
\end{theorem}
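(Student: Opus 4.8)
The plan is to enrich the tree structure of the First Example with symbols that encode the ladder system, so that the forced branch-limits are governed precisely by the uniformization property. Concretely, I would keep the unary predicates $U_\alpha$ (interpreted as a set of ``threads'' of length $\alpha$), the projections $\pi_{\alpha\beta}$, and the pointwise-translation symbols $\boldsymbol{\eta}$ from Section~\ref{ContinuumExample}, and then add, for each $\alpha \in S$, function symbols tying an element of $U_\alpha$ to the values it takes along its ladder $L_\alpha = \{\beta^\alpha_n : n < \omega\}$. The intended model $\mathfrak{A}$ is the structure in which every coherent thread below a limit $\alpha \in S$ has a genuine limit in $U_\alpha$, the limit being determined up to finitely many coordinates by the ladder data; the language has size $\aleph_1$ and $T := Th(\mathfrak{A})$. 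As in the First Example I would first give an explicit axiomatization $T_0$ (branching axioms plus the universal quantifier-free consequences of $\mathfrak{A}$, now including the ladder-coherence axioms) and check $T_0 = T$ by showing each finite fragment is $\aleph_0$-categorical.

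With the theory in hand, the uniqueness and atomicity of $\mathfrak{A}$ should go exactly as in Lemma~\ref{types1} and Lemma~\ref{continuumUniqueness}: finite tuples have their types isolated by explicit defining terms (each coordinate is a fixed $\mathcal{L}$-term applied to the longest thread), so $\mathfrak{A}$ is atomic, these formulas exhaust the isolated types, and any two atomic models are isomorphic by a level-by-level recursion. The only new point is that at levels $\alpha \in S$ the recursion must respect the ladder-coherence axioms, but these pin the relevant value down (up to the translations $\boldsymbol{\eta}$), so the isomorphism extends uniquely.

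The heart of the argument is primeness. Given any $\mathfrak{B} \models T$, I would build an elementary embedding $\mathfrak{A} \hookrightarrow \mathfrak{B}$ by recursion on $\alpha < \omega_1$, choosing images of a fixed spine $(\overline{0}_\alpha : \alpha < \omega_1)$. Successor stages and limit stages with $\alpha \notin S$ are routine extensions of a partial elementary map. At a stage $\alpha \in S$ the images already chosen along $L_\alpha$ determine a function $f_\alpha : L_\alpha \to 2$ recording the ``error'' between the thread we are tracking and the branch realized in $\mathfrak{B}$; to choose an image of the limit element in $U_\alpha^{\mathfrak{B}}$ I need a single $f : \omega_1 \to 2$ agreeing with each $f_\alpha$ off a finite set, and this is exactly what the uniformization property of $(L_\alpha : \alpha \in S)$ provides. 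The main obstacle is the bookkeeping that makes this clean: one must arrange the encoding so that the finite disagreement permitted by uniformization is precisely the ambiguity that the translation symbols $\boldsymbol{\eta}$ can absorb, and then verify that the resulting total map is elementary, which I would reduce to a back-and-forth check on the finite fragments $\mathcal{L}_{\overline{G}}$ used to prove completeness.

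Finally, non-constructibility comes from the stationarity of $S$, dually to Theorem~\ref{split}(3). Suppose $\mathfrak{A}$ had a construction sequence; by Ressayre's Theorem~\ref{constructibleModels} take it of order type $\omega_1$. There is a club $C$ of ordinals $\alpha$ at which the enumerated initial segment is exactly the substructure on levels $< \alpha$. Fixing $\alpha \in S \cap C$ (possible since $S$ is stationary), the next enumerated element lying at level $\geq \alpha$ must have an isolated type over that initial segment; but the ladder-coherence structure forces the type of any element of $U_\alpha$ over the levels below $\alpha$ to record the infinitely many independent choices indexed by $L_\alpha$, which no single $\mathcal{L}$-formula --- mentioning only finitely many ladder coordinates --- can isolate. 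This contradiction shows $\mathfrak{A}$ is not constructible, completing the proof.
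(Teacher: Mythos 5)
Your proposal correctly identifies the three ingredients (a translation-tree structure, uniformization for primeness/uniqueness, stationarity for non-constructibility), but it has a genuine gap at the central design point: the structure is never actually defined, and the design you sketch cannot work. You propose to keep the height-$\omega_1$ tree of Section~\ref{ContinuumExample}, with its predicates $U_\alpha$ ($\alpha < \omega_1$), and to demand that at each limit $\alpha \in S$ every coherent thread below $\alpha$ has a genuine limit in $U_\alpha$. But below any countable limit ordinal there are $2^{\aleph_0}$ coherent sequences of finite-support threads (one for each subset of $\alpha$ that is finite or a cofinal $\omega$-sequence), so this clause forces $|U_\alpha| = 2^{\aleph_0}$; and then atomicity forces the language back up to size continuum. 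Indeed, if $\eta, \tau \in U_\alpha$ have a difference $\eta \oplus \tau$ with infinite support and the language contains only (say) finite-support translation symbols, then $tp(\eta,\tau)$ is not isolated: any single formula mentions only finitely many levels and translations, hence is also satisfied by a pair whose difference has finite support, and such pairs are not automorphic to $(\eta,\tau)$ because automorphisms preserve differences of same-level elements (check level by level that $\sigma(x) \oplus \sigma(y) = x \oplus y$). So ``language of size $\aleph_1$,'' ``all coherent threads have limits at levels in $S$,'' and ``$\mathfrak{A}$ atomic'' are jointly inconsistent; that tension is exactly what the First Example's size-continuum language was paying for. The paper's resolution is not to decorate the height-$\omega_1$ tree with ladder symbols (note also that your proposed ladder symbols are redundant: the projections already record the values along $L_\alpha$), but to replace the tree by one of height $\omega+1$: $J = J_0 \otimes J_1$, where $J_0$ consists of finite strictly increasing sequences in $\omega_1$ topped by the ladders $\nu_\beta$ ($\beta \in S$), and $J_1$ consists of the \emph{finite-support} elements of $2^{\leq\omega}$. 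Then the fiber of $\mathfrak{A}$ above each ladder is a single coset of finite-support translations, which keeps the language of size $\aleph_1$ and makes $\mathfrak{A}$ atomic (Lemma~\ref{unifTypes}), while arbitrary models realize arbitrary coset families $\mathcal{F} = (f_\alpha : \alpha \in S)$.

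Relatedly, your primeness argument does not fit the logical shape of the hypothesis: uniformization is a single global principle applied once to an entire family $(f_\alpha : \alpha \in S)$, so it cannot be invoked stage by stage inside a recursion on $\alpha < \omega_1$, and your recursion has no way of knowing at stage $\alpha$ what the later error functions will be. The paper avoids the recursion altogether: given $\mathfrak{N} \models T$, it first fixes $a_\alpha \in U^*_{\nu_\alpha}$ for \emph{every} $\alpha \in S$, reads off the whole family $\mathcal{F} = (f_\alpha)$, uses Lemma~\ref{unifTypes} to see that $(\nu_\alpha, f_\alpha) \mapsto a_\alpha$ is a partial elementary map from the coset structure $\mathfrak{A}_{\mathcal{F}}$ into $\mathfrak{N}$ extending over definable closures to all of $\mathfrak{A}_{\mathcal{F}}$, and then applies uniformization once to produce a pointwise-translation automorphism of the ambient structure $\mathfrak{M}$ carrying $\mathfrak{A}$ onto $\mathfrak{A}_{\mathcal{F}}$. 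That single lemma ($\mathfrak{A}_{\mathcal{F}} \cong \mathfrak{A}$) delivers both primeness and, since in an atomic model each ladder fiber is one coset, uniqueness of the atomic model. Your non-constructibility paragraph is essentially the paper's argument and would go through once the structure is correctly defined. In short, the ``main obstacle'' you yourself name --- arranging that the finite disagreement allowed by uniformization is exactly the ambiguity the translation symbols can absorb --- is the crux of the theorem; it is solved by the finite-support choice of $J_1$ on a tree of height $\omega+1$, and your proposal identifies it without overcoming it.
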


A small tweak (see Remark~\ref{ThirdExample} below) gives the Second Example.

The idea is to make an example similar to the first example, except we replace the tree $(2^{<\omega_1}, <)$ with a much smaller tree, in fact a tree of height $\omega+1$. (In neither example is $<$ itself part of the language.)

Fix a stationary $S \subset \Lambda$ and a ladder system $(L_\alpha: \alpha \in S)$ with the uniformization property. Let $\nu_\alpha: \omega \to L_\alpha$ be the strictly increasing enumeration.

Let $J_0$ be the set of all strictly increasing functions $\eta_0: \alpha \to \omega_1$, where $\alpha \leq \omega$, and if $\alpha = \omega$ then $\eta_0= \nu_\beta$ for some $\beta \in S$. So $J_0$ is a tree of height $\omega+1$ under $\subset$.

Let $J_1 = \{\eta_1 \in 2^{\leq \omega}: \eta_1 \mbox{ has finite support}\}$. $J_1$ is also tree of height $\omega+1$, under initial segment $\subset$.

Let $J = J_0 \otimes J_1$ be the product tree of pairs $\eta = (\eta_0, \eta_1)$, where $\eta_0 \in S_0$ and $\eta_1 \in S_1$ and $|\eta_0| = |\eta_1|$; we say $\eta \leq \tau$ if $\eta_i \leq \tau_i$ for each $i < 2$.

Given $\eta \in J$, we view $\eta$ as a sequence with domain $\alpha \leq \omega$, and write $\lg(\eta)$, $\eta \restriction_n$, etc. accordingly. If $\eta, \tau, \ldots \in J$ then always $\eta = (\eta_0, \eta_1)$, $\tau = (\tau_0, \tau_1), \ldots$.

Let $\mathcal{L}$ be the language $\{U_{\eta_0}, \pi_\alpha, \boldsymbol{\eta}: \eta_0 \in J_0, \eta \in J, \alpha \leq \omega\}$, where each $U_{\eta_0}$ is a unary predicate, and each $\pi_\alpha, \boldsymbol{\eta}$ are unary function symbols. $\boldsymbol{\eta}$ will be a map $U_{\eta_0} \to U_{\eta_0}$.

We turn $J$ into a $\mathcal{L}$-structure $\mathfrak{A}$ as follows. Let $U_{\eta_0} = \{\tau \in J: \tau_0 = \eta_0\}$. Given $\eta \in J$ and $\alpha \leq \omega$, let $\pi_\alpha(\eta) = \eta \restriction_{\alpha}$ (so $\pi_\omega$ is the identity map). Finally, given $\eta, \tau \in J$ with $\eta_0 = \tau_0$, define $\boldsymbol{\eta} \tau = (\eta_0, \eta_1 \oplus \tau_1 \bmod 2)$.

Let $T$ be the complete theory of $\mathfrak{A}$. The claim is that this works.

Given $\eta_0, \eta_1 \in J_0$, then let $d(\eta_0, \eta_1)$ be the greatest $\alpha \leq \omega$ such that $\alpha \leq \lg(\eta_0)$ and $\alpha \leq \lg(\eta_1)$ and $\eta_0 \restriction_\alpha = \eta_1 \restriction_\alpha$.
\begin{lemma}
\label{unifTypes}

Let $\overline{\eta} = (\eta^0, \eta^1, \ldots, \eta^{n-1})$ be a tuple from $J$. For each $i, j$ let $\alpha_{ij} = d(\eta^i_0, \eta^j_0)$. Let $\phi_{\overline{\eta}}(x_0, \ldots, x_{n-1})$ be the following formula:

$$\bigwedge_{i < n} U_{\eta^i_0}(x_i) \,\, \land \,\, \bigwedge_{i \not= j < n} \pi_{\alpha_{ij}} (\boldsymbol{\eta^i} x_i) = \pi_{\alpha_{ij}} (\boldsymbol{\eta^j} x_j).$$

Then $\mathfrak{A} \models \phi_{\overline{\eta}}(\overline{\eta})$ and moreover $\phi_{\overline{\eta}}(\overline{x})$ is complete.
\end{lemma}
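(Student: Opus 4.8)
The plan is to prove Lemma~\ref{unifTypes} in two stages: first verify that $\mathfrak{A} \models \phi_{\overline{\eta}}(\overline{\eta})$, which is essentially a definitional unwinding, and then show completeness by producing, for any tuple $\overline{\tau}$ satisfying $\phi_{\overline{\eta}}$, an automorphism of $\mathfrak{A}$ carrying $\overline{\eta}$ to $\overline{\tau}$. As in the First Example (compare Lemma~\ref{types1}), the natural engine for completeness is the automorphism group coming from pointwise mod-$2$ addition in the $J_1$-coordinate: for a suitable $\nu \in 2^{\omega_1}$ (or rather a family of compatible shifts, one on each fibre $U_{\eta_0^i}$), the map $f_\nu$ shifting the second coordinate should be an automorphism fixing the $U_{\eta_0}$ and $\pi_\alpha$ structure while moving $\eta^i$ to $\tau^i$.

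For the first part, I would simply check each conjunct. The conjunct $U_{\eta^i_0}(\eta^i)$ holds by definition of the interpretation of $U_{\eta^i_0}$. For the cross terms, note that $\boldsymbol{\eta^i}\eta^i = (\eta_0^i, \eta_1^i \oplus \eta_1^i) = (\eta_0^i, \overline{0})$, i.e. applying $\boldsymbol{\eta^i}$ zeroes out the $J_1$-coordinate and leaves the $J_0$-coordinate $\eta_0^i$ untouched; then $\pi_{\alpha_{ij}}$ truncates to length $\alpha_{ij} = d(\eta_0^i, \eta_0^j)$. Since by definition of $d$ the $J_0$-parts $\eta_0^i$ and $\eta_0^j$ agree on the first $\alpha_{ij}$ coordinates, both sides equal $(\eta_0^i \restriction_{\alpha_{ij}}, \overline{0})$, so the equation holds.

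For completeness, suppose $\mathfrak{A} \models \phi_{\overline{\eta}}(\overline{\tau})$. From $U_{\eta^i_0}(\tau^i)$ we get $\tau^i_0 = \eta^i_0$, so each $\tau^i$ lives in the same fibre as $\eta^i$ and differs only in the $J_1$-coordinate. The idea is to define a global shift: for each $\eta^i$ set $\sigma^i := \tau^i_1 \oplus \eta^i_1 \in 2^{\lg(\eta^i)}$ (the amount we must add to $\eta^i_1$ to reach $\tau^i_1$), and build an automorphism $f$ of $\mathfrak{A}$ that, restricted to each $U_{\eta_0}$, adds a chosen element of $2^{\lg(\eta_0)}$ in the $J_1$-coordinate. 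The constraint is compatibility with $\pi_\alpha$: the shifts chosen on the various fibres must cohere under restriction, which is exactly where the hypothesis $\pi_{\alpha_{ij}}(\boldsymbol{\eta^i}\tau^i) = \pi_{\alpha_{ij}}(\boldsymbol{\eta^j}\tau^j)$ enters — it guarantees $\sigma^i \restriction_{\alpha_{ij}} = \sigma^j \restriction_{\alpha_{ij}}$, so a single consistent shift can be extracted along the branches hit by $\overline{\eta}$ and extended by zero elsewhere. Concretely I would choose, for each $\eta_0 \in J_0$, a value $g(\eta_0) \in 2^{\lg(\eta_0)}$ agreeing with the $\sigma^i$ wherever $\eta_0$ meets the relevant branch and zero otherwise, arranged to be coherent under restriction, and let $f$ add $g(\tau_0)$ in the second coordinate of each $\tau \in U_{\tau_0}$; one checks $f$ commutes with every $\pi_\alpha$ and every $\boldsymbol{\eta}$ and hence is an automorphism with $f(\eta^i) = \tau^i$.

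The main obstacle is the coherence of the shift family across fibres at limit length $\omega$. The finite-support condition on $J_1$ means each $\sigma^i$ has finite support, so extending by zero and gluing along finitely many branches poses no set-theoretic difficulty; the delicate point is verifying that the single equation $\pi_{\alpha_{ij}}(\boldsymbol{\eta^i}\tau^i) = \pi_{\alpha_{ij}}(\boldsymbol{\eta^j}\tau^j)$ for each pair really does force enough agreement to define $g$ consistently on the whole tree $J_0$, including ensuring that when two branches $\eta_0^i, \eta_0^j$ split exactly at $\alpha_{ij}$ the prescribed shifts do not conflict below the split. I expect this to reduce, as in the First Example, to the observation that the shifts are determined only along the finitely many branches named by $\overline{\eta}$ and are free (hence settable to zero) elsewhere, so no genuine conflict arises; the bookkeeping of the $d(\eta_0^i,\eta_0^j)$ values is the only place demanding care.
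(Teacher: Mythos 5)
Your proposal is correct, and it uses the same underlying mechanism as the paper --- completeness is witnessed by automorphisms of $\mathfrak{A}$ that add a branch-supported parity shift in the $J_1$-coordinate --- but it organizes the argument differently. The paper proceeds by induction on $n$: after applying the inductive automorphism it only ever needs to move the \emph{last} point $\eta^n$ to $\tau^n$ while fixing the others, so its shift is supported along the single branch $\eta^n_0$ (adding $\eta^n_1 \oplus \tau^n_1$ below $d(\eta_0,\eta^n_0)$ on each fibre), and the gluing problem you worry about never arises; the only thing to check is that the earlier points are fixed, which follows from the observation that each $\alpha_{in}$ is at most the first coordinate where $\eta^n_1$ and $\tau^n_1$ differ. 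You instead build one automorphism moving all $n$ points simultaneously, which forces you to verify well-definedness of the glued shift $g$; this does go through, since if $k < d(\eta_0,\eta^i_0)$ and $k < d(\eta_0,\eta^j_0)$ then $k < \alpha_{ij}$, and the $(i,j)$-clause of $\phi_{\overline{\eta}}$ applied to $\overline{\tau}$ gives exactly $\sigma^i \restriction_{\alpha_{ij}} = \sigma^j \restriction_{\alpha_{ij}}$ (both sides of the clause have first coordinate $\eta^i_0\restriction_{\alpha_{ij}} = \eta^j_0\restriction_{\alpha_{ij}}$, so the equation lives entirely in the second coordinate). So your ``delicate point'' is genuine but resolvable exactly as you predict, and coherence under restriction follows from $d(\eta_0\restriction_\alpha, \eta^i_0) = \min(\alpha, d(\eta_0,\eta^i_0))$. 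The trade-off: the paper's induction is bookkeeping-free at each step but requires composing automorphisms; your one-shot construction is more symmetric and makes the coherence structure explicit, at the cost of the well-definedness check.
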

\begin{proof}
It is clear that $\mathfrak{A} \models \phi_{\overline{\eta}}(\overline{\eta})$. We show that the formula is complete by defining, for each pair $\overline{\eta}, \overline{\tau}$ with $\mathfrak{A} \models \phi_{\overline{\eta}}(\overline{\tau})$, an automorphism $\sigma_{\overline{\eta}, \overline{\tau}} : \mathfrak{A} \cong \mathfrak{A}$ taking $\overline{\eta}$ to $\overline{\tau}$. We do this inductively on $n = |\overline{\eta}|$.

For $n = 0$ define $\sigma_{\emptyset, \emptyset} = \mbox{id}_{\mathfrak{A}}$.

Suppose we have defined $\sigma_{\overline{\eta}, \overline{\tau}}$ for all $|\overline{\eta}|, |\overline{\tau}| \leq n$. Let $\overline{\eta} = (\eta^0, \ldots, \eta^n)$ be given, and suppose $\mathfrak{A} \models \phi_{\overline{\eta}}(\overline{\tau})$. We can suppose, by applying $\sigma_{(\eta^0, \ldots, \eta^{n-1}), (\tau^0, \ldots, \tau^{n-1})}^{-1}$ to $\overline{\tau}$, that $\eta_i = \tau_i$ for each $i < n$. So we want to find some $\sigma: \mathfrak{A} \cong \mathfrak{A}$ such that $\sigma(\eta^i) = \eta^i$ for each $i < n$, and $\sigma(\eta^n) = \tau^n$.

Let $\alpha_{ij}$ be as in the definition of $\phi_{\overline{\eta}}(\overline{x})$: $\alpha_{ij} = d(\eta^i_0, \eta^j_0)$.

We know that $\eta^n_0 = \tau^n_0$. If $\eta^n_1 = \tau^n_1$ then we are done, so suppose $\eta^n_1 \not= \tau^n_1$. Let $m < \omega$ be the least value at which they differ (so $m$ is greatest such that $\eta^n_1 \restriction_m = \tau^n_1 \restriction_m$).

Then for each $i < n$, $\alpha_{i \, n} \leq m$, since by the $(i, n)$ clause of $\phi_{\overline{\eta}}$ we have that \\$(\eta^n_1 \oplus \eta^n_1) \restriction_{\alpha_{i\,n}} = (\eta^i_1 \oplus \eta^i_1) \restriction_{\alpha_{i \, n}} = (\eta^n_1 \oplus \tau^n_1) \restriction_{\alpha_{i\,n}}$.

Define $\sigma$ as follows: suppose $\eta \in \mathfrak{A}$. Then $\sigma(\eta) = \tau$ where $\tau_0 = \eta_0$, where $\tau_1(k) = \eta_1(k) +\eta^n_1(k) + \tau^n_1(k)  \bmod 2$ for $k < d(\eta_0, \eta^n_0)$, and $\tau_1(k) = \eta_1(k)$ for $k \geq d(\eta_0, \eta^n_0)$. Then it is simple to check that $\sigma$ has the desired properties.
\end{proof}

From this it is clear that the algebraic closure of the emptyset $acl(\emptyset)^\mathfrak{A}$ is just $\{\eta \in J: \lg(\eta) < \omega\}$. Denote this set as $X$.

We define an auxilary $\mathcal{L}$-structure $\mathfrak{M} = (\mathbf{J}, U_{\eta_0}, \pi_\alpha, \boldsymbol{\eta}: \eta \in J, \alpha \leq \omega)$ similarly to $\mathfrak{A}$: namely $\mathbf{J} = J_0 \otimes 2^{\leq \omega}$, with the natural operations. So $\mathfrak{A}$ is a substructure of $\mathfrak{M}$.

In fact $\mathfrak{A} \preceq \mathfrak{M}$ but we won't need this.

Given a sequence $\mathcal{F} = (f_\alpha: \alpha \in S)$, where each $f_\alpha \in 2^\omega$, define $J_{\mathcal{F}}$ to be be $X$, together with all pairs $(\eta_0, \eta_1) \in \mathbf{J}$ where $\eta_0 = \nu_\alpha$ is the canonical enumeration of $L_\alpha $ (defined at the beginning of the section) and where $\eta_1$ differs only finitely often from $f_\alpha$. Define $\mathfrak{A}_{\mathcal{F}}$ to be the substructure of $\mathfrak{M}$ with domain $J_{\mathcal{F}}$.

Note that $\mathfrak{A} = \mathfrak{A}_{(\overline{0}: \alpha \in S)}$.
\begin{lemma}
Each $\mathfrak{A}_{\mathcal{F}} \cong \mathfrak{A}$.
\end{lemma}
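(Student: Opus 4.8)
The plan is to exploit the uniformization property to build a single automorphism of the ambient structure $\mathfrak{M}$ that carries $\mathfrak{A}_{\mathcal{F}}$ isomorphically onto $\mathfrak{A}$. The naive attempt --- fix the finite part $X$ pointwise and send $(\nu_\alpha, \eta_1)$ to $(\nu_\alpha, \eta_1 \oplus f_\alpha)$ --- fails, because $f_\alpha$ typically has infinite support, so such a map does not commute with the projections $\pi_n$ into the finite levels (the finite restrictions would no longer agree). Removing exactly this obstruction is what the global uniformizing function is for, and it is also why one must work inside $\mathfrak{M}$, whose second coordinates range over all of $2^{\leq \omega}$ rather than just the finite-support elements.

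First I would feed $\mathcal{F}$ into the uniformization property. Viewing each $f_\alpha \in 2^\omega$ as a function on $L_\alpha$ via the enumeration $\nu_\alpha$ (set $g_\alpha(\nu_\alpha(n)) = f_\alpha(n)$), uniformization produces a global $f : \omega_1 \to 2$ such that for every $\alpha \in S$ the set $\{n < \omega : f_\alpha(n) \neq f(\nu_\alpha(n))\}$ is finite; that is, writing $\hat{f}_\alpha := f \circ \nu_\alpha$, each $\hat{f}_\alpha$ differs only finitely from $f_\alpha$. Using $f$ I would then define a ``twist'' $\Sigma : \mathbf{J} \to \mathbf{J}$ by $\Sigma(\eta_0, \eta_1) = (\eta_0, \eta_1 \oplus (f \circ \eta_0))$, where $f \circ \eta_0 \in 2^{\lg(\eta_0)}$ is literal composition. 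Since $\oplus$ is pointwise mod $2$, $\Sigma$ is an involution and hence a bijection of $\mathbf{J}$; it fixes each predicate $U_{\eta_0}$ (the first coordinate is untouched) and commutes with every $\pi_\alpha$ and every $\boldsymbol{\eta}$ --- the only point to verify is $(f \circ \eta_0) \restriction_\alpha = f \circ (\eta_0 \restriction_\alpha)$, which is immediate. Thus $\Sigma$ is an automorphism of $\mathfrak{M}$.

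Finally I would check that $\Sigma$ carries the domain $J_{\mathcal{F}}$ exactly onto $J$; then, being an automorphism of $\mathfrak{M}$, its restriction is the desired isomorphism $\mathfrak{A}_{\mathcal{F}} \cong \mathfrak{A}$. On finite elements $\Sigma$ is an involution of $X$, so $X$ is preserved. On an $\omega$-level element $(\nu_\alpha, \eta_1)$ with $\alpha \in S$ we have $\Sigma(\nu_\alpha, \eta_1) = (\nu_\alpha, \eta_1 \oplus \hat{f}_\alpha)$. If $(\nu_\alpha, \eta_1) \in J_{\mathcal{F}}$ then $\eta_1$ differs finitely from $f_\alpha$, and since $\hat{f}_\alpha$ also differs finitely from $f_\alpha$, the sum $\eta_1 \oplus \hat{f}_\alpha$ has finite support, placing $\Sigma(\nu_\alpha, \eta_1)$ in $J$; conversely, if $\eta_1$ has finite support then $\eta_1 \oplus \hat{f}_\alpha$ differs finitely from $f_\alpha$, placing $\Sigma(\nu_\alpha, \eta_1)$ back in $J_{\mathcal{F}}$. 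Hence $\Sigma(J_{\mathcal{F}}) = J$, as required.

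The main obstacle is precisely the bookkeeping in this last step: matching the ``finite support'' condition defining $J$ against the ``differs finitely from $f_\alpha$'' condition defining $J_{\mathcal{F}}$, mediated by the twist through $\hat{f}_\alpha$. Everything hinges on the fact that $\hat{f}_\alpha = f \circ \nu_\alpha$ agrees with $f_\alpha$ up to a finite set, which is exactly the content extracted from the uniformization property; and on the auxiliary structure $\mathfrak{M}$, since the intermediate twist $f \circ \eta_0$ may have infinite support and so leaves $\mathfrak{A}$ itself.
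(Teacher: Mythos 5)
Your proposal is correct and takes essentially the same route as the paper: both feed the functions $s_\alpha(\nu_\alpha(n)) = f_\alpha(n)$ into the uniformization property to obtain a global $s : \omega_1 \to 2$, then twist the second coordinate by $\eta_1 \mapsto \eta_1 \oplus (s \circ \eta_0)$ to get an automorphism of the auxiliary structure $\mathfrak{M}$ carrying one copy onto the other (since the twist is an involution, the direction $\mathfrak{A} \to \mathfrak{A}_{\mathcal{F}}$ versus $\mathfrak{A}_{\mathcal{F}} \to \mathfrak{A}$ is immaterial). Your write-up simply makes explicit the commutation with $\pi_\alpha$ and $\boldsymbol{\eta}$ and the finite-support bookkeeping that the paper compresses into the word ``clearly.''
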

\begin{proof}

Fix $\mathcal{F} = (f_\alpha: \alpha \in S)$. Define $s_\alpha: L_\alpha \to 2$ by $s_\alpha(\nu_\alpha(n)) = f_\alpha(n)$. By the uniformization property we can choose some $s: \omega_1 \to 2$ such that $s$ differs from each $s_\alpha$ only finitely often. Define $\sigma: \mathfrak{M} \cong \mathfrak{M}$ by $\sigma(\eta_0, \eta_1) = (\tau_0, \tau_1)$, where $\tau_0 = \eta_0$ and where $\tau_1(n) = \eta_1(n) + s(\eta_0(n)) \bmod 2$, for each $n < \lg(\eta)$. 

Then $\sigma$ is clearly an automorphism of $\mathfrak{M}$, and moreover restricts to an isomorphism from $\mathfrak{A}$ to $\mathfrak{A}_{\mathcal{F}}$.
\end{proof}

\begin{lemma}
$\mathfrak{A}$ is the unique atomic model of $T$, and is furthermore prime.
\end{lemma}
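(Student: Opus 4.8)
The plan is to reduce both claims to the description of complete types in Lemma~\ref{unifTypes} together with the isomorphisms $\mathfrak{A}_{\mathcal{F}} \cong \mathfrak{A}$ just established. The crucial structural observation concerns the fibers $U_{\nu_\alpha}$ for $\alpha \in S$. Since every $\phi_{\overline{\eta}}$ is complete and isolated types are never omitted, the complete isolated types of $T$ are exactly the $\phi_{\overline\eta}$, $\overline\eta \in J$. For a single variable this type is merely $U_{\eta_0}(x)$, so all elements of a fiber $U_{\nu_\alpha}$ look alike; but for a pair $b, b'$ both in $U_{\nu_\alpha}^{\mathfrak{B}}$, since $d(\nu_\alpha, \nu_\alpha) = \omega$ and $\pi_\omega$ is the identity, the only possible isolated $2$-types $\phi_{((\nu_\alpha,\eta^0_1),(\nu_\alpha,\eta^1_1))}$ assert $b' = \boldsymbol{(\nu_\alpha, \eta^0_1 \oplus \eta^1_1)}\,b$ with $\eta^0_1 \oplus \eta^1_1$ of finite support. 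Hence in any atomic $\mathfrak{B} \models T$ the fiber $U_{\nu_\alpha}^{\mathfrak{B}}$ is a single free orbit under the group $G$ of finite-support functions; I would first record that freeness of this action, and nonemptiness of the fiber, follow from universal sentences true in $\mathfrak{A}$ and hence lie in $T$.

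For uniqueness, I would then argue as in Lemma~\ref{continuumUniqueness}. Using that $acl(\emptyset)^{\mathfrak{B}} = X$ consists of algebraic elements whose fibers $U_s$ (for finite $s$) have the fixed size $2^{\lg(s)}$ forced by $T$, one matches the finite part coherently to obtain $X^{\mathfrak{B}} \cong X$, fixing along the way a coherent system of reference elements $e_s \in U_s^{\mathfrak{B}}$. For each $\alpha \in S$ pick $d_\alpha \in U_{\nu_\alpha}^{\mathfrak{B}}$; its branch $(\pi_n(d_\alpha))_n$ deviates from the reference branch by a function $g_\alpha \in 2^\omega$, and since each fiber is a single torsor the resulting fibrewise map is a bijection, exhibiting an isomorphism $\mathfrak{B} \cong \mathfrak{A}_{\mathcal{F}}$ with $\mathcal{F} = (g_\alpha : \alpha \in S)$. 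Composing with the isomorphism $\mathfrak{A}_{\mathcal{F}} \cong \mathfrak{A}$ gives $\mathfrak{B} \cong \mathfrak{A}$.

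For primeness, the goal reduces, since $\mathfrak{A}$ is atomic, to producing a map $f : \mathfrak{A} \to \mathfrak{B}$ preserving each $\phi_{\overline\eta}$, as such an $f$ is automatically elementary. Again I would fix a coherent reference embedding $j : X \to \mathfrak{B}$ and, for each $\alpha$, choose $d_\alpha \in U_{\nu_\alpha}^{\mathfrak{B}}$, measuring the deviation $g_\alpha \in 2^\omega$ of its branch from $j$ along $\nu_\alpha$. The obstruction is that the $j$-image of the branch of $(\nu_\alpha, \overline 0)$ need not be realized in $\mathfrak{B}$ — indeed in $\mathfrak{A}_{\mathcal{F}}$ with $f_\alpha$ of infinite support it is omitted — so one cannot simply extend $j$ fibrewise. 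This is exactly where I would invoke the uniformization property, mirroring the proof that $\mathfrak{A}_{\mathcal{F}} \cong \mathfrak{A}$: setting $s_\alpha(\nu_\alpha(n)) = g_\alpha(n)$, choose $w : \omega_1 \to 2$ with $w$ differing finitely from each $s_\alpha$ on $L_\alpha$, and twist $j$ by $w$ to get a new coherent embedding $j'$ of $X$. Then $w(\nu_\alpha(n)) = g_\alpha(n)$ for all but finitely many $n$, so the $j'$-image of the $(\nu_\alpha,\overline0)$-branch differs from the branch of $d_\alpha$ by a finite-support amount $\delta_\alpha$; replacing $d_\alpha$ by $\boldsymbol{(\nu_\alpha,\delta_\alpha)}\,d_\alpha$ (legitimate, since $\delta_\alpha$ has finite support) makes it realize that branch exactly. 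Defining $f \restriction_X = j'$ and $f(\nu_\alpha, \eta_1) = \boldsymbol{(\nu_\alpha,\eta_1)}\,d_\alpha$ then yields the desired embedding. The main work, and the main obstacle, is precisely this coherent simultaneous realization of all the branches, which without uniformization would genuinely fail.
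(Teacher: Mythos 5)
Your proposal is correct and follows essentially the same route as the paper: both read off, from a model of $T$, the family $\mathcal{F}$ of branches realized in its infinite fibers, then use Lemma~\ref{unifTypes} together with the uniformization-based isomorphism $\mathfrak{A}_{\mathcal{F}} \cong \mathfrak{A}$ to produce the elementary embedding, which is surjective (via the single-orbit/torsor structure of fibers) exactly when the target is atomic. Your primeness argument merely inlines the uniformization twist that the paper encapsulates in the lemma stating each $\mathfrak{A}_{\mathcal{F}} \cong \mathfrak{A}$; the mathematical content is identical.
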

\begin{proof}
Fix $\mathfrak{N} = (N, U^*_{\eta_0}, \pi^*_\alpha, \boldsymbol{\eta}^*: \eta \in J, \alpha \leq \omega) \models T$. We can suppose $acl(\emptyset)^{\mathfrak{N}} = X$. I find some $\mathcal{F}$ such that $\mathfrak{A}_{\mathcal{F}}$ embeds $\mathfrak{N}$, which suffices to show that $\mathfrak{A}$ is prime.

Indeed, for each $\alpha \in S$, choose $a_\alpha \in U^*_{\nu_\alpha}$. Let $f_\alpha \in 2^\omega$ be defined by $f_\alpha(n) = \pi^*_{m}(a_\alpha)(n)$ for some (any) $m > n$.

Let $\mathcal{F} = (f_\alpha: \alpha \in S)$. Then by Lemma~\ref{unifTypes}, the map $\sigma_0: (\nu_\alpha, f_\alpha) \mapsto a_\alpha$ is a partial elementary map from $\mathfrak{A}_{\mathcal{F}}$ to $\mathfrak{N}$. So it extends to a partial elementary map $\sigma: dcl((\nu_\alpha, f_\alpha): \alpha \in S)^{\mathfrak{A}_\mathcal{F}} \to dcl(a_\alpha: \alpha \in S)^{\mathfrak{N}}$. But then clearly $\sigma$ has domain all of $A_{\mathcal{F}}$. Hence $\sigma: \mathfrak{A}_{\mathcal{F}} \preceq \mathfrak{N}$.

To see that $\mathfrak{A}$ is the unique atomic model of $T$, note that if $\mathfrak{N}$ is atomic, then $\sigma$ is also surjective, again by Lemma~\ref{unifTypes}.
\end{proof}

We conclude the proof of Theorem~\ref{unifTheorem} with the following
 
\begin{lemma}
$\mathfrak{A}$ is not constructible.
\end{lemma}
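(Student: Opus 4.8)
The plan is to argue by contradiction. Suppose $\mathfrak{A}$ is constructible; by Theorem~\ref{constructibleModels} fix a construction sequence $(a_\xi : \xi < \omega_1)$ enumerating $\mathfrak{A}$, and write $A_\xi = \{a_\zeta : \zeta < \xi\}$, so that $tp(a_\xi / A_\xi)$ is isolated for every $\xi$. The whole difficulty is concentrated in understanding when the type of a ``top-level'' element $a \in U_{\nu_\alpha}$ (i.e.\ $\lg(a) = \omega$, which necessarily lies on some ladder $\nu_\alpha$, $\alpha \in S$) can be isolated over a subset $B \subseteq \mathfrak{A}$. Note that the uniformization property plays no role here: it was used only to establish uniqueness and primeness, whereas non-constructibility will be a theorem of $ZFC$ about this particular structure.

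The key observation is the following isolation obstruction. Let $a \in U_{\nu_\alpha}$ and let $B \subseteq \mathfrak{A}$ satisfy $X \subseteq B$ and $B \cap U_{\nu_\alpha} = \emptyset$ (recall $X = acl(\emptyset)^{\mathfrak{A}} = \{\eta \in J : \lg(\eta) < \omega\}$). Then $tp(a/B)$ is not isolated. Indeed, for every $k < \omega$ the restriction $\pi_k(a) = (\nu_\alpha \restriction_k, a_1 \restriction_k)$ is a finite-level element, hence lies in $X \subseteq B$; so the formula $\pi_k(x) = \pi_k(a)$ belongs to $tp(a/B)$ and pins down $a_1 \restriction_k$ exactly. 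Thus $tp(a/B)$ determines all of $a_1$, but only through infinitely many formulas. On the other hand, suppose some $\psi(x, \overline{b})$ with $\overline{b} \in B$ isolated $tp(a/B)$. Each parameter $b \in \overline{b}$ is either finite-level or lies in some $U_{\nu_\beta}$ with $\beta \neq \alpha$; in either case its meet $d(\nu_\alpha, b_0)$ with $\nu_\alpha$ is finite, so by Lemma~\ref{unifTypes} the type of any element of $U_{\nu_\alpha}$ over $\overline{b}$ depends only on its first $m^\ast$ coordinates, where $m^\ast < \omega$ is the maximum of these finitely many meets. Choosing $a' = (\nu_\alpha, a_1') \in U_{\nu_\alpha}$ with $a_1'$ differing from $a_1$ only at a single coordinate $k_0 > m^\ast$, we get that $a'$ and $a$ realize the same type over $\overline{b}$, so $\mathfrak{A} \models \psi(a', \overline{b})$, yet $a'$ fails the formula $\pi_{k_0+1}(x) = \pi_{k_0+1}(a)$ which lies in $tp(a/B)$. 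Hence $\psi$ does not isolate $tp(a/B)$, a contradiction. This is the main step of the argument.

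With the obstruction in hand the contradiction is immediate. Since $X$ is countable and is enumerated by the construction sequence, there is a countable stage $\delta$ with $X \subseteq A_\delta$; as $A_\delta$ is countable while the fibres $(U_{\nu_\alpha} : \alpha \in S)$ are pairwise disjoint and uncountable in number, we may fix $\alpha \in S$ with $U_{\nu_\alpha} \cap A_\delta = \emptyset$. Let $\xi \geq \delta$ be least with $a_\xi \in U_{\nu_\alpha}$. Then $A_\xi \supseteq A_\delta \supseteq X$ while $A_\xi \cap U_{\nu_\alpha} = \emptyset$ by minimality, so the obstruction shows that $tp(a_\xi / A_\xi)$ is not isolated, contradicting the choice of $(a_\xi)$. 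The only delicate point is the isolation obstruction itself, and in particular the observation that the finite-level initial segments $\pi_k(a)$ of $\nu_\alpha$ already force approximation to $\nu_\alpha$ at every finite level, whereas any single formula can constrain only boundedly many coordinates.
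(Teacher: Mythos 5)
Your isolation obstruction (the second paragraph) is correct and is essentially the same mechanism the paper uses: any single formula $\psi(x,\overline{b})$ over a set disjoint from $U_{\nu_\alpha}$ can only constrain an element of $U_{\nu_\alpha}$ up to the finite level $m^* = \max_j d(\nu_\alpha, (b_j)_0)$, so flipping a coordinate above $m^*$ produces a realization of $\psi$ that violates $\pi_{k_0+1}(x) = \pi_{k_0+1}(a)$. But the way you apply it contains a genuine error: $X = acl(\emptyset)^{\mathfrak{A}} = \{\eta \in J : \lg(\eta) < \omega\}$ is \emph{not} countable. Its elements are pairs $(\eta_0,\eta_1)$ where $\eta_0$ ranges over all finite strictly increasing sequences of countable ordinals, so $|X| = \aleph_1$. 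Consequently there is no countable stage $\delta$ of the construction with $X \subseteq A_\delta$, and the entire final paragraph (choosing $\delta$, then $\alpha$ with $U_{\nu_\alpha} \cap A_\delta = \emptyset$, then the least $\xi \geq \delta$) collapses. The obstruction only needs the \emph{countable} set $\{\eta \in J : \eta_0 \subset \nu_\alpha,\ \lg(\eta) < \omega\}$ of finite-level elements below $\nu_\alpha$ to lie in $B$, and the real content of the proof is finding some $\alpha \in S$ for which all of these elements are enumerated \emph{before} any element of $U_{\nu_\alpha}$; your argument never establishes this for any $\alpha$.

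Moreover, your aside that non-constructibility is ``a theorem of $ZFC$ about this particular structure,'' independent of the combinatorics on $S$, is false in a way that pinpoints the missing idea: stationarity of $S$ is essential. If $S$ is uncountable but non-stationary, then $\mathfrak{A}$ \emph{is} constructible: fixing a club $C'$ disjoint from $S$, one can enumerate $\mathfrak{A}$ in blocks along $C'$, inserting the fibers $U_{\nu_\alpha}$ for $\alpha \in S \cap [c_i, c_{i+1})$ early in block $i$, at which point only boundedly many restrictions of $\nu_\alpha$ have appeared, so every type in the construction is isolated (finite-level elements are algebraic, hence always isolated). The paper's proof supplies exactly the step you are missing: given a construction $(\eta^\xi : \xi < \omega_1)$ with initial segments $J_\xi$, the set $C$ of $\alpha$ with $J_\alpha = \{\eta \in J : \sup(\eta_0) < \alpha\}$ is a club (two continuous filtrations of $J$ by sets of size $< \aleph_1$ agree on a club), and \emph{stationarity of $S$} yields $\alpha \in S \cap C$; for such $\alpha$, every finite restriction of $\nu_\alpha$ lies in $J_\alpha$ while $J_\alpha \cap U_{\nu_\alpha} = \emptyset$, so the first element of $U_{\nu_\alpha}$ enumerated has non-isolated type over its predecessors, by your obstruction. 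Without this club-versus-stationary-set intersection, the proof does not go through.
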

\begin{proof}
Suppose $(\eta^\alpha: \alpha < \omega_1)$ were a construction of $\mathfrak{A}$ (it suffices to consider this order type by Theorem~\ref{constructibleModels}). Let $J_\alpha = \{\eta^\beta: \beta < \alpha\}$. Let $C \subset \omega_1$ be the club set of all $\alpha < \omega_1$ such that $J_\alpha = \{\eta \in J: \mbox{sup}(\eta_0) < \alpha\}$. Choose $\alpha \in S \cap C$. Let $\beta \geq \alpha$ be least with $\eta^\beta_0 = \nu_\alpha$, i.e. with $\eta^\beta \in U_{\nu_\alpha}$.

By Lemma~\ref{unifTypes}, it is clear that for any set $B$ such that $B \supset \{\eta \in J: \eta_0 \subset \nu_\alpha, \lg(\eta) < \omega\}$ and $B \cap U_{\nu_\alpha} = \emptyset$, that $tp(\eta^\beta / B)$ is nonisolated. In particular $tp(\eta^\beta / J_\beta)$ is nonisolated.
\end{proof}

\begin{remark}\label{ThirdExample}
Second Example: rewind back to the beginning of the section, and define instead $J_1$ to be the entire space $2^{\leq \omega}$. Then we  have without any special combinatorics that $\mathfrak{A}$ is the unique atomic model of $T$, and is prime, but is not constructible (although the language $\mathcal{L}$ now has size continuum).

\end{remark}

%%%%%%%%%%%%%%%%%%%%%%%%%
%%%%%%%%%%%%%%%%%%%%
\section{Producing Many Atomic Models of Size $\aleph_1$}
\label{cons}
%%%%%%%%%%%%%%%%%%%%

\subsection{Setup}
Fix throughout this section a complete theory $T$ in a language $\mathcal{L}$ of cardinality $\aleph_1$, such that $T$ has atomic models. Write $\mathcal{L} = \bigcup_{\alpha < \omega_1} \mathcal{L}_\alpha$ as the union of a continuous increasing chain of countable languages, and let $T_\alpha = T \restriction_{\mathcal{L}_\alpha}$. 

Recall that an $\mathcal{L}$-formula $\phi(\overline{x})$ is $T$-complete if it is consistent with $T$ and for every formula $\psi(\overline{x})$, $T + \phi(\overline{x})$ decides $\psi(\overline{x})$; equivalently, $\phi(\overline{x})$ isolates a single point in the Stone space $S^n(\emptyset)$. Since $T$ has atomic models, for every $\mathcal{L}$-formula $\phi(\overline{x})$ consistent with $T$, there is a $T$-complete formula $\psi(\overline{x})$ that implies $\phi(\overline{x})$.  So we can choose a club set $\mathbf{C}_0 \subseteq \omega_1$ such that for every $\alpha \in \mathbf{C}_0$ and for every $\mathcal{L}_\alpha$-formula $\phi(\overline{x})$, if $\phi(\overline{x})$ is consistent with $T$ then $\phi(\overline{x})$ has a $T$-complete extension $\psi(\overline{x})$, which is itself an $\mathcal{L}_\alpha$ formula.

It follows that for each $\alpha \in \mathbf{C}_0$, $T_\alpha$ has atomic models (though possibly not uncountable atomic models); and an $\mathcal{L}_\alpha$-formula $\phi(\overline{x})$ is $T$-complete if and only if it is $T_\alpha$ complete.

Let $\mathfrak{C}$ be a monster model of $T$. We use standard model-theoretic notation: $A, B, C,...$ will range over parameter sets, and $M, N, ...$ will range over elementary submodels of $\mathfrak{C}$. If $\overline{a} \in \mathfrak{C}$ and $\phi(\overline{x})$ is an $\mathcal{L}$-formula we write $\models \phi(\overline{a})$ for $\mathfrak{C} \models \phi(\overline{a})$. If $A \subset \mathfrak{C}$ is a set then $S^n(A)$ denotes the space of $n$-types over $A$, and $S(A)$ denotes $\bigcup_n S^n(A)$. If we write $f: A \to B$ it is implied that $f$ is partial $\mathcal{L}$-elementary.

Define an \emph{atomic set} to be a countable set $A \subset \mathfrak{C}$ such that every tuple $\overline{a} \in A$ has $tp(\overline{a})$ isolated by a single $\mathcal{L}$-formula. Say that $A$ is an $\alpha$-\emph{atomic set} if moreover this formula can be chosen in $\mathcal{L}_\alpha$.

If $A$ is $\alpha$-atomic, for some $\alpha \in \mathbf{C}_0$, then say that $A$ is an $\alpha$-\emph{base} if $A \restriction_{\mathcal{L}_\alpha} \preceq \mathfrak{C} \restriction_{\mathcal{L}_\alpha}$. $A$ is a \emph{base} if it is an $\alpha$-base for some $\alpha \in \mathbf{C}_0$. (Here we are using the term base as in ``amalgamation base.")

Note that for each $\alpha \in \mathbf{C}_0$, $\alpha$-bases exist and are unique up to isomorphism. Also, if $f: \omega \to \mathbf{C}_0$ is increasing, and $A_n$ is an increasing chain of $f(n)$-bases, then $\bigcup_n A_n$ is a $\bigcup_n f(n)$-base. Similarly, if $f: \omega_1 \to \mathbf{C}_0$ is increasing and cofinal, and $A_{\alpha}$ is an increasing chain of $f(\alpha)$-bases, then $\bigcup_n A_n$ is an atomic model of $T$.

For each atomic set $A$ and for each $n$, let $S^n_{at}(A)$ be the set of all atomic types over $A$ (i.e. all types $p(\overline{x}) \in S^n(A)$ such that whenever $\overline{a}$ realizes $p(\overline{x})$, $A \overline{a}$ is atomic). This is a dense subset of $S^n(A)$; give it the subspace topology. Let $S_{at}(A) = \bigcup_n S^n_{at}(A)$ with the disjoint union topology. 

For each atomic set $A$ and for each $\beta \in \mathbf{C}_0$, define $S^{n, \beta}_{at}(A)$ to be the set of all types $p(\overline{x}) \in S^n_{at}(A)$, such that whenever $\overline{a}$ realizes $p(\overline{x})$, $A \overline{a}$ is $\beta$-atomic. Give $S^{n, \beta}_{at}(A)$ the subspace topology.

\begin{lemma}
\begin{itemize}

\item For each $\beta \in \mathbf{C}_0$, the topology on $S^{n, \beta}_{at}(A)$ is generated by the $\mathcal{L}_\beta(A)$ formulas.

\begin{proof}
Let $\mathcal{O}$ be a basic open subset of $S^{n}_{at}(A)$; say $\mathcal{O} = \{p(\overline{x}) \in S^n_{at}(A): p(\overline{x}) \models \phi(\overline{x}, \overline{a})\}$ where $\phi(\overline{x}, \overline{a})$ is an $\mathcal{L}$-formula. Suppose $p(\overline{x}) \in S^{n, \beta}_{at}(A) \cap \mathcal{O}$. We can choose a complete $\mathcal{L}_\beta(\overline{a})$-formula $\psi(\overline{x}, \overline{a})$ such that $p(\overline{x}) \models \psi(\overline{x}, \overline{a})$. Let $\mathcal{U} = \{q(\overline{x}) \in S^n_{at}(A): q(\overline{x}) \models \psi(\overline{x}, \overline{a})\}$. Then $p \in \mathcal{U} \subseteq \mathcal{O}$ as desired.
\end{proof}

\item For each $\beta$, $S^{n, \beta}_{at}(A)$ is a Polish space (or empty), and is closed in $S^n_{at}(A)$.

\begin{proof}
Closure is clear. To see that it is a Polish space, let $\mathfrak{C}' = \mathfrak{C} \restriction \mathcal{L}_\beta$ and let $X = S^n(A)$ computed in $\mathfrak{C}'$. Then $X$ is a Polish space and $S^{n, \beta}_{at}(A)$ is naturally embedded as a $G_\delta$ subset of $X$.
\end{proof}
\end{itemize}
\end{lemma}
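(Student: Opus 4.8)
The plan is to handle the two items in turn, since the first feeds the second, and in both cases the single decisive feature is the defining property of the club $\mathbf{C}_0$: for $\beta \in \mathbf{C}_0$ an $\mathcal{L}_\beta$-formula is $T$-complete exactly when it is $T_\beta$-complete, so a formula isolating a point of the $\mathcal{L}_\beta$-Stone space over $\emptyset$ automatically decides \emph{every} $\mathcal{L}$-formula.

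For the first item I would start with an arbitrary basic open $\mathcal{O} = \{p : p \models \phi(\overline{x}, \overline{a})\}$ of $S^n_{at}(A)$, with $\phi$ an $\mathcal{L}$-formula and $\overline{a} \in A$, and a point $p \in \mathcal{O} \cap S^{n,\beta}_{at}(A)$. Since $p$ is $\beta$-atomic, the $\mathcal{L}$-type of $\overline{c}\,\overline{a}$ over $\emptyset$ (for $\overline{c} \models p$) is isolated by some $\mathcal{L}_\beta$-formula $\chi(\overline{x}, \overline{y})$; setting $\psi(\overline{x}, \overline{a}) := \chi(\overline{x}, \overline{a})$ gives an $\mathcal{L}_\beta(A)$-formula lying in $p$. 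The force of $\mathbf{C}_0$ is that $\chi$, being complete over $\mathcal{L}_\beta$, is $T$-complete and hence decides $\phi$; as $p$ contains both $\psi$ and $\phi$ we get that $\chi$ implies $\phi$ modulo $T$, so the $\mathcal{L}_\beta(A)$-neighborhood $\{q : q \models \psi\}$ of $p$ sits inside $\mathcal{O}$. This exhibits the $\mathcal{L}_\beta(A)$-formulas as a basis for the subspace topology.

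For the second item, closure is the easy half. Assuming the space is nonempty (so $A$ itself is $\beta$-atomic), if $p \in S^n_{at}(A) \setminus S^{n,\beta}_{at}(A)$ then some tuple $\overline{c}'\overline{a}$, with $\overline{c}' \subseteq \overline{c} \models p$ and $\overline{a} \in A$, has its type isolated by some $\mathcal{L}$-formula $\theta(\overline{x}', \overline{a})$ but by no $\mathcal{L}_\beta$-formula; then $\{q : q \models \theta(\overline{x}', \overline{a})\}$ is a neighborhood of $p$ on which every realizer reproduces $\text{tp}(\overline{c}'\overline{a})$, and hence which misses $S^{n,\beta}_{at}(A)$ entirely. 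For Polishness I would pass to the reduct $\mathfrak{C}' = \mathfrak{C} \restriction \mathcal{L}_\beta$ and let $X = S^n(A)$ be the compact metrizable (hence Polish) Stone space computed there. Restriction $p \mapsto p \restriction \mathcal{L}_\beta$ maps $S^{n,\beta}_{at}(A)$ into $X$; it is injective because on a $\beta$-atomic type each derived finite $\mathcal{L}_\beta$-type over $\emptyset$ is isolated and therefore, again by $\beta \in \mathbf{C}_0$, $T$-complete, pinning down the full $\mathcal{L}$-type, and by the first item it is a homeomorphism onto its image.

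It remains to see that the image is $G_\delta$, which I expect to be the only genuine work. Using the $\mathbf{C}_0$ equivalence once more, a type $r \in X$ lies in the image iff the realizer $\overline{c}$ with $\text{tp}_{\mathcal{L}_\beta}(\overline{c}/A) = r$ (available since $\mathfrak{C}'$ is saturated over $A$) makes $A\overline{c}$ a $\beta$-atomic set, and this holds iff for every finite $\overline{a} \in A$ and every choice of coordinates $\overline{x}' \subseteq \overline{x}$ some $T_\beta$-complete $\mathcal{L}_\beta$-formula $\chi(\overline{x}', \overline{y})$ satisfies $\chi(\overline{x}', \overline{a}) \in r$. For fixed $(\overline{a}, \overline{x}')$ this is the union over the countably many complete $\chi$ of the basic clopen sets $\{r : \chi(\overline{x}', \overline{a}) \in r\}$, hence open; intersecting over the countably many pairs $(\overline{a}, \overline{x}')$ presents the image as a $G_\delta$, and a $G_\delta$ subspace of a Polish space is Polish. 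The main obstacle is precisely the bookkeeping here: phrasing $\beta$-atomicity as a countable conjunction of open ``a witnessing complete formula exists'' conditions, and verifying via saturation of $\mathfrak{C}'$ together with the $\mathbf{C}_0$ property that each such $r$ lifts to a \emph{unique} $\beta$-atomic $\mathcal{L}$-type, so that this $G_\delta$ set is exactly the image and not merely a superset of it.
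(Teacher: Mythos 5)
Your proof is correct and follows essentially the same route as the paper: for the first item, shrinking a basic $\mathcal{L}(A)$-neighborhood of a point of $S^{n,\beta}_{at}(A)$ to a neighborhood cut out by a complete $\mathcal{L}_\beta(A)$-formula, and for the second, embedding $S^{n,\beta}_{at}(A)$ via restriction into the Stone space $S^n(A)$ of the reduct $\mathfrak{C} \restriction \mathcal{L}_\beta$ as a $G_\delta$ subset. The paper states the closure and $G_\delta$ claims without proof; your write-up supplies exactly the details the sketch takes for granted (injectivity of restriction via the $\mathbf{C}_0$-equivalence of $T_\beta$-completeness with $T$-completeness, and the presentation of the image as a countable intersection of open ``some complete $\mathcal{L}_\beta$-formula lies in $r$'' conditions), so it is a faithful filling-in rather than a different argument.
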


Let $\mathbf{K}_T$ be the class of atomic models of $T$. We now define what it means for $\mathbf{K}_T$ to be club totally transcendental: 

For $\alpha \in \mathbf{C}_0$, say that $\mathbf{K}_T$ is \emph{totally transcendental at} $\alpha$ if, letting $A$ be any $\alpha$-base, we have that $S_{at}(A)$ is scattered, i.e. has no perfect subset. Equivalently $\mathbf{K}_T$ is totally transcendental at $\alpha$ if for each $n$ and for each $\beta \in \mathbf{C}_0$, $S^{n, \beta}_{at}(A)$ is countable. Let the \emph{transcendence spectrum} of $\mathbf{K}_T$, $\mbox{Spec}_{\mathbf{K}_T}(t.t.)$, denote the set of all $\alpha \in \mathbf{C}_0$ at which $\mathbf{K}_T$ is totally transcendental.

\begin{definition}
$\mathbf{K}_T$ is \emph{club totally transcendental} if $\mbox{Spec}_{\mathbf{K}_T}(t.t.)$ contains a club.
\end{definition}

We aim to prove:

\vspace{2 mm}

\noindent \textbf{Theorem~\ref{main}.}
Suppose $\Phi^*$ holds and $\mathsf{Cov}(\mathcal{K}) \geq \aleph_2$. Suppose further that $\mathbf{K}_T$ is not club totally transcendental. Then $T$ has $2^{\aleph_1}$ atomic models of size $\aleph_1$.

\subsection{Club Totally Transcendental Property and the Existence of Constructible Models}\label{ttAndCons}

Given $\alpha \in \mathbf{C}_0$, note that if $\mathbf{K}_T$ is totally transcendental at $\alpha$, then the isolated types are dense in $S_{at}(A)$, where $A$ is any $\alpha$-base. The converse of course can fail drastically: say $T$ has $\aleph_1$-many sorts, each a model of DLO.

We can relate all this to constructible models as follows. Let the constructible spectrum of $\mathbf{K}_T$, $\mbox{Spec}_{\mathbf{K}_T}(CS)$, be the set of all $\alpha \in \mathbf{C}_0$ such that the isolated types are dense in $S_{at}(A)$, where $A$ is any $\alpha$-base. So by the preceding, $\mbox{Spec}_{\mathbf{K}_T}(t.t.) \subseteq \mbox{Spec}_{\mathbf{K}_T}(CS)$. Moreover:

\begin{theorem}\label{reduction}
$T$ has a constructible model if and only if $\mbox{Spec}_{\mathbf{K}_T}(CS)$ contains a club. In particular, if $\mathbf{K}_T$ is club totally transcendental, then $T$ has a constructible model.
\end{theorem}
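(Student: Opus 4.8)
The plan is to prove both directions of the biconditional, with the reverse direction (``$\mbox{Spec}_{\mathbf{K}_T}(CS)$ contains a club $\Rightarrow T$ has a constructible model'') being the substantive one; it also yields the ``in particular'' clause, since $\mbox{Spec}_{\mathbf{K}_T}(t.t.) \subseteq \mbox{Spec}_{\mathbf{K}_T}(CS)$ was already observed. Throughout, I read ``isolated'' as isolated in the subspace $S_{at}(A)$: a type $p$ is isolated there iff some $\psi(\overline{x}, \overline{a})$ with $\overline{a} \in A$ satisfies $[\psi] \cap S_{at}(A) = \{p\}$.

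For the reverse direction, fix a club $C \subseteq \mbox{Spec}_{\mathbf{K}_T}(CS) \subseteq \mathbf{C}_0$ and enumerate it increasingly as $(c_\alpha : \alpha < \omega_1)$. I would build a continuous increasing chain of bases $(A_\alpha : \alpha < \omega_1)$, with each $A_\alpha$ a $c_\alpha$-base, together with an enumeration of each difference $A_{\alpha+1} \setminus A_\alpha = \{b^\alpha_n : n < \omega\}$ in which every $b^\alpha_n$ realizes an isolated point of $S_{at}(A_\alpha \cup \{b^\alpha_m : m < n\})$. I lean on two facts: first, the union facts already recorded, so that at limits $A_\lambda = \bigcup_{\beta < \lambda} A_\beta$ is automatically a $c_\lambda$-base (as $c_\lambda = \sup_{\beta < \lambda} c_\beta$ by closure of $C$) and $\bigcup_\alpha A_\alpha$ is an atomic model of $T$; and second, a density-preservation lemma, namely that if the isolated types are dense in $S_{at}(B)$ and $b$ realizes an isolated point of $S_{at}(B)$, then the isolated types are dense in $S_{at}(Bb)$. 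The concatenation of the stage-enumerations then witnesses that $M = \bigcup_\alpha A_\alpha$ is constructible, since the predecessors of $b^\alpha_n$ in the global order are exactly $A_\alpha \cup \{b^\alpha_m : m < n\}$.

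The heart of this direction, and the step I expect to be the main obstacle, is the successor step: from a $c_\alpha$-base $A_\alpha$ (over which, as $c_\alpha \in \mbox{Spec}_{\mathbf{K}_T}(CS)$, the isolated types are dense in $S_{at}(A_\alpha)$) I must produce a $c_{\alpha+1}$-base $A_{\alpha+1} \supseteq A_\alpha$ reachable by a length-$\omega$ construction of isolated-type elements. I would run a bookkeeping over $\omega$ stages meeting two families of requirements: the Tarski--Vaught requirements (for each satisfiable $\mathcal{L}_{c_{\alpha+1}}$-formula $\psi(y, \overline{b})$ over the current set, adjoin a witness chosen to realize an isolated point of $S_{at}$ of that set lying in $[\psi]$, possible because density survives each step by the preservation lemma), and the atomicity requirements (choosing every isolating formula inside $\mathcal{L}_{c_{\alpha+1}}$, legitimate since $c_{\alpha+1} \in \mathbf{C}_0$, so that the result is $c_{\alpha+1}$-atomic and the enumeration stays a construction). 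The delicate point is to verify that these interleaved choices simultaneously deliver $A_{\alpha+1} \restriction_{\mathcal{L}_{c_{\alpha+1}}} \preceq \mathfrak{C} \restriction_{\mathcal{L}_{c_{\alpha+1}}}$ and $c_{\alpha+1}$-atomicity while never leaving the isolated-dense region; density-preservation is precisely what keeps the construction non-terminating.

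For the forward direction, suppose $T$ has a constructible model; by Theorem~\ref{constructibleModels} fix a construction $(m_i : i < \omega_1)$ of order type $\omega_1$ and set $M_{<i} = \{m_j : j < i\}$. A routine closing-off argument produces a club $D \subseteq \mathbf{C}_0$ such that for $\delta \in D$ the set $A := M_{<\delta}$ is a $\delta$-base and the tail $(m_i : \delta \leq i < \omega_1)$ is a construction of $M$ over $A$; in particular $M$ is atomic over $A$. I would then show $\delta \in \mbox{Spec}_{\mathbf{K}_T}(CS)$ for each $\delta \in D$, which suffices. Given a nonempty $[\phi(\overline{x}, \overline{a})] \cap S_{at}(A)$ with $\overline{a} \in A$, the goal is an isolated point of $S_{at}(A)$ inside it; since $M$ is atomic over $A$, any tuple of $M$ satisfying $\phi$ has isolated-over-$A$ type in this open set, so it suffices that $M$ realize $\phi$. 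For $\phi \in \mathcal{L}_\delta$ this is immediate from $M_{<\delta} \restriction_{\mathcal{L}_\delta} \preceq \mathfrak{C} \restriction_{\mathcal{L}_\delta}$; the remaining subtlety is handling formulas of level $\beta > \delta$, where I would use constructibility of $M$ over $A$ to realize the witness, amalgamating the atomic extension $A\overline{c}$ with the $A$-construction of $M$, thereby producing the needed isolated point of $S_{at}(A)$ in $[\phi]$. Finally, the ``in particular'' clause is immediate: club total transcendence gives a club inside $\mbox{Spec}_{\mathbf{K}_T}(t.t.) \subseteq \mbox{Spec}_{\mathbf{K}_T}(CS)$, whence by the reverse direction $T$ has a constructible model.
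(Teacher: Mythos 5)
Your forward direction is essentially the paper's own argument and is fine: cut the construction at club-many $\delta$ for which $M_{<\delta}$ is a $\delta$-base, apply Theorem~\ref{constructibleModels} to $T$ with constants for $M_{<\delta}$ to conclude $M$ is atomic over $M_{<\delta}$, and use tuples of $M$ to witness density. The ``remaining subtlety'' you raise about formulas of level $\beta > \delta$ is spurious: $M$ is a model of $T$ which we may take to be an elementary submodel of $\mathfrak{C}$ in the \emph{full} language, so it realizes every formula over $M_{<\delta}$ consistent with $T$; no ``amalgamation with the $A$-construction'' is needed (and amalgamation is not an available tool for this $T$ in any case).

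The reverse direction, however, has a genuine gap, located exactly where you predicted. You fix the target level $c_{\alpha+1}$ in advance and justify the atomicity requirements by saying every isolating formula can be ``chosen inside $\mathcal{L}_{c_{\alpha+1}}$, legitimate since $c_{\alpha+1} \in \mathbf{C}_0$.'' But membership in $\mathbf{C}_0$ only controls types over the \emph{empty set}: consistent $\mathcal{L}_{c_{\alpha+1}}$-formulas have $T$-complete $\mathcal{L}_{c_{\alpha+1}}$-extensions. It says nothing about isolation over the growing parameter sets $B_n = A_\alpha \cup \{b^\alpha_m : m < n\}$. The density hypothesis $c_\alpha \in \mbox{Spec}_{\mathbf{K}_T}(CS)$ supplies isolated points of $S_{at}(B_n)$, but their isolating formulas are arbitrary $\mathcal{L}(B_n)$-formulas of unboundedly high countable level, and nothing forces them into $\mathcal{L}_{c_{\alpha+1}}$. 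Consequently your union $A_{\alpha+1}$ may fail to be $c_{\alpha+1}$-atomic; and since your Tarski--Vaught bookkeeping arranged elementarity only for $\mathcal{L}_{c_{\alpha+1}}$, the result need not be a base at \emph{any} level, because a base requires the atomicity level and the elementarity level to be the same $\beta \in \mathbf{C}_0$. (Indeed, there may be no $c_{\alpha+1}$-base containing $A_\alpha$ at all; extendibility only gives bases at \emph{arbitrarily large} levels, not at every prescribed level.) The repair is to let the level float: dovetail Tarski--Vaught requirements for an increasing $\omega$-sequence of levels $\beta_k \in C$, each chosen above all isolating formulas used so far, stop at $\beta_\omega = \sup_k \beta_k \in C$, and re-index your chain along the club of levels actually achieved. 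This is precisely what the paper does in packaged form: it invokes Theorem~\ref{atomicModelsAtAleph1} over $A_{\gamma_\alpha}$ to get a full model $M^\alpha$ atomic over $A_{\gamma_\alpha}$ (so full-language elementarity comes for free), and only \emph{afterwards} chooses the next level $\delta \in C$ from the club of $\delta$ such that the initial segment $B_\delta$ is a $\delta$-base.
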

\begin{proof}
First suppose $T$ has a constructible model $M$; say $M = (a_\alpha: \alpha < \omega_1)$ is a construction (with repetitions if $M$ is countable). Let $A_\alpha := \{a_\beta: \beta < \alpha\}$. Then the set $C = \{\alpha \in \mathbf{C}_0: A_\alpha \mbox{ is an $\alpha$-base}\}$ is club. Let $\alpha \in C$. Then $M$ is atomic over $A_\alpha$ by Theorem~\ref{constructibleModels} (applied to the theory $T(c_a: a \in A_\alpha)$ in the language $\mathcal{L}(c_a: a \in A_\alpha)$ where we add constants for elements of $A_\alpha$), which shows that the isolated types are dense in $S_{at}(A)$. Hence $\alpha \in \mbox{Spec}_{\mathbf{K}_T}(CS)$, so $\mbox{Spec}_{\mathbf{K}_T}(CS)$ contains a club.

Conversely, suppose $\mbox{Spec}_{\mathbf{K}_T}(CS) \supseteq C$, $C$ a club; we can suppose $C \subseteq \mathbf{C}_0$. We define an increasing, continuous chain of atomic sets $(A_\gamma: \gamma \in C')$ where $C' = \{\gamma_\alpha: \alpha < \omega_1\} \subseteq C$ is club and each $A_\alpha$ is an $\alpha$-base. We will further have that for all $\alpha < \omega_1$, $A_{\gamma_{\alpha+1}}$ is atomic over $A_{\gamma_\alpha}$. Finally, for each $\alpha < \omega_1$, we will have a construction $A_{\gamma_\alpha} = (a_\beta: \beta < \gamma_\alpha)$. As implied by the notation, for $\alpha < \alpha'$, the construction of $A_{\gamma_\alpha}$ is an initial segment of the construction of $A_{\gamma_{\alpha'}}$.

Note that this will suffice, since setting $M := \bigcup_\alpha A_{\gamma_\alpha}$, we have $M$ is a constructible model of $T$, as witnessed by $(a_\beta: \beta < \omega_1)$. 

Let $\gamma_0$ be the least infinite element of $C$ and let $A_{\gamma_0} = (a_\beta: \beta < \gamma_0)$ be any $\gamma_0$-base. Take unions at limit stages.

Suppose we have defined $\gamma_\alpha$ and $A_{\gamma_\alpha} = (a_\beta: \beta < \gamma_\alpha)$. Write $A = A_{\gamma_\alpha}$. Since the isolated types are dense in $S_{at}(A)$ we can choose an $M^\alpha \models T$, $M^\alpha \supseteq A$ and $M^\alpha$ atomic over $A$ by Theorem~\ref{atomicModelsAtAleph1} (applied to the theory $T(c_a: a \in A_\alpha)$ in the language $\mathcal{L}(c_a: a \in A_\alpha)$). It is possible that $M^\alpha$ is countable or even $M^\alpha = A$, but in any case we can enumerate $M^\alpha = (a_\beta^\alpha: \beta < \omega_1)$ so that for all $\beta < \gamma_\alpha$, $a^\alpha_\beta = a_\beta$. For each $\delta < \omega_1$, let $B_\delta = \{a^\alpha_\beta: \beta < \delta\}$.  Then the set of all $\delta$ such that $B_\delta$ is a $\delta$-base is club, so we can choose some such $\delta$ with $\delta \in C$ and $\delta > \gamma_\alpha$. Let $\gamma_{\alpha+1} = \delta$ and let $A_{\gamma_\alpha+1} = B_\delta$ and define $a_\beta = a^\alpha_\beta$ for all $\gamma_\alpha \leq \beta < \gamma_{\alpha+1}$.
\end{proof}
Hence, as a corollary of Theorem~\ref{main}, we will get

\vspace{2 mm}

\noindent \textbf{Theorem~\ref{submain}.} Suppose $\Phi^*$ holds and $\mathsf{Cov}(\mathcal{K}) \geq \aleph_2$. Suppose further that $T$ has no constructible models. Then $T$ has $2^{\aleph_1}$ atomic models of size $\aleph_1$.

\subsection{Club Totally Transcendental Property and Amalgamation}\label{amalgamation}

The proof of the main theorem will split into two cases: first, where $\mathbf{K}_T$ fails the club amalgamation property (to be defined below), and second, where $\mathbf{K}_T$ has the club amalgamation property but is not club totally transcendental. As in the countable language case we will actually have that if $\mathbf{K}_T$ is club totally transcendental then $\mathbf{K}_T$ has the club amalgamation property.

Let $\alpha \in \mathbf{C}_0$. An amalgamation problem at $\alpha$ is a triple $(A_0, A_1, A_2)$ of (countable) atomic sets, such that $A_0$ is an $\alpha$-base, and $A_0 \subseteq A_i$ for $i = 1, 2$. A solution is a triple $(A_3, f_1, f_2)$ such that $A_3$ is an atomic set, $f_i: A_i \to A_3$ are elementary, and the $f_i$'s agree on $A_0$. We say that $\mathbf{K}_T$ has the amalgamation property at $\alpha$ if every amalgation problem at $\alpha$ has a solution. We let the amalgamation spectrum of $\mathbf{K}_T$, $\mbox{Spec}_{\mathbf{K}_T}(AP)$, denote the set of all $\alpha \in \mathbf{C}_0$ at which $\mathbf{K}_T$ has the amalgamation property.

\begin{definition}
$\mathbf{K}_T$ has the club amalgamation property if $\mbox{Spec}_{\mathbf{K}_T}(AP)$ contains a club.
\end{definition}

\begin{theorem}
Suppose $\mathbf{K}_T$ is club totally transcendental. Then $\mathbf{K}_T$ has the club amalgamation property.
\end{theorem}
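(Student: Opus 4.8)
The plan is to reduce the club amalgamation property for $T$ to the countable-language amalgamation result Theorem~\ref{countLangAmalg}, applied to a suitable countable reduct $T_\beta$. Let $C \subseteq \mbox{Spec}_{\mathbf{K}_T}(t.t.)$ be a club, which exists by hypothesis; I claim $C \subseteq \mbox{Spec}_{\mathbf{K}_T}(AP)$, which suffices. So fix $\alpha \in C$ and an amalgamation problem $(A_0, A_1, A_2)$ at $\alpha$. Since $A_1, A_2$ are countable and $\mathcal{L}$-atomic, only countably many $\mathcal{L}$-symbols occur in their isolating formulas, so using that $C$ is unbounded I choose $\beta \in C$ with $\beta > \alpha$ large enough that $A_1$ and $A_2$ (hence also $A_0$) are $\beta$-atomic. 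The goal is to solve the problem inside the reduct $\mathfrak{C} \restriction_{\mathcal{L}_\beta}$, a monster model of the complete countable-language theory $T_\beta$, and then transport the solution back to $T$.

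The crux is a dictionary between $\mathcal{L}_\beta$-notions and $\mathcal{L}$-notions, valid because $\beta \in \mathbf{C}_0$: an $\mathcal{L}_\beta$-formula is $T$-complete if and only if it is $T_\beta$-complete. Two consequences follow. \emph{First}, a $\beta$-atomic $\mathcal{L}$-set is exactly an $\mathcal{L}_\beta$-atomic set for $T_\beta$, and an $\mathcal{L}_\beta$-elementary map between $\beta$-atomic sets is automatically $\mathcal{L}$-elementary: the $\mathcal{L}_\beta$-formula isolating the $\mathcal{L}_\beta$-type of a tuple, being $T_\beta$-complete, is $T$-complete and so already pins down the full $\mathcal{L}$-type. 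Thus $A_0, A_1, A_2$ are $\mathcal{L}_\beta$-atomic sets for $T_\beta$, and any $T_\beta$-amalgam with $\mathcal{L}_\beta$-elementary maps lifts verbatim to a $T$-solution. \emph{Second}, for any $\beta$-atomic set $A$ the restriction map $S^{n,\beta}_{at}(A) \to S^n_{at}(A; T_\beta)$, $p \mapsto p \restriction_{\mathcal{L}_\beta}$ (where $S^n_{at}(A; T_\beta)$ denotes the analogous space of atomic $\mathcal{L}_\beta$-types over $A$ computed in $T_\beta$), is well defined and surjective: given an atomic $\mathcal{L}_\beta$-type $q$ over $A$, realize it by $\overline{a}$, and note that the $\mathcal{L}_\beta$-isolating formula of each tuple from $A\overline{a}$, being $T_\beta$-complete hence $T$-complete, witnesses that $A\overline{a}$ is in fact $\beta$-atomic, so $tp_{\mathcal{L}}(\overline{a}/A)$ lies in $S^{n,\beta}_{at}(A)$ and restricts to $q$. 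Hence $|S^n_{at}(A; T_\beta)| \leq |S^{n,\beta}_{at}(A)|$.

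With this dictionary the two counting hypotheses of Theorem~\ref{countLangAmalg} fall out of total transcendence on $C$. Taking $A$ to be a $\beta$-base $M_0$, whose $\mathcal{L}_\beta$-reduct is a countable atomic model of $T_\beta$, total transcendence at $\beta$ gives that $S^{n,\beta}_{at}(M_0)$ is countable, so by the surjection $S^n_{at}(M_0; T_\beta)$ is countable for every $n$; that is, $\mathbf{K}_{T_\beta}$ is $\omega$-stable. Taking $A = A_0$, which is an $\alpha$-base and hence covered by total transcendence at $\alpha$, the same surjection yields that $S_{at}(A_0; T_\beta)$ is countable. Now Theorem~\ref{countLangAmalg}, applied to $T_\beta$ and $(A_0, A_1, A_2)$, produces a countable $\mathcal{L}_\beta$-atomic amalgam $(A_3, f_1, f_2)$ with the $f_i$ agreeing on $A_0$; by the first consequence of the dictionary $A_3$ is $\mathcal{L}$-atomic and the $f_i$ are $\mathcal{L}$-elementary, so $(A_3, f_1, f_2)$ solves the original problem at $\alpha$. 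Thus amalgamation holds at every $\alpha \in C$, giving the club amalgamation property.

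I expect the main obstacle to be verifying the dictionary rigorously: making sure the equivalence between $\mathcal{L}_\beta$-atomicity and $\beta$-atomicity (and between the two elementarity notions) uses only the $\mathbf{C}_0$-property for parameter-free complete formulas, and checking that the type-counting surjection is correctly set up, since both its well-definedness and its surjectivity rest on the same completeness transfer. Everything else is bookkeeping: the genuine amalgamation content is delegated entirely to Theorem~\ref{countLangAmalg}, and total transcendence enters only to certify its two counting hypotheses, one over a $\beta$-base (for $\omega$-stability of $T_\beta$) and one over the given $\alpha$-base $A_0$.
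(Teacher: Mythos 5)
Your proof is correct and takes essentially the same route as the paper's: both verify amalgamation at $\alpha$ by passing to the countable reduct $T_\beta$ for a suitable $\beta > \alpha$ in $\mbox{Spec}_{\mathbf{K}_T}(t.t.)$ (large enough that the $A_i$ are $\beta$-atomic) and invoking Theorem~\ref{countLangAmalg}, with total transcendence at $\beta$ and at $\alpha$ supplying its two countability hypotheses. The only difference is one of detail: the paper leaves the reduct dictionary (the transfer of completeness, atomicity, and elementarity between $\mathcal{L}_\beta$ and $\mathcal{L}$, and the resulting type-counting surjection) implicit, whereas you spell it out.
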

\begin{proof}
We show that $\mbox{Spec}_{\mathbf{K}_T}(t.t.) \subseteq \mbox{Spec}_{\mathbf{K}_T}(AP)$, which suffices.

Indeed, let $\alpha \in \mbox{Spec}_{\mathbf{K}_T}(t.t.)$ and let $(A_0, A_1, A_2)$ be an amalgamation property at $\alpha$. Choose $\beta > \alpha$ so that $\beta \in \mbox{Spec}_{\mathbf{K}_T}(t.t.)$ and each $A_i$ is $\beta$-atomic. Now $S^{\beta}_{at}(A_0)$ is countable, hence the isolated types are dense in $S^{\beta}_{at}(A_0)$. So by applying Theorem~\ref{countLangAmalg} to the theory $T \restriction_{\mathcal{L}_\beta}$ we get a solution.
\end{proof}
\subsection{Promises}

Our idea for constructing many models is the following: we will produce a tree $(A_{\mathbf{s}}: \mathbf{s} \in 2^{<\omega_1})$ of bases, such that if we set $M_\eta := \bigcup_{\alpha} A_{\eta \restriction_\alpha}$ for $\eta \in 2^{\omega_1}$, then each $M_\eta$ is an atomic model of $T$. 

We will also be producing, for each $\mathbf{s} \in 2^{<\omega_1}$, a set $\Phi_{\mathbf{s}} \subseteq S_{at}(A_{\mathbf{s}})$, such that every $\eta \supseteq \mathbf{s}$ has $M_\eta$ omits $\Phi_{\mathbf{s}}$. I.e. we are ``promising" to omit these types. Typically $\Phi_{\mathbf{s}}$ will be a union of $\aleph_1$-many closed nowhere dense sets, so in order to omit it we will need $\mathsf{Cov}(\mathcal{K}) \geq \aleph_2$.

By an appropriate failure of amalgamation, we will have that for each $\mathbf{s} \in 2^{<\omega_1}$, there is no $M, f_0, f_1$ such that: $f_i: A_{\mathbf{s}^\frown(i)} \to M$, $f_0 \restriction_{A_{\mathbf{s}}} = f_1 \restriction_{A_{\mathbf{s}}}$, and $M$ omits each $f_i(\Phi_{\mathbf{s}^\frown(i)})$. 

Then we will apply a diagonalization argument using $\Phi^*$ to get that $\{M_{\eta}: \eta \in 2^{\omega_1}\}$ contains $2^{\aleph_1}$ distinct isomorphism types.

In this subsection we develop some general machinery for building the tree $(A_{\mathbf{s}}, \Phi_{\mathbf{s}}: \mathbf{s} \in 2^{<\omega_1})$ and extracting $2^{\aleph_1}$ models of size $\aleph_1$. For the following, the reader should note that the special case $\mathbb{P} = \emptyset$ is actually an important example.
\begin{definition}
A \emph{system of promises} is a set $\mathbb{P}$ such that:

\begin{itemize}
\item Every $\Gamma \in \mathbb{P}$ is a nonempty subset of $S_{at}(A)$ for a (unique) base $A$. Write $A = \mbox{dom}(\Gamma)$.

If $A$ is an atomic set and $\Gamma \in P$, then say that $A$ omits $\Gamma$ if $A \supseteq \mbox{dom}(\Gamma)$ and for all $\overline{a} \in A$, $tp(\overline{a} / \mbox{dom}(\Gamma)) \not \in \Gamma$. If $\Phi \subseteq \mathbb{P}$ is countable and $A$ is an atomic set then say that $A$ omits $\Phi$ if $A$ omits $\Gamma$ for all $\Gamma \in \Phi$.

\item (Invariance) $\mathbb{P}$ is closed under $\mbox{Aut}(\mathfrak{C})$.

\item (Extendibility) Suppose $A$ is an atomic set, and $\Phi \subseteq \mathbb{P}$ is countable such that $A$ omits $\Phi$. Then for arbitrarily large $\alpha \in \mathbf{C}_0$ there is an $\alpha$-base $B \supseteq A$ such that $B$ omits $\Phi$.

\end{itemize}
\end{definition}

Suppose $\mathbb{P}$ is a system of promises. Then a $\mathbb{P}$-\emph{atomic set} ($\mathbb{P}$-\emph{base}, $(\alpha, \mathbb{P})$-\emph{atomic set}, $(\alpha, \mathbb{P})$-\emph{base}) is a pair $(A, \Phi)$ where $A$ is an atomic set (base, $\alpha$-atomic set, $\alpha$-base) and $\Phi \subset \mathbb{P}$ is countable and $A$ omits $\Phi$.

If $(A_0, \Phi_0)$ and $(A_1, \Phi_1)$ are $\mathbb{P}$-atomic sets, say that $(A_1, \Phi_1)$ extends $(A_0, \Phi_0)$, and write that $(A_0, \Phi_0) \subseteq (A_1, \Phi_1)$, if $A_0 \subseteq A_1$ and $\Phi_0 \subseteq \Phi_1$.

A $\mathbb{P}$-\emph{amalgamation problem} is a triple  of $\mathbb{P}$-atomic sets $(A_0, \Phi_0), (A_1, \Phi_1), (A_2, \Phi_2))$ where $(A_0, \Phi_0)$ is a $\mathbb{P}$-base and each $(A_i, \Phi_i)$ extends $(A_0, \Phi_0)$. We call $(A_0, \Phi_0)$ is called the base of the problem.

A \emph{solution} to the above problem is a sequence $((B, \Psi), f_1, f_2)$ where $(B, \Psi)$ is a $\mathbb{P}$-atomic set, and  $f_1: A_1 \to B$ and $f_2: A_2 \to B$ are both the identity on $A$, and $f_1(\Phi_1) \cup f_2(\Phi_2) \subseteq \Psi$.

Note that by the invariance property of promise systems, if two amalgamation problems are isomorphic then one has a solution if and only if the other does.

For $\alpha \in \mathbf{C}_0$, we say that $\mathbb{P}$ has the \emph{amalgamation property at} $\alpha$ if there is some $(\alpha, \mathbb{P})$-base $(A, \Phi)$, such that every $\mathbb{P}$-amalgamation problem with base $(A, \Phi)$ has a solution.

Let the \emph{amalgamation spectrum of} $\mathbb{P}$, $\mbox{Spec}_{\mathbf{K}_T}(\mathbb{P})$, be the set of $\alpha \in \mathbf{C}_0$ such that $\mathbb{P}$ has the amalgamation property at $\alpha$. We say that $\mathbb{P}$ \emph{has the club amalgamation property} if $\mbox{Spec}_{\mathbf{K}_T}(\mathbb{P})$ contains a club.

In particular, if $\mathbb{P} = \emptyset$ then $\mbox{Spec}_{\mathbf{K}_T}(\mathbb{P}) = \mbox{Spec}_{\mathbf{K}_T}(AP)$, and so $\mathbb{P}$ has the club amalgamation property iff $\mathbf{K}_T$ has the club amalgamation property.

The proof of the following (in a different context) is due originally to Shelah \cite{ShelahClass1}, see \cite{cat} Theorem 17.11 for a nice exposition.

\begin{lemma}\label{promises}
Suppose $\Phi^*$ holds, and $T$ admits a system of promises $\mathbb{P}$ which fails the club amalgamation property. Then $T$ has $2^{\aleph_1}$ atomic models of size $\aleph_1$. (In fact we just need $\Phi(\omega_1 \backslash \mbox{Spec}_{\mathbf{K}_T}(\mathbb{P}))$ to hold.)
\end{lemma}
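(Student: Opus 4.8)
The plan is to build, by recursion along $2^{<\omega_1}$, a tree $(A_{\mathbf{s}}, \Phi_{\mathbf{s}}: \mathbf{s} \in 2^{<\omega_1})$ of $\mathbb{P}$-bases, together with an increasing assignment $\mathbf{s} \mapsto \alpha_{\mathbf{s}} \in \mathbf{C}_0$ of the level-indices, so that $(A_{\mathbf{s}}, \Phi_{\mathbf{s}})$ is an $(\alpha_{\mathbf{s}}, \mathbb{P})$-base, the tree is continuous along branches (unions at limits, which is legitimate by the cofinal-union remark in the Setup, using Extendibility to keep omitting the accumulated $\Phi$'s), and crucially that at every $\mathbf{s}$ lying above some fixed club-complement we branch at a stage of \emph{failed} amalgamation. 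Concretely, fix $S := \omega_1 \setminus \mbox{Spec}_{\mathbf{K}_T}(\mathbb{P})$, which is stationary by hypothesis (the spectrum contains no club); we need $\Phi(S)$, which $\Phi^*$ supplies. Having arranged $\alpha_{\mathbf{s}} \in S$ for $\mathbf{s}$ of the relevant lengths, the amalgamation property fails at $\alpha_{\mathbf{s}}$, so we may choose extensions $(A_{\mathbf{s}^\frown 0}, \Phi_{\mathbf{s}^\frown 0})$ and $(A_{\mathbf{s}^\frown 1}, \Phi_{\mathbf{s}^\frown 1})$ witnessing an unsolvable $\mathbb{P}$-amalgamation problem over $(A_{\mathbf{s}}, \Phi_{\mathbf{s}})$: there is no $(B,\Psi)$ and no pair of maps $f_i: A_{\mathbf{s}^\frown i} \to B$ agreeing on $A_{\mathbf{s}}$ with $f_i(\Phi_{\mathbf{s}^\frown i}) \subseteq \Psi$. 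For $\eta \in 2^{\omega_1}$ set $M_\eta := \bigcup_{\alpha < \omega_1} A_{\eta \restriction \alpha}$; by the union remark each $M_\eta$ is an atomic model of $T$ of size $\aleph_1$, and $M_\eta$ omits $\Phi_{\eta \restriction \alpha}$ for every $\alpha$.

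Next I would set up the diagonalization that turns the branching into $2^{\aleph_1}$ nonisomorphic models. The idea is to code a potential isomorphism between $M_\eta$ and $M_{\eta'}$ as a function $\omega_1 \to \omega_1$ built from an enumeration of $\bigcup_{\mathbf{s}} A_{\mathbf{s}}$ and apply the convenient reformulation of $\Phi(S)$ allowing parameters $f_0, f_1: \omega_1 \to 2$ and $h: \omega_1 \to \omega_1$ (stated just before Theorem~\ref{split}). One fixes a coloring $F$ on $(2 \times 2 \times \omega_1)^{<\omega_1}$ that, reading off an initial segment of two branches $\eta_0, \eta_1$ and an initial segment of a candidate isomorphism $h$, outputs the ``correct'' branching bit that $h$ cannot match; $\Phi(S)$ produces a guessing function $g: S \to 2$ so that for all $\eta_0, \eta_1, h$ the guesses are stationarily correct. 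One then defines the target branch $\eta^* \in 2^{\omega_1}$ by $\eta^*(\alpha) = g(\alpha)$ on $S$ (and arbitrarily elsewhere), and argues that $M_{\eta^*}$ is isomorphic to $M_\eta$ for at most few $\eta$; running this over the $2^{\aleph_1}$ choices of the $g$-independent coordinates yields $2^{\aleph_1}$ pairwise nonisomorphic models. This is exactly the scheme of \cite{cat} Theorem 17.11, which I would cite for the bookkeeping.

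The main obstacle — and the step deserving the most care — is verifying that a single would-be isomorphism $\sigma: M_\eta \cong M_{\eta'}$ is genuinely defeated at a stationary set of stages, so that the guessing principle can catch it. The point is that any isomorphism is continuous: there is a club $C$ of $\alpha$ with $\sigma(A_{\eta \restriction \alpha}) = A_{\eta' \restriction \alpha}$ as sets, with $\sigma \restriction A_{\eta \restriction \alpha}$ a partial elementary bijection onto $A_{\eta'\restriction\alpha}$ respecting the omitted promises. At a stage $\alpha \in C \cap S$ where the branches split, $\sigma$ would restrict to a solution of the very amalgamation problem we arranged to be unsolvable (taking $B = M_{\eta'}$, $\Psi$ the omitted family, $f_0, f_1$ the restrictions of $\sigma$ and of the inclusion) — a contradiction. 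Making this clash line up precisely with the stationary-correctness of $g$ is the delicate bookkeeping: one must encode $\sigma$, $\eta$, $\eta'$ into the three parameters $f_0, f_1, h$ so that $F$ reads off exactly the bit forcing the contradiction. Everything else — continuity of the chain, Extendibility at limits, invariance of $\mathbb{P}$ to transport problems along automorphisms — is routine given the framework already developed.
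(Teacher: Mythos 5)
Your overall skeleton (a tree of $\mathbb{P}$-bases branching with unsolvable amalgamation problems at levels in $S = \omega_1 \setminus \mbox{Spec}_{\mathbf{K}_T}(\mathbb{P})$, followed by a weak-diamond diagonalization) matches one half of the paper's argument, but the step you yourself flag as the crucial one --- the claim that a would-be isomorphism $\sigma: M_\eta \cong M_{\eta'}$ ``restricts to a solution of the very amalgamation problem we arranged to be unsolvable'' --- is false as stated, and repairing it is where all the real work lies. The problem: once $\eta \neq \eta'$ have diverged, at every later stage $\beta$ the two branches pass through \emph{distinct} nodes $\mathbf{s} = \eta \restriction_\beta \neq \mathbf{t} = \eta' \restriction_\beta$. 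The unsolvable problem at $\mathbf{s}$ asks to amalgamate \emph{both} children $A_{\mathbf{s}^\frown(0)}$ and $A_{\mathbf{s}^\frown(1)}$ over $A_{\mathbf{s}}$; but $M_\eta$ contains only the one child $A_{\mathbf{s}^\frown \eta(\beta)}$, and $M_{\eta'}$ contains a child of $\mathbf{t}$, not of $\mathbf{s}$. So $\sigma$ only ever produces maps of one child of $\mathbf{s}$ and one child of $\mathbf{t}$ into a common set, never the two children of $\mathbf{s}$, and no contradiction follows. (At the unique node where $\eta$ and $\eta'$ diverge both children do appear, but you cannot arrange that this node's level lies in $S$, nor that $\sigma$ fixes, or even preserves setwise, $A_{\mathbf{s}}$ there --- controlling what $\sigma$ does on $A_{\mathbf{s}}$ is exactly what the guessing principle is needed for, and it can only guess at stationarily many levels, i.e.\ far beyond the divergence point.)

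The paper's fix is a dichotomy that your proposal is missing. \textbf{Case A:} there exist $(A_0,\Phi_0) \subseteq (A,\Phi)$ such that above every extension of $(A,\Phi)$ one can branch with a problem unsolvable over the \emph{fixed} base $(A_0,\Phi_0)$. Then an isomorphism fixing $A_0$ pointwise does solve an unsolvable problem at the divergence node (both children of that node sit over the same $A_0$), so the $M_\eta$ are pairwise nonisomorphic over constants for $A_0$, and $2^{\aleph_0} < 2^{\aleph_1}$ finishes --- no guessing needed at all. \textbf{Case B} (the negation): now the tree can be built with an extra property, namely that over each $(A_{\mathbf{s}},\Phi_{\mathbf{s}})$ any two $\mathbb{P}$-bases extending a fixed child $(A_{\mathbf{s}^\frown(i)},\Phi_{\mathbf{s}^\frown(i)})$ can be amalgamated; this property is exactly what the Case B hypothesis buys, and it is indispensable below. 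The weak diamond is then applied not to ``the bit $h$ cannot match'' but to the cross-question $F(\mathbf{s},\mathbf{t},h)$: does the $h$-image of $A_{\mathbf{s}^\frown(0)}$ amalgamate with $A_{\mathbf{t}^\frown(0)}$ over $A_{\mathbf{t}}$? If $F = 0$, one sets $\eta(\beta) = 0$ and $\sigma$ itself exhibits such an amalgam, contradiction; if $F = 1$, one sets $\eta(\beta) = 1$, obtains from $\sigma$ an amalgam of the image of $A_{\mathbf{s}^\frown(1)}$ with $A_{\mathbf{t}^\frown(0)}$, and then combines the two amalgams over $(A_{\mathbf{t}},\Phi_{\mathbf{t}})$ using the extra property of the tree, producing a solution to the unsolvable problem at $\mathbf{s}$ --- contradiction. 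Finally, the paper gets $2^{\aleph_1}$ models by partitioning $S$ into $\omega_1$ disjoint stationary sets $(S_\alpha : \alpha < \omega_1)$ via Theorem~\ref{split}(2) and coding each $X \subseteq \omega_1$ by a branch $\eta_X$; your closing remark about ``$g$-independent coordinates'' needs this device to be made precise. Without the Case A/Case B split, neither the constants trick nor the two-amalgam combination is available, so the diagonalization as you describe it cannot be completed.
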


The rest of this subsection is a proof of the lemma. Note that $T$ has at most one countable atomic model, so it suffices to show that $T$ has $2^{\aleph_1}$ atomic models of size $\leq \aleph_1$.

Let $S = \mathbf{C}_0 \backslash \mbox{Spec}_{\mathbf{K}_T}(\mathbb{P})$. We are assuming that $S$ is stationary; thus $\Phi(S)$ holds (and in particular $2^{\aleph_0} < 2^{\aleph_1}$).

The proof splits into two cases.

\vspace{2 mm}

\noindent \textbf{Case A.} There exist $\mathbb{P}$-bases $(A_0, \Phi_0) \subseteq (A, \Phi)$, such that for every $\mathbb{P}$-base $(B, \Psi) \supseteq (A, \Phi)$, there exist $\mathbb{P}$-bases $(B_0, \Psi_0)$ and $(B_1, \Psi_1)$ extending $(B, \Psi)$, such that the $\mathbb{P}$-amalgamation problem $((A_0, \Phi_0), (B_0, \Psi_0), (B_1, \Psi_1))$ has no solution.

In this case we build inductively a system $(A_{\mathbf{s}}, \Phi_{\mathbf{s}}, \alpha_{\mathbf{s}}: \mathbf{s} \in 2^{<\omega_1})$ such that:

\begin{itemize}
\item For each $\mathbf{s} \in 2^{<\omega_1}$, $\alpha_{\mathbf{s}} \in \mathbf{C}_0$ and $(A_{\mathbf{s}}, \Phi_{\mathbf{s}})$ is an $(\alpha_{\mathbf{s}}, \mathbb{P})$-base.

\item For $\mathbf{s} \subseteq \mathbf{t}$, $\alpha_{\mathbf{s}} < \alpha_{\mathbf{t}}$, and $(A_{\mathbf{s}}, \Phi_{\mathbf{s}}) \subseteq (A_{\mathbf{t}}, \Phi_{\mathbf{t}})$.

\item For each $\mathbf{s} \in 2^{<\omega_1}$ of limit length, $\alpha_{\mathbf{s}} = \bigcup_{\mathbf{t} \subset \mathbf{s}} \alpha_{\mathbf{t}}$ and $A_{\mathbf{s}} = \bigcup_{\mathbf{t} \subset \mathbf{s}} A_{\mathbf{t}}$ and $\Phi_{\mathbf{s}} = \bigcup_{\mathbf{t} \subset \mathbf{s}} \Phi_{\mathbf{t}}$.

\item For each $\mathbf{s} \in 2^{<\omega_1}$, the $\mathbb{P}$-amalgamation problem $((A_0, \Phi_0), (A_{\mathbf{s}^\frown(0)}, \Phi_{\mathbf{s}^\frown(0)}), (A_{\mathbf{s}^\frown(1)}, \Phi_{\mathbf{s}^\frown(1)}))$ has no solution.
\end{itemize}

For each $\eta \in 2^{\omega_1}$, let $M_\eta = \bigcup_{\alpha < \omega_1} A_{\eta \restriction_\alpha}$, an atomic model of $T$ of size $\leq \aleph_1$. Then for each $\eta \not= \tau$, $(M_\eta, a: a \in A_0) \not \cong (M_\tau, a: a \in A_0)$. Hence $\{M_\eta: \eta \in 2^{\aleph_1}\}$ represents $2^{\aleph_1}$ different isomorphism types if we add countably many constants. Since $2^{\aleph_0} < 2^{\aleph_1}$ it follows that $\{M_\eta: \eta \in 2^{\aleph_1}\}$ represents $2^{\aleph_1}$ different isomorphism types.

\vspace{2 mm}

\noindent \textbf{Case B.} (The negation of Case A.) For all $\mathbb{P}$-bases $(A_0, \Phi_0) \subseteq (A, \Phi)$ there is $(B, \Psi) \supseteq (A, \Phi)$ such that for all $(B_0, \Psi_0)$ and $(B_1, \Psi_1)$ extending $(B, \Psi)$, the $\mathbb{P}$-amalgamation problem $((A_0, \Phi_0), (B_0, \Psi_0), (B_1, \Psi_1))$ has a solution.

In this case we inductively build a system $(A_{\mathbf{s}}, \Phi_{\mathbf{s}}, \alpha_{\mathbf{s}} : \mathbf{s} \in 2^{<\omega_1})$ such that:

\begin{itemize}
\item For each $\mathbf{s} \in 2^{<\omega_1}$, $\alpha_{\mathbf{s}} \in \mathbf{C}_0$ and $(A_{\mathbf{s}}, \Phi_{\mathbf{s}})$ is an $(\alpha_{\mathbf{s}}, \mathbb{P})$-base.

\item For $\mathbf{s} \subseteq \mathbf{t}$, $\alpha_{\mathbf{s}} < \alpha_{\mathbf{t}}$, and $(A_{\mathbf{s}}, \Phi_{\mathbf{s}}) \subset (A_{\mathbf{t}}, \Phi_{\mathbf{t}})$.

\item For each $\mathbf{s} \in 2^{<\omega_1}$ of limit length, $\alpha_{\mathbf{s}} = \bigcup_{\mathbf{t} \subset \mathbf{s}} \alpha_{\mathbf{t}}$ and $A_{\mathbf{s}} = \bigcup_{\mathbf{t} \subset \mathbf{s}} A_{\mathbf{t}}$ and $\Phi_{\mathbf{s}} = \bigcup_{\mathbf{t} \subset \mathbf{s}} \Phi_{\mathbf{t}}$.

\item For each $\mathbf{s} \in 2^{<\omega_1}$ with $\alpha_{\mathbf{s}} \in S$, the $\mathbb{P}$-amalgamation problem  $((A_{\mathbf{s}}, \Phi_{\mathbf{s}}), (A_{\mathbf{s}^\frown(0)}, \Phi_{\mathbf{s}^\frown(0)})$, $(A_{\mathbf{s}^\frown(1)}, \Phi_{\mathbf{s}^\frown(1)}))$ has no solution.

\item For each $\mathbf{s} \in 2^{<\omega_1}$, for each $i \in 2$ and for each pair of $\mathbb{P}$-bases $(B_0, \Psi_0)$ and $(B_1, \Psi_1)$ extending $(A_{\mathbf{s}^\frown(i)}, \Phi_{\mathbf{s}^\frown(i)})$, the $\mathbb{P}$-amalgamation problem $((A_{\mathbf{s}}, \Phi_{\mathbf{s}}), (B_0, \Psi_0), (B_1, \Psi_1))$ has a solution. 
\end{itemize}

For each $\eta \in 2^{\omega_1}$ let $M_\eta := \bigcup_{\alpha < \omega_1} A_{\mathbf{s} \restriction_\alpha}$, an atomic model of $T$ of size $\leq \aleph_1$. I claim that in fact each $M_\eta$ has size exactly $\aleph_1$. Indeed, fix $\eta \in 2^{\omega_1}$. Then there are uncountably many $\alpha < \omega_1$ such that $\alpha = \alpha_{\eta \restriction_{\alpha}} \in S$, so it suffices to show that for each such $\alpha$, $A_{\eta \restriction_\alpha}$ is strictly contained in $A_{\eta \restriction_{\alpha+1}}$. Suppose not; set $A = A_{\eta \restriction_\alpha} = A_{\eta \restriction_{\alpha+1}}$ and set $\Phi = \Phi_{\eta \restriction_{\alpha+1}} \supseteq \Phi_{\eta \restriction_\alpha}$. Then $(A, \Phi)$ is an $(\alpha, \mathbb{P})$ base, but every $\mathbb{P}$-amalgamation problem with base $(A, \Phi)$ must have a solution by the final requirement above, contradicting $\alpha \in S$.

Choose bijections $\sigma_\eta: M_\eta \to \omega_1$, such that for all $\eta, \tau \in 2^{\omega_1}$ with $\eta \restriction_\alpha = \tau \restriction_\alpha = \mathbf{s}$ say, we have that $\sigma_\eta \restriction_{A_{\mathbf{s}}} = \sigma_\tau \restriction_{A_{\mathbf{s}}} := \sigma_{\mathbf{s}}$. 

We view $(2 \times 2 \times \omega_1)^{<\omega_1}$ as a subset of $2^{<\omega_1} \times 2^{<\omega_1} \times \omega_1^{<\omega_1}$. Define $F: (2 \times 2 \times \omega_1)^{<\omega_1} \to 2$ by  $F(\mathbf{s}, \mathbf{t}, h) = 1$ if:

\begin{itemize}
\item $\sigma_{\mathbf{s}}(A_{\mathbf{s}}) = \sigma_{\mathbf{t}}(A_{\mathbf{t}}) = \alpha_{\mathbf{s}} = \alpha_{\mathbf{t}} = \lg(\mathbf{s}) = \lg(\mathbf{t}) =: \alpha$ say.
\item $h: \alpha \to \alpha$ is a bijection.
\item $\sigma_{\mathbf{t}}^{-1} \circ h \circ \sigma_{\mathbf{s}}: A_{\mathbf{s}} \cong A_{\mathbf{t}}$.
\item For some or any extension $g$ of $\sigma_{\mathbf{t}}^{-1} \circ h \circ \sigma_{\mathbf{s}}$ to $A_{\mathbf{s}^\frown(0)}$, the $\mathbb{P}$-amalgamation problem $((A_{\mathbf{t}}, g(\Phi_{\mathbf{s}}) \cup \Phi_{\mathbf{t}}), (g(A_{\mathbf{s}^\frown(0)}), g(\Phi_{\mathbf{s}^\frown(0)}) \cup \Phi_{\mathbf{t}}), (A_{\mathbf{t}^\frown(0)}, g(\Phi_{\mathbf{s}}) \cup \Phi_{\mathbf{t}^\frown(0)}))$ has a solution.
\end{itemize}
$F(\mathbf{s}, \mathbf{t}, h) = 0$ else.

Now choose disjoint stationary subsets $(S_\alpha: \alpha < \omega_1)$ of $S$ ; then $\Phi(S_\alpha)$ holds for each $\alpha < \omega_1$. For each $\alpha < \omega_1$ choose $g_\alpha: S_\alpha \to 2$ such that for every $(\eta, \tau, f) \in (2 \times 2 \times \omega_1)^{\omega_1}$, the set of all $\beta \in S_\alpha$ with $g_\alpha(\beta) = F(\eta \restriction_\beta, \tau \restriction_\beta, f \restriction_\beta)$ is stationary.

For $X \subset \omega_1$ define $\eta_X : \omega_1 \to 2$ by: $\eta_X(\beta) = g_\alpha(\beta)$ if $\beta \in S_\alpha$ and $\alpha \in X$, and $\eta_X(\beta) = 0$ else.

I claim that for all $X \not= Y$, $M_X \not \cong M_Y$, which suffices.

Indeed, suppose $X \not= Y$ and yet $f: \omega_1 \to \omega_1$ is a bijection with $\phi := \sigma_{\tau}^{-1} \circ f \circ \sigma_{\eta}: M_\eta \cong M_\tau$. We can suppose $\alpha \in X \backslash Y$. Let $\eta = \eta_X$ and let $\tau = \tau_Y$. Let $C$ be the club set of all $\beta < \omega_1$ such that $\sigma_{\eta \restriction_\beta}(A_{\eta \restriction_\beta}) = \sigma_{\tau \restriction_\beta}(A_{\tau \restriction_\beta}) = \alpha_{\eta \restriction_\beta} = \alpha_{\tau \restriction_\beta} = f[\beta]= \beta$. 

Choose $\beta \in S_\alpha \cap C$ such that $F(\eta \restriction_\beta, \tau \restriction_\beta, f \restriction_\beta) = g_\alpha(\beta)$. Write $\mathbf{s} = \eta \restriction_\beta$, $\mathbf{t} = \tau \restriction_\beta$, $h = f \restriction_\beta$.

Note that the first two items of the definition of $F$ are met, so $F(\mathbf{s}, \mathbf{t}, h) = 1$ iff the third item holds. Also note that $\tau(\beta) = 0$.
There are two cases:

\vspace{2 mm}

\noindent \textbf{Case B0.} $F(\mathbf{s}, \mathbf{t}, h) = 0$. Then $\eta(\beta) = 0$. But then clearly the isomorphism $\phi: M_\eta \cong M_\tau$ witnessses that the $\mathbb{P}$-amalgamation problem in the third item of the definition of $F$ has a solution, contradicting the case.

\vspace{2 mm}

\noindent \textbf{Case B1.} $F(\mathbf{s}, \mathbf{t}, h) = 1$. Then $\eta(\beta) = 1$. Let $g$ be any extension of $\phi$ to $A_{\mathbf{s}^\frown(0)}$; then we can choose a solution $(i_0, i_1, (B, \Psi_0))$ to the $\mathbb{P}$-amalgamation problem $((A_{\mathbf{t}}, g(\Phi_{\mathbf{s}}) \cup \Phi_{\mathbf{t}}), (g(A_{\mathbf{s}^\frown(0)}), g(\Phi_{\mathbf{s}^\frown(0)}) \cup \Phi_{\mathbf{t}}), (A_{\mathbf{t}^\frown(0)}, g(\Phi_{\mathbf{s}}) \cup \Phi_{\mathbf{t}^\frown(0)}))$, where moreover $i_1: A_{\mathbf{t}^\frown(0)} \to B$ is the inclusion. We can use the isomorphism $\phi$ to get a solution $(j_0, j_1, (C, \Psi_1))$ to the $\mathbb{P}$-amalgamation problem $((A_{\mathbf{t}}, \phi(\Phi_{\mathbf{s}}) \cup \Phi_{\mathbf{t}}), (\phi(A_{\mathbf{s}^\frown(1)}), \phi(\Phi_{\mathbf{s}^\frown(1)}) \cup \Phi_{\mathbf{t}}), (A_{\mathbf{t}^\frown(0)}, \phi(\Phi_{\mathbf{s}}) \cup \Phi_{\mathbf{t}^\frown(0)}))$ where again $j_1: A_{\mathbf{t}^\frown(0)} \to C$ is the inclusion. Then by the construction of the system $(A_\mathbf{s}, \Phi_{\mathbf{s}}, \alpha_{\mathbf{s}})$, the $\mathbb{P}$-amalgamation problem $((A_\mathbf{t}, \Phi_{\mathbf{t}}), (B, \Psi_0), (C, \Psi_1))$ has a solution. But this yields a solution to the $\mathbb{P}$-amalgamation problem $((A_{\mathbf{s}}, \Phi_{\mathbf{s}}), (A_{\mathbf{s}^\frown(0)}, \Phi_{\mathbf{s}^\frown(0)})$, $(A_{\mathbf{s}^\frown(1)}, \Phi_{\mathbf{s}^\frown(1)}))$, contradiction.

\subsection{Proof of Theorem~\ref{main}}

Throughout this section, we suppose $\Phi^*$ holds and $\mathsf{Cov}(\mathcal{K}) \geq \aleph_2$, and $\mathbf{K}_T$ is not club totally transcendental. We aim to construct $2^{\aleph_1}$ atomic models of $T$ of size $\aleph_1$.

Recall that if we let $\mathbb{P} = \emptyset$ be the empty system of promises, then $\mbox{Spec}_{\mathbf{K}_T}(\mathbb{P}) = \mbox{Spec}_{\mathbf{K}_T}(AP)$; so if $\mathbf{K}_T$ fails the club amalgamation property then by Lemma~\ref{promises} we are done. Hence we can suppose that $\mathbf{K}_T$ has the club amalgamation property.

\begin{lemma}\label{typeAmalg}
If $\alpha \in \mbox{Spec}_{\mathbf{K}_T}(AP)$, $A$ is an $\alpha$-base, $B \supseteq A$ is atomic, and $p(\overline{x}) \in S_{at}(A)$, then $p(\overline{x})$ extends to a type in $S_{at}(B)$.
\end{lemma}
\begin{proof}
Let $\overline{a}$ be a realization of $p(\overline{x})$. Then the amalgamation problem $(A, A \overline{a}, B)$ has a solution, which is equivalent to the claim.
\end{proof}

\begin{lemma}
There is a club $\mathbf{C}_1 \subseteq \mbox{Spec}_{\mathbf{K}_T}(AP)$ and a number $n_0$ such that for every $\alpha < \beta$ both in $\mathbf{C}_1$, if $A$ is the $\alpha$-base then $S^{n_0, \beta}_{at}(A)$ has size continuum. In particular, $\mbox{Spec}_{\mathbf{K}_T}(t.t.) \cap \mathbf{C}_1 = \emptyset$.
\end{lemma}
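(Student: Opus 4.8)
The plan is to locate a single arity $n_0$ and, for each base level, a single language level $\beta$ that together witness the failure of total transcendence \emph{uniformly}, and then to propagate this witness upward along a chain of bases using amalgamation. First I would fix once and for all a continuous increasing chain $(A_\alpha : \alpha \in \mathbf{C}_0)$ with each $A_\alpha$ an $\alpha$-base and $A_\alpha \subseteq A_{\alpha'}$ for $\alpha \le \alpha'$; this is possible since unions of increasing chains of bases are again bases. Since $\mathbf{K}_T$ has the club amalgamation property we may fix a club $\mathbf{C}' \subseteq \mbox{Spec}_{\mathbf{K}_T}(AP)$. Because $\mathbf{K}_T$ is not club totally transcendental, $\mbox{Spec}_{\mathbf{K}_T}(t.t.)$ contains no club, so $S_0 := \mathbf{C}' \setminus \mbox{Spec}_{\mathbf{K}_T}(t.t.)$ is stationary. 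For each $\alpha \in S_0$ there are $n,\beta$ with $S^{n,\beta}_{at}(A_\alpha)$ uncountable; as a stationary set is not a countable union of nonstationary sets, there is a single $n_0$ for which $S_1 := \{\alpha \in S_0 : \exists \beta \in \mathbf{C}_0,\ S^{n_0,\beta}_{at}(A_\alpha)\text{ uncountable}\}$ is stationary. I will use throughout the monotonicity $S^{n_0,\beta}_{at}(A) \subseteq S^{n_0,\beta'}_{at}(A)$ for $\beta \le \beta'$ (a $\beta$-isolating formula is also a $\beta'$-formula), together with the fact that an uncountable Polish space has size continuum, so it suffices to produce a club on which the relevant spaces are merely uncountable.

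The key step is an upward-transfer claim: if $\alpha < \alpha'$ lie in $\mbox{Spec}_{\mathbf{K}_T}(AP)$ and $S^{n_0,\beta}_{at}(A_\alpha)$ is uncountable for some $\beta$, then $S^{n_0,\beta'}_{at}(A_{\alpha'})$ is uncountable for some $\beta'$. Indeed $S^{n_0,\beta}_{at}(A_\alpha)$ then has size continuum, and by Lemma~\ref{typeAmalg} (amalgamation at $\alpha$) each of its types extends to a type in $S_{at}(A_{\alpha'})$; distinct types restrict back to distinct types, so $|S^{n_0}_{at}(A_{\alpha'})| \ge 2^{\aleph_0} \ge \aleph_2$, using $\mathsf{Cov}(\mathcal{K}) \ge \aleph_2$. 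But $S^{n_0}_{at}(A_{\alpha'}) = \bigcup_{\gamma \in \mathbf{C}_0} S^{n_0,\gamma}_{at}(A_{\alpha'})$ is an increasing union of at most $\aleph_1$ sets, so some piece must itself be uncountable. Consequently the set $U := \{\alpha \in \mbox{Spec}_{\mathbf{K}_T}(AP) : \exists \beta,\ S^{n_0,\beta}_{at}(A_\alpha)\text{ uncountable}\}$ is upward closed in $\mbox{Spec}_{\mathbf{K}_T}(AP)$ and contains the stationary set $S_1$; taking $\gamma_0 = \min S_1 \in U$, upward closure yields that $U$ contains the club tail $\mathbf{C}'' := \mathbf{C}' \cap (\gamma_0, \omega_1)$.

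Finally I would thin $\mathbf{C}''$ so that consecutive levels witness uncountability. For $\alpha \in \mathbf{C}''$ let $\beta(\alpha) \in \mathbf{C}_0$ be least with $\beta(\alpha) > \alpha$ and $S^{n_0,\beta(\alpha)}_{at}(A_\alpha)$ uncountable (this exists since $\alpha \in U$, by monotonicity), and let $D$ be the club of ordinals closed under $\alpha \mapsto \beta(\alpha)$. Set $\mathbf{C}_1 := \mathbf{C}'' \cap D \subseteq \mbox{Spec}_{\mathbf{K}_T}(AP)$. If $\alpha < \beta$ both lie in $\mathbf{C}_1$, then $\beta(\alpha) < \beta$ because $\beta \in D$, so by monotonicity $S^{n_0,\beta}_{at}(A_\alpha) \supseteq S^{n_0,\beta(\alpha)}_{at}(A_\alpha)$ is uncountable, hence of size continuum. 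Since $\mathbf{C}_1$ is unbounded, for each $\alpha \in \mathbf{C}_1$ such a $\beta$ exists, so no $\alpha \in \mathbf{C}_1$ can lie in $\mbox{Spec}_{\mathbf{K}_T}(t.t.)$, which gives the final clause.

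The main obstacle is precisely the upward-transfer step: one must pass from ``some level $\beta$ works for $A_\alpha$'' to ``some level works for the larger base $A_{\alpha'}$,'' and the only reason a \emph{single} level survives at $A_{\alpha'}$ — rather than the continuum-many extended types dispersing cofinally through the $\aleph_1$ many languages $\mathcal{L}_\gamma$ — is the inequality $2^{\aleph_0} > \aleph_1$ supplied by $\mathsf{Cov}(\mathcal{K}) \ge \aleph_2$, which forbids an increasing $\aleph_1$-union of countable sets from reaching size continuum. Amalgamation (Lemma~\ref{typeAmalg}) is what makes the transfer injective by guaranteeing that atomic types over $A_\alpha$ extend to atomic types over $A_{\alpha'}$.
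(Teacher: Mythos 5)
Your proposal is correct and takes essentially the same route as the paper: the heart of both arguments is the upward-transfer step combining Lemma~\ref{typeAmalg} with the cardinality bound $\aleph_1 < 2^{\aleph_0}$ supplied by $\mathsf{Cov}(\mathcal{K}) \geq \aleph_2$ (a union of $\aleph_1$-many Polish pieces of size continuum must have an uncountable, hence perfect, piece), followed by a standard closing-off club construction, which the paper implements by iterating a function $f$ from a single starting point and you implement via closure points of $\alpha \mapsto \beta(\alpha)$ on a tail. The only inessential difference is your stationary-set pigeonhole to fix $n_0$: the paper avoids it by choosing one $\alpha \in C \setminus \mbox{Spec}_{\mathbf{K}_T}(t.t.)$ and noting that its witnessing $n_0$ propagates upward automatically through the transfer step.
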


\begin{proof}
We can choose some club $C \subseteq \mbox{Spec}_{\mathbf{K}_T}(AP)$ by assumption.

Let $\alpha \in C \backslash \mbox{Spec}_{\mathbf{K}_T}(t.t.)$. Let $A$ be an $\alpha$-base. Let $n_0$ be such that $S^{n_0}_{at}(A)$ has size continuum. Let $\beta > \alpha$ with $\beta \in C$ and let $B \supseteq A$ be a $\beta$-base. Then by Lemma~\ref{typeAmalg}, $S^{n_0}_{at}(B)$ has size continuum. It follows that $S^{n_0}_{at}(B)$ has a perfect subset, since otherwise we would have $|S^{n_0}_{at}(B)| \leq \aleph_1 < 2^{\aleph_0}$. Hence there is some $f(\beta)$ such that $S^{n_0, \beta}_{at}(B)$ has size continuum; we can choose $f(\beta) \in C \backslash \beta$. 

Let $\mathbf{C}_1$ be the club $(\alpha, f(\alpha), f^2(\alpha), \ldots, f^\gamma(\alpha), \ldots)$ (take unions at limit stages).
\end{proof}

For the rest of the proof, fix $\mathbf{C}_1, n_0$ as above. For each ordinal $\alpha < \omega_1$ let $\alpha^+$ denote the least ordinal $\beta > \alpha$ with $\beta \in \mathbf{C}_1$. So for any $\alpha \in \mathbf{C}_1$ and for any $\alpha$-base $A$, $S^{n_0, \alpha^+}_{at}(A)$ has size continuum. Let $K(A)$ denote the perfect kernel of $S^{n_0, \alpha^+}_{at}(A)$. 

The following definition gives a nice description of $K(A)$.

\begin{definition}
Let $\alpha \in \mathbf{C}_1$, and let $\phi(\overline{x}; \overline{y})$ be a partitioned $\mathcal{L}_{\alpha^+}$ formula with $|\overline{x}| = n_0$. Then say that $\phi(\overline{x}; \overline{y})$ is $\alpha$-\emph{unbounded} if for some (any) $\alpha$-base $A$, there is some $p(\overline{x}) \in K(A)$ and some $\overline{a} \in A$ with $\phi(\overline{x}; \overline{a}) \in p(\overline{x})$. Note that, since $K(A)$ is fixed under $A$-automorphisms, we have that $p(\overline{x}) \in K(A)$ if and only if for all $\phi(\overline{x}; \overline{a}) \in p(\overline{x})$, $\phi(\overline{x}; \overline{y})$ is $\alpha$-unbounded.
\end{definition}

Now fix for the time being $\alpha \in \mathbf{C}_1$ and an $\alpha$-base $A$. We identify closed subsets of $S^n_{at}(A)$ with the corresponding partial $n$-types over $A$. So for instance if $C \subseteq S^n_{at}(A)$ is closed then we write $C(\overline{x}) \models \phi(\overline{x})$ to indicate that for every $p(\overline{x}) \in C$, $\phi(\overline{x}) \in p(\overline{x})$. Let $\Phi_{at}(A)$ denote the subsets of $S_{at}(A)$ which are in fact closed subsets of $S^{n, \beta}_{at}(A)$ for some $n \in \omega$ and some $\beta \in \mathbf{C}_1$. For example, each $S^{n, \beta}_{at}(A) \in \Phi_{at}(A)$.

We define a (pre)-partial ordering $\leq$ on $\Phi_{at}(A)$, with the idea that $C \leq D$ means that if we realize $D$ over $A$, then it is hard to realize $C$ over $A$.

First we define the immediate successors of $\leq$:  

\begin{definition}\label{precDef} Let $C, F \in \Phi_{at}(A)$ be given. Then $C \prec F$ if and only if one of the following holds:

\begin{enumerate}
\item For some $\beta \geq \alpha$ in $\mathbf{C}_1$, $C(y, \overline{z})$ is a closed subset of $S^{1+n, \beta}_{at}(A)$, $F(\overline{z}, \overline{w})$ is a closed subset of $S^{n+m, \beta}_{at}(A)$, and there is some $\mathcal{L}_\beta$-formula $\phi(y, \overline{z}, \overline{w})$ such that $F(\overline{z}, \overline{w})$ is defined by the intersection of the following closed sets:

\begin{itemize}
\item $S^{n+m, \beta}_{at}(A)$;
\item $``\exists y \phi(y, \overline{z}, \overline{w})$;"
\item $``\forall y (\phi(y, \overline{z}, \overline{w}) \rightarrow \psi(y, \overline{z}, \overline{d}))$" for each formula $\psi(y, \overline{z}, \overline{d})$ with $C(y, \overline{z}) \models \psi(y, \overline{z}, \overline{d})$.
\end{itemize}

So, whenever $B \supseteq A$ is $\beta$-atomic and whenever $\overline{a} \overline{b} \in B^{n+m}$, then $tp(\overline{a} \overline{b}/A) \in F$ iff the following holds: there is some $q(y) \in S^{1, \beta}_{at}(B)$ with $q(y) \models \phi(y, \overline{a}, \overline{b})$, and moreover, for any such $q(y)$, if we let $r(y, \overline{z})$ be the set of all $\mathcal{L}(A)$-formulas $\psi(y, \overline{z})$ such that $q(y) \models \psi(y, \overline{a})$, then $r(y, \overline{z}) \in C$.

\item For some $\beta \geq \alpha$ in $\mathbf{C}_1$, $C(\overline{y}, \overline{z}) \subseteq S^{n_0 + n, \beta^+}_{at}(A)$ is closed, $F(\overline{z}, \overline{w})$ is a closed subset of $S^{n+m, \beta}_{at}(A)$, and there is some $\beta$-unbounded, complete formula $\phi(\overline{y}; \overline{z}, \overline{w})$ such that $F(\overline{z}, \overline{w})$ is defined by the intersection of the following closed sets:

\begin{itemize}
\item $S^{n+m, \beta}_{at}(A)$;
\item ``$\exists \overline{y} \phi(\overline{y}, \overline{z}, \overline{w});$"
\item ``$\forall \overline{y}  (\tau(\overline{y}, \overline{z}, \overline{w}, \overline{d}) \land \phi(\overline{y}, \overline{z}, \overline{w}) \rightarrow \psi(\overline{y}, \overline{z}, \overline{d})),"$ for all $\beta$-unbounded, complete formulas $\tau(\overline{y}; \overline{z}, \overline{w}, \overline{u})$, and all $\mathcal{L}(A)$-formulas $\psi(\overline{y}, \overline{z}, \overline{d})$ such that $|\overline{d}| = |\overline{u}|$ and such that $C(\overline{y}, \overline{z}) \models \psi(\overline{y}, \overline{z}, \overline{d})$.
\end{itemize}

So, whenever $B \supseteq A$ is a $\beta$-base and whenever $\overline{a} \overline{b} \in B^{n+m}$, we have $tp(\overline{a} \overline{b}/A) \in F$ iff the following holds: there is some $q(\overline{y}) \in K(B)$ with $q(\overline{y}) \models \phi(\overline{y}, \overline{a}, \overline{b})$, and moreover, for any such $q(\overline{y})$, if we let $r(\overline{y}, \overline{z})$ be the set of all $\mathcal{L}(A)$-formulas $\psi(\overline{y}, \overline{z})$ such that $q(\overline{y}) \models \psi(\overline{y}, \overline{a})$, then $r(\overline{y}, \overline{z}) \in C$.
\end{enumerate}
\end{definition}

Now let $\leq$ be the the least partial order containing $\prec$, i.e. $C \leq F$ iff there is a sequence $C = C_0 \prec C_1 \prec \ldots \prec C_{n-1} = F$.

Note that for each $C$, there are at most $\aleph_1$-many $F$ with $C \prec F$, and so there are at most $\aleph_1$-many $F$ with $C \leq F$.

Given $p(\overline{x}) \in S_{at}(A)$, let $\Gamma(p(\overline{x})) = \bigcup_{C \geq \{p(\overline{x})\}} C$, so this is the union of $\aleph_1$-many closed subsets of $S_{at}(A)$. 

Let $K^*(A) := \{p(\overline{x}) \in K(A): A \mbox{ omits } \Gamma(p(\overline{x}))\}$. Finally let $\mathbb{P} = \{\Gamma(p(\overline{x})): p(\overline{x}) \in K^*(A) \mbox{ where $A$ is an $\alpha$-base for some $\alpha \in \mathbf{C}_1$}\}$.

Then it suffices to establish that $\mathbb{P}$ is a system of promises, with $\mbox{Spec}_{\mathbf{K}_T}(\mathbb{P}) \cap \mathbf{C}_1 = \emptyset$. Towards this we prove the following three lemmas.

\begin{lemma} \label{amalg1} For every $\alpha \in \mathbf{C}_1$ and every $\alpha$-base $A$, $|K(A) \backslash K^*(A)| \leq \aleph_1$, in particular $K^*(A)$ is $\aleph_1$-comeager in $K(A)$.
\end{lemma}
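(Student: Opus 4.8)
The plan is to bound $|K(A) \setminus K^*(A)|$ by $\aleph_1$ directly; the comeagerness clause is then automatic, since $K(A)$ is a perfect Polish space (it is the perfect kernel of the Polish space $S^{n_0,\alpha^+}_{at}(A)$), so every singleton is nowhere dense and any subset of size $\le \aleph_1$ is a union of $\aleph_1$-many nowhere dense sets, i.e. $\aleph_1$-meager. A point $p \in K(A)$ fails to lie in $K^*(A)$ exactly when $A$ does not omit $\Gamma(p(\overline{x}))$, i.e. when there is a finite tuple $\overline{a} \in A$ with $q := tp(\overline{a}/A) \in \Gamma(p(\overline{x}))$. Since $A$ is countable there are only countably many such realized types $q$, so it suffices to fix one realized $q$ and bound $B_q := \{p \in K(A) : q \in \Gamma(p(\overline{x}))\}$ by $\aleph_1$.

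First I would record a \emph{forward-determinism} observation. By definition of $\le$, we have $\{p\} \le C$ iff there is a $\prec$-chain $\{p\} = C_0 \prec C_1 \prec \cdots \prec C_k = C$, and each step $C_i \prec C_{i+1}$ is functional: by Definition~\ref{precDef}, $C_{i+1}$ is completely determined by $C_i$ together with the choice of case (1 or 2), the level $\beta_i \in \mathbf{C}_1$, and the witnessing formula $\phi_i$. Writing $\vec{\phi} = \langle(\mathrm{case}_i, \beta_i, \phi_i) : i < k\rangle$ and $C_k(\{p\}, \vec{\phi})$ for the resulting top set (when the steps are legitimate), we get $\Gamma(p(\overline{x})) = \bigcup_{\vec{\phi}} C_k(\{p\}, \vec{\phi})$. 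Each $\phi_i$ is an $\mathcal{L}_{\beta_i}$-formula with $\beta_i < \omega_1$, so there are at most $\aleph_1$ choices per coordinate and at most $\aleph_1^{<\omega} = \aleph_1$ finite sequences $\vec{\phi}$ in all. Thus $q \in \Gamma(p(\overline{x}))$ iff $q \in C_k(\{p\}, \vec{\phi})$ for some $\vec{\phi}$, and it suffices to show that for each fixed $\vec{\phi}$ there is \emph{at most one} $p \in K(A)$ with $q \in C_k(\{p\}, \vec{\phi})$; summing over the $\le \aleph_1$ sequences then gives $|B_q| \le \aleph_1$.

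The heart of the argument — the step I expect to require the most care — is this uniqueness. Fix realized $q$ and a sequence $\vec{\phi}$. Unfolding the membership condition $q \in C_k(\{p\}, \vec{\phi})$ recursively through Definition~\ref{precDef} generates a tree of auxiliary types: the root is $q$, and a node carrying a type $s$ at level $i+1$ has as children the induced types $r$ produced from the witnesses (the $q'(y) \models \phi_i$ in Case 1, resp. the $q'(\overline{y}) \in K(B)$ with $q' \models \phi_i$ in Case 2) supplied by the relevant clause. The crucial point is that this entire witness tree depends only on $q$ and $\vec{\phi}$, \emph{not} on $p$: the admissible witnesses and the induced types are computed from $\phi_i$ and the parameters of $s$ alone, with no reference to the sets $C_i$. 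The recursion then shows that $q \in C_k(\{p\},\vec{\phi})$ is equivalent to the conjunction of (i) the existence clauses ``$\exists y\,\phi_i$'' / ``$\exists \overline{y}\,\phi_i$'' at each internal node, which are $p$-independent, and (ii) the bottom-level requirement that every leaf type $r$ lie in $C_0 = \{p\}$, i.e. equal $p$. Since the leaves form a fixed, $p$-independent, and (by the existence clauses) nonempty set of types, the condition ``all leaves equal $p$'' can hold for at most one value of $p$. This is exactly where the fact that $C_0$ is a \emph{singleton} is used, and it is the reason the apparent continuum-branching of the descent — each witness $q'$ ranging over a perfect set of types — collapses to a single bottom value.

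Finally I would assemble the pieces: for each of the countably many realized types $q$ we obtain $|B_q| \le \aleph_1$, whence $K(A)\setminus K^*(A) = \bigcup_q B_q$ has size $\le \aleph_1$, and comeagerness follows as above. I expect the only remaining delicate points to be (a) verifying that the descent in \emph{both} Case 1 and Case 2 of Definition~\ref{precDef} genuinely produces its induced types independently of the sets $C_i$ — in particular that in Case 2 the passage to $K(B)$ and the $\beta$-unboundedness/completeness conditions on $\phi_i$ are $p$-independent — and (b) tracking the shifting arities along the chain (the number of variables grows as one descends), which affects the bookkeeping but not the counting.
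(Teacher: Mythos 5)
Your proposal is correct and is essentially the paper's own argument in expanded form: the paper's terse claim that $p(\overline{x})$ can be ``recovered'' from $tp(\overline{a}/A)$ together with the formulas and ordinals witnessing the $\prec$-chain is exactly your uniqueness step, namely that the witness tree unfolded from $q$ and $\vec{\phi}$ is $p$-independent and its (nonempty) set of leaves must all equal $p$, so each pair $(q,\vec{\phi})$ determines at most one $p$. Your counting ($\aleph_0$ realized types over the countable $A$ times $\aleph_1$ witness sequences) and your justification of the ``in particular'' clause match the paper's as well.
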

\begin{proof}
Suppose $p(\overline{x}) \not \in K^*(A)$. Then there are closed sets  $\{p(\overline{x})\} = C_0 \prec C_1 \prec \ldots \prec C_n$, and some $\overline{a} \in A$ realizing $C_n$. Then from examining the definition of $\prec$, we see that we can recover $p(\overline{x})$ from $tp(\overline{a}/A)$ and from the formulas and ordinals witnessing that $C_i \prec C_{i+1}$ for $i < n$. There are only $\aleph_1$-many possibilities for the latter, and so there are only $\aleph_1$-many $p(\overline{x})$ not in $K^*(A)$.
\end{proof}

\begin{lemma}\label{extend}
Suppose $\alpha \leq \beta$ are both in $\mathbf{C}_1$, $A$ is an $\alpha$-base, $p(\overline{x}) \in K^*(A)$, and $B \supseteq A$ is a $\beta$-atomic set which omits $\Gamma(p(\overline{x}))$. Let $$X = \{q(y) \in S^{1, \beta}_{at}(B): B a \mbox{ omits } \Gamma(p(\overline{x})) \mbox{ for some (any) realization $a$ of $q(y)$}\}.$$ Then $X$ is $\aleph_1$-comeager in $S^{1, \beta}_{at}(B)$.
\end{lemma}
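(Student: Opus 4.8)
The plan is to write the bad set $S^{1,\beta}_{at}(B)\setminus X$ as a union of $\aleph_1$-many closed nowhere dense sets. Since $q(y)\notin X$ exactly when $Ba$ fails to omit $\Gamma(p(\overline{x}))$, and since $B$ itself omits $\Gamma(p(\overline{x}))$, this failure must involve the new element $a$: there is a tuple $\overline{b}\in B$ with $tp(a\overline{b}/A)\in C$ for some closed set $C\geq\{p(\overline{x})\}$ appearing in the union $\Gamma(p(\overline{x}))=\bigcup_{C\geq\{p(\overline{x})\}}C$. So, for each such $C$ and each $\overline{b}\in B$, set $Z_{C,\overline{b}}=\{q(y)\in S^{1,\beta}_{at}(B): tp(a\overline{b}/A)\in C\}$ (for a/any realization $a$). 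Then $S^{1,\beta}_{at}(B)\setminus X=\bigcup_{C,\overline{b}}Z_{C,\overline{b}}$. The remark preceding the definition of $\Gamma$ gives at most $\aleph_1$-many $C$ with $\{p(\overline{x})\}\leq C$, and there are only countably many $\overline{b}\in B$, so this is a union of $\aleph_1$-many sets. Each $Z_{C,\overline{b}}$ is closed: the trace map $q\mapsto tp(a\overline{b}/A)$ is continuous (using the earlier lemma that on $S^{1,\beta}_{at}(B)$ the subspace topology is generated by the $\mathcal{L}_\beta(B)$-formulas), and both $C$ and $S^{\cdot,\gamma}_{at}(A)$ are closed. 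Hence it suffices to prove that each $Z_{C,\overline{b}}$ is nowhere dense in $S^{1,\beta}_{at}(B)$.

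I would prove nowhere density by contradiction, exactly reversing one step of the order $\prec$. Suppose $Z_{C,\overline{b}}$ has nonempty interior, so there is an $\mathcal{L}_\beta(B)$-formula $\theta(y)$ (with parameters $\overline{d}\in B$) such that $[\theta]\neq\emptyset$ and every $\beta$-atomic $q\supseteq\theta$ satisfies $tp(a\overline{b}/A)\in C$. I then invoke the first clause of Definition~\ref{precDef} to manufacture an $F\succ C$ witnessed by $B$. Concretely, take $\phi(y,\overline{z},\overline{w}):=\theta$ (absorbing the parameters $\overline{d}$ into the fresh variables $\overline{w}$, and $\overline{b}$ into $\overline{z}$) and let $F$ be the closed set defined from $C$ and $\phi$ as in clause (1). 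Using the operational description of that clause, $tp(\overline{b}\,\overline{d}/A)\in F$: the existential clause ``$\exists y\,\phi$'' holds since $[\theta]\neq\emptyset$, and the universal clause holds precisely because every $\beta$-atomic $q\supseteq\theta$ has its trace over $A$ lying in $C$, which is what $[\theta]\subseteq Z_{C,\overline{b}}$ asserts. Since $C\geq\{p(\overline{x})\}$ and $C\prec F$, we get $F\geq\{p(\overline{x})\}$, so $F$ is one of the closed sets comprising $\Gamma(p(\overline{x}))$. But then the tuple $\overline{b}\,\overline{d}\in B$ realizes $F\subseteq\Gamma(p(\overline{x}))$, so $B$ does not omit $\Gamma(p(\overline{x}))$, contradicting the hypothesis on $B$.

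The step I expect to be the main obstacle is the bookkeeping of language levels, since clause (1) of Definition~\ref{precDef} requires $C$, $F$, and $\phi$ all to live at a common level $\gamma\in\mathbf{C}_1$, and its interpretation applies only to $\gamma$-atomic sets. The clean resolution is to run clause (1) at level $\gamma=\beta$: one first checks, by tracing the structure of $\prec$ from $\{p(\overline{x})\}$, that every $C\geq\{p(\overline{x})\}$ has native level at most $\alpha^+$ (a clause-(2) step drops $\alpha^+$ to $\alpha$ and thereafter only clause (1) at level $\alpha$ can apply), so that $C$, being closed in $S^{\cdot,\alpha}_{at}(A)$ or $S^{\cdot,\alpha^+}_{at}(A)$, is also a closed subset of the larger space $S^{\cdot,\beta}_{at}(A)$ whenever $\beta\geq\alpha^+$. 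With $\gamma=\beta$ the defining formula $\phi=\theta\in\mathcal{L}_\beta$ is at the right level and $B$ is $\beta$-atomic, so the interpretation applies verbatim. The one case needing separate (but easy) attention is the degenerate $C=\{p(\overline{x})\}$, of native level $\alpha^+$: here $Z_{\{p(\overline{x})\},\overline{b}}$ is empty (hence nowhere dense) unless $\beta\geq\alpha^+$, in which case the same clause-(1) argument applies at level $\beta$. Everything else—continuity of the trace map, closedness of $Z_{C,\overline{b}}$, and the counting giving an $\aleph_1$-union—is routine, so once the level-matching is set up the conclusion that $X$ is $\aleph_1$-comeager follows immediately.
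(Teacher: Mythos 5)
Your core argument is the same as the paper's, essentially step for step: the paper also writes the complement of $X$ as a union, over tuples $\overline{a}\in B$ and closed sets $C\geq\{p(\overline{x})\}$, of sets $D=\{q: [q,\overline{a}]\in C\}$ (your $Z_{C,\overline{b}}$, with $[q,\overline{a}]$ denoting the trace of $q$ on $A\overline{a}$); it observes each is closed since the trace map is continuous and $C$ is closed; it counts that there are only $\aleph_1$-many such sets; and it proves nowhere density by the identical contradiction: a nonempty basic open $[\phi(y;\overline{a},\overline{b})]\subseteq D$ yields, via clause (1) of Definition~\ref{precDef}, a closed set $F\succ C$, hence $F\subseteq\Gamma(p(\overline{x}))$, which is then realized by the tuple $\overline{a}\overline{b}\in B$, contradicting that $B$ omits $\Gamma(p(\overline{x}))$.

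However, the ``level bookkeeping'' resolution that you present as the crux of the argument is false. You claim that every $C\geq\{p(\overline{x})\}$ has native level at most $\alpha^+$ because ``a clause-(2) step drops $\alpha^+$ to $\alpha$ and thereafter only clause (1) at level $\alpha$ can apply.'' But both clauses of Definition~\ref{precDef} are parametrized by an \emph{arbitrary} $\beta\geq\alpha$ in $\mathbf{C}_1$, and since $S^{k,\gamma}_{at}(A)$ is closed in $S^k_{at}(A)$ and contained in $S^{k,\gamma'}_{at}(A)$ whenever $\gamma\leq\gamma'$, any $C$ that is closed at some level is closed at every higher level, so either clause can always be re-applied at any higher level. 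In particular $\{p(\overline{x})\}$ itself, being a singleton, is closed in $S^{n_0,\gamma}_{at}(A)$ for every $\gamma\geq\alpha^+$, and so has $\prec$-successors of arbitrarily high native level; these genuinely occur in $\Gamma(p(\overline{x}))$, and their member types can perfectly well be $\beta$-atomic and hence realized by $a\overline{b}$, so they cannot be dropped from your covering. For such $C$ your nowhere-density argument, as written, does not apply: clause (1) ``at level $\beta$'' is unavailable when $C\not\subseteq S^{1+n,\beta}_{at}(A)$. The repair is the opposite of your resolution: run clause (1) at a common \emph{higher} level, e.g.\ the native level $\gamma$ of $C$. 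This is legitimate because $\phi=\theta\in\mathcal{L}_\beta\subseteq\mathcal{L}_\gamma$, the resulting $F$ is still $\succ C\geq\{p(\overline{x})\}$, and $tp(\overline{b}\,\overline{d}/A)\in S^{n+m,\beta}_{at}(A)\subseteq S^{n+m,\gamma}_{at}(A)$, so the realization of $F$ inside $B$ and the contradiction with omission go through unchanged. (The paper quietly sidesteps this point by fixing only those $C$ that are closed subsets of $S^{1+n,\beta}_{at}(A)$; the subtlety you identified is real, but bounding the levels of the sets in $\Gamma(p(\overline{x}))$ is not a correct way to settle it.)
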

\begin{proof}
For each $q(y) \in S^{1, \beta}_{at}(B)$ and each $\overline{a} \in B$, let $[q, \overline{a}](y, \overline{z}) \in S^{1+|a|, \beta}_{at}(A)$ be the set of all $\mathcal{L}_\beta(A)$-formulas $\phi(y, \overline{z})$ such that $\phi(y, \overline{a}) \in q(y)$. 

Fix $\overline{a} \in B$, say $|\overline{a}| = n$, and fix $C \geq \{p(\overline{x})\}$ with $C$ a closed subset of $S^{1+n, \beta}_{at}(A)$. It suffices to show that $D := \{q(y) \in S^{1, \beta}_{at}(B): [q, \overline{a}](y, \overline{z}) \in C\}$ is closed nowhere dense. It is clearly closed, since $C$ is.

Suppose it weren't nowhere dense, say $\mathcal{O} = \{q(y) \in S^{1, \beta}_{at}(B): \phi(y; \overline{a}, \overline{b}) \in q(y)\}$ is such that $\emptyset \not= \mathcal{O} \subseteq D$.

Let $m = |\overline{b}|$ and let $\overline{w}$ be a tuple of variables of length $m$. Let $F(\overline{z}, \overline{w}) \subseteq S^{n+m, \beta}_{at}(A)$ be the closed set defined as in the first clause of Definition~\ref{precDef}.

Then $F \succ C$ so $F \subset \Gamma(p(\overline{x}))$, but $\overline{a} \overline{b}$ realizes $F$, contradiction.

\end{proof}

\begin{lemma}
\label{amalg2}
Suppose $\alpha \leq \beta$ are both in $\mathbf{C}_1$, $A$ is an $\alpha$-base, $p(\overline{x}) \in K^*(A)$, $B \supseteq A$ is a $\beta$-base, and $B$ omits $\Gamma(p(\overline{x}))$. Let 
$$X = \{q(\overline{y}) \in K(B): B \overline{d} \mbox{ omits $\Gamma(p(\overline{x}))$ for some (any) $\overline{d}$ realizing $q(\overline{y})$}\}.$$ Then $X$ is $\aleph_1$-comeager in $K(B)$.
\end{lemma}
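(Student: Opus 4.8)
The plan is to mirror the structure of the proof of Lemma~\ref{extend}, but now working with the perfect kernel $K(B)$ and the second clause of Definition~\ref{precDef} in place of the first. As in that lemma, I would fix a target tuple $\overline{a} \in B$, say $|\overline{a}| = n$, and a closed set $C \geq \{p(\overline{x})\}$ with $C$ a closed subset of $S^{n_0 + n, \beta^+}_{at}(A)$, and show that the ``bad'' set $D := \{q(\overline{y}) \in K(B) : [q,\overline{a}](\overline{y},\overline{z}) \in C\}$ is closed nowhere dense in $K(B)$, where $[q,\overline{a}](\overline{y},\overline{z})$ denotes the set of $\mathcal{L}_{\beta^+}(A)$-formulas $\psi(\overline{y},\overline{z})$ with $\psi(\overline{y},\overline{a}) \in q(\overline{y})$. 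Since $K(B) \setminus X$ is contained in the union over all such $\overline{a}$ and all such $C$ of these $D$'s, and there are only $\aleph_1$-many such pairs (each $C \geq \{p(\overline{x})\}$ ranges over an $\aleph_1$-sized set, and $\overline{a}$ ranges over the countable set $B$), establishing that each $D$ is closed nowhere dense will give that $K(B) \setminus X$ is $\aleph_1$-meager, i.e.\ $X$ is $\aleph_1$-comeager.

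Closure of $D$ is immediate since $C$ is closed and the map $q \mapsto [q,\overline{a}]$ is continuous, so the crux is nowhere density. Here I would argue by contradiction exactly as in Lemma~\ref{extend}: suppose some nonempty relatively open $\mathcal{O} \subseteq K(B)$ lies inside $D$. Because $K(B)$ is the perfect kernel of $S^{n_0,\beta^+}_{at}(B)$, its relatively open sets are cut out by $\beta$-unbounded complete formulas, so I can write $\mathcal{O} = \{q(\overline{y}) \in K(B) : \phi(\overline{y};\overline{a},\overline{b}) \in q(\overline{y})\}$ for some $\beta$-unbounded complete $\mathcal{L}_\beta$-formula $\phi(\overline{y};\overline{z},\overline{w})$ and some $\overline{b} \in B$ with $|\overline{b}| = m$. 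I would then form the closed set $F(\overline{z},\overline{w}) \subseteq S^{n+m,\beta}_{at}(A)$ exactly as in the \emph{second} clause of Definition~\ref{precDef}, using this $\phi$. By construction $F \succ C$, hence $F \subseteq \Gamma(p(\overline{x}))$; but the assumption $\mathcal{O} \subseteq D$ is precisely what forces $\overline{a}\,\overline{b}$ to realize $F$, contradicting the hypothesis that $B$ omits $\Gamma(p(\overline{x}))$.

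The main obstacle I anticipate is verifying that the witnessing formula for $\mathcal{O}$ can genuinely be taken $\beta$-unbounded and complete, and that the defining conditions of $F$ in the second clause correctly capture ``$\overline{a}\,\overline{b}$ realizes $F$ implies the induced type over $A$ lands in $C$'' when the witness $\overline{d}$ is drawn from the perfect kernel $K(B)$ rather than from a single atomic type. Concretely, given $\mathcal{O} \subseteq D$, I need that every $q(\overline{y}) \in K(B)$ with $\phi(\overline{y};\overline{a},\overline{b}) \in q$ satisfies $[q,\overline{a}] \in C$, and then read off from the semantic description following the second clause that this is exactly the statement $tp(\overline{a}\,\overline{b}/A) \in F$. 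The matching between the syntactic definition of $F$ (the intersection of ``$\exists\overline{y}\,\phi$'' with the universally quantified implications ranging over $\beta$-unbounded complete $\tau$ and over $\mathcal{L}(A)$-formulas $\psi$ with $C \models \psi$) and this semantic reading is the delicate bookkeeping step; once it is in place, the contradiction is immediate and the counting argument closes the proof in the same way as Lemmas~\ref{amalg1} and~\ref{extend}.
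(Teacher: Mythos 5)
Your proposal follows the paper's proof essentially verbatim: fix $\overline{a} \in B$ and a closed $C \geq \{p(\overline{x})\}$, show $D := \{q \in K(B) : [q,\overline{a}] \in C\}$ is closed nowhere dense by supposing a nonempty basic open $\mathcal{O} \subseteq D$ cut out by a complete ($\beta$-unbounded) formula and deriving a contradiction via the set $F$ built from the second clause of Definition~\ref{precDef}, exactly as the paper does. The only slip is cosmetic: the witnessing formula lives in $\mathcal{L}_{\beta^+}$, not $\mathcal{L}_\beta$, since $K(B)$ sits inside $S^{n_0,\beta^+}_{at}(B)$, whose topology is generated by $\mathcal{L}_{\beta^+}(B)$-formulas.
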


\begin{proof}
For each $q(\overline{y}) \in K(B)$ and each $\overline{a} \in B$, let $[q, \overline{a}](\overline{y}, \overline{z}) \in S^{n_0 + |\overline{a}|, \beta^+}_{at}(A)$ be the set of all $\mathcal{L}_{\beta^+}(A)$-formulas $\phi(\overline{y}, \overline{z})$ such that $\phi(\overline{y}, \overline{a}) \in q(y)$. 

Fix $\overline{a} \in B$, say $|\overline{a}| = n$, and fix $C \subseteq S^{n_0 + n, \beta^+}_{at}(A)$ closed, with $C \geq \{p(\overline{x})\}$. It suffices to show that $D := \{q(\overline{y}) \in K(B): [q, \overline{a}](\overline{y}, \overline{z}) \in C\}$ is closed nowhere dense in $K(B)$. It is clearly closed, since $C$ is.

Suppose it weren't nowhere dense, say $\mathcal{O} = \{q(\overline{y}) \in K(B): \phi(\overline{y}; \overline{a}, \overline{b}) \in q(\overline{y})\}$ is such that $\emptyset \not= \mathcal{O} \subseteq D$. We can suppose $\phi(\overline{y}, \overline{z}, \overline{w})$ is complete.

Let $m = |\overline{b}|$ and let $\overline{w}$ be a tuple of variables of length $m$. Let $F(\overline{z}, \overline{w}) \subseteq S^{n + m, \beta}_{at}(A)$ be the closed set defined as in the second clause of Definition~\ref{precDef}.

Then $F \succ C$ so $F \subset \Gamma(p(\overline{x}))$, but $\overline{a} \overline{b}$ realizes $F$, contradiction.
\end{proof}

We conclude the proof of Theorem~\ref{main} with:

\begin{lemma}
$\mathbb{P}$ is a system of promises, and $\mbox{Spec}_{\mathbf{K}_T}(\mathbb{P}) \cap \mathbf{C}_1 = \emptyset$.
\end{lemma}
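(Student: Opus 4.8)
The plan is to verify the three defining properties of a system of promises (Invariance, Extendibility, and the domain/closedness conditions) for $\mathbb{P}$, and then separately establish the spectrum disjointness $\mathrm{Spec}_{\mathbf{K}_T}(\mathbb{P}) \cap \mathbf{C}_1 = \emptyset$. The first two conditions on $\mathbb{P}$ should be quick: each $\Gamma(p(\overline{x}))$ is by construction a subset of $S_{at}(A)$ for the unique $\alpha$-base $A$ on which $p$ lives, and it is nonempty since it contains $\{p(\overline{x})\}$ itself. Invariance follows because the whole construction — the ordering $\prec$ via Definition~\ref{precDef}, the perfect kernel $K(A)$, the notion of $\alpha$-unboundedness — is defined purely from the $\mathcal{L}$-formulas and is therefore preserved by any $\sigma \in \mathrm{Aut}(\mathfrak{C})$; applying $\sigma$ sends $\Gamma(p(\overline{x}))$ over $A$ to $\Gamma(\sigma(p(\overline{x})))$ over $\sigma(A)$, so $\mathbb{P}$ is closed under automorphisms.

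The \textbf{Extendibility} condition is where Lemmas~\ref{extend} and~\ref{amalg2} do the work, and this is the main step to assemble carefully. Given an atomic set $A$ and a countable $\Phi \subseteq \mathbb{P}$ omitted by $A$, I would first pass to arbitrarily large $\alpha \in \mathbf{C}_1$ and build an $\alpha$-base $B \supseteq A$ using the unconditioned Extendibility already available for bases (the remarks in the Setup that $\alpha$-bases exist over any given atomic set); the point is to arrange simultaneously that $B$ still omits every $\Gamma \in \Phi$. Since $\Phi$ is countable, each $\Gamma(p(\overline{x})) \in \Phi$ contributes a requirement, and Lemma~\ref{amalg2} tells us that the set of $q(\overline{y}) \in K(B)$ whose realizations keep $\Gamma(p(\overline{x}))$ omitted is $\aleph_1$-comeager. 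A countable intersection of $\aleph_1$-comeager sets is still $\aleph_1$-comeager, hence (invoking $\mathsf{Cov}(\mathcal{K}) \geq \aleph_2$) nonempty: one can realize such a $q$ and iterate up a countable chain, taking the union to obtain a larger base omitting $\Phi$. The bookkeeping of chaining Lemmas~\ref{extend} and~\ref{amalg2} through $\omega$ stages, while keeping the result a genuine $\beta$-base for some $\beta \in \mathbf{C}_1$, is the part I expect to be most delicate.

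For the \textbf{spectrum disjointness}, fix $\alpha \in \mathbf{C}_1$ and an $\alpha$-base $A$; I must show $\mathbb{P}$ fails the amalgamation property at $\alpha$, i.e.\ no $(\alpha,\mathbb{P})$-base $(A,\Phi)$ admits solutions to all $\mathbb{P}$-amalgamation problems over it. The idea is to exploit a $p(\overline{x}) \in K^*(A)$, which exists in abundance by Lemma~\ref{amalg1} ($K^*(A)$ is $\aleph_1$-comeager in the perfect set $K(A)$, so is certainly nonempty). Given any promised $\Phi$, use Extendibility to find $(B_0,\Psi_0),(B_1,\Psi_1)$ each realizing $p$ in incompatible ways — concretely, arrange the two sides to realize $p(\overline{x})$ over $A$ while each promises $\Gamma(p(\overline{x}))$ on the \emph{other} copy, so that any common atomic amalgam would be forced to both realize and omit $\Gamma(p(\overline{x}))$. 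The order $\leq$ was engineered precisely so that realizing $p$ on one side makes $p$ hard to realize on the other; the definition of $\prec$ in Definition~\ref{precDef} encodes exactly the amalgamation obstruction, and $p \in K^*(A)$ guarantees the needed types stay omittable everywhere except where we force a clash.

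The \textbf{hard part} will be the spectrum-disjointness step: one must present the failure of amalgamation uniformly over \emph{all} choices of $(A,\Phi)$ with $A$ an $\alpha$-base, not just for a single cleverly chosen base, since the amalgamation property at $\alpha$ quantifies existentially over the base $(A,\Phi)$. I would handle this by showing that for \emph{any} $(\alpha,\mathbb{P})$-base $(A,\Phi)$ one can choose a fresh $p(\overline{x}) \in K^*(A)$ avoiding the finitely- or countably-many constraints already recorded in $\Phi$ (again using the comeagerness from Lemma~\ref{amalg1}), and then build the two incompatible extensions around this $p$. Verifying that the resulting amalgamation problem is genuinely unsolvable — that no atomic $B$ with the prescribed promises can receive both $f_1$ and $f_2$ — reduces to the definitional clash between realizing $p$ and omitting $\Gamma(p(\overline{x}))$, but writing this out so that it applies to every base simultaneously, and checking the types involved remain atomic throughout, is the delicate bookkeeping I would spend the most care on.
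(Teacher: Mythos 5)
Your proposal follows essentially the same route as the paper: invariance is immediate from the automorphism-invariance of the construction, extendibility comes from iterating the comeagerness lemmas (using $\mathsf{Cov}(\mathcal{K}) \geq \aleph_2$ to keep the relevant $\aleph_1$-comeager sets nonempty), and spectrum disjointness is shown uniformly over \emph{every} $(\alpha,\mathbb{P})$-base by producing an unsolvable problem from the clash between realizing a type $q \in K^*(A)$ and promising $\Gamma(q)$. You also correctly identify the two points of care: that the failure of amalgamation must be exhibited over every base, not just one, and that a ``fresh'' $q$ must be chosen compatible with the promises already in $\Phi$ (this is exactly the paper's set $X$, $\aleph_1$-comeager by Lemma~\ref{amalg2}, intersected with $K^*(A)$ via Lemma~\ref{amalg1}).

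One concrete correction: your description of the two extensions as \emph{each} realizing $p(\overline{x})$ while \emph{each} promising $\Gamma(p(\overline{x}))$ ``on the other copy'' is, read literally, inconsistent. Since $\leq$ is reflexive, $p \in \Gamma(p)$, so no $\mathbb{P}$-atomic set can simultaneously realize $p$ over $A$ and have $\Gamma(p)$ among its promises --- such a pair would violate the requirement that a $\mathbb{P}$-atomic set omit its own promise set. The construction must be asymmetric, and this is what the paper does: with $\overline{a}$ realizing $q \in X \cap K^*(A)$, the problem is $((A,\Phi),\ (A\overline{a},\Phi),\ (A,\Phi \cup \{\Gamma(q)\}))$; only the first extension realizes $q$, only the second promises $\Gamma(q)$, and any solution would have to both realize $q$ over $A$ and omit $\Gamma(q) \ni q$. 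Relatedly, your lemma attributions are swapped in places: extendibility is an iterated application of Lemma~\ref{extend} (realizing arbitrary $1$-types while preserving omissions, which is what building a $\beta$-base requires), while Lemma~\ref{amalg2} (realizing types in the kernel $K(A)$) is what makes $X$ comeager in the spectrum-disjointness step; Lemma~\ref{amalg1} only guarantees $K^*(A)$ is co-$\aleph_1$ in $K(A)$. These are repairable slips rather than a failure of the method.
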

\begin{proof} Invariance for $\mathbb{P}$ is clear.

Extendibility follows from an iterated application of Lemma~\ref{extend}.

Finally, suppose $\alpha \in \mathbf{C}_1$ and $(A, \Phi)$ is an $(\alpha, \mathbb{P})$-base. Write $\Phi = \{\Gamma(p_n(\overline{x}_n)): n < \omega\}$, where $p_n(\overline{x}_n) \in K^*(A_n)$ for some $A_n \subseteq A$. Let $X = \{q(\overline{x}) \in K(A): A \overline{a} \mbox{ omits } \Phi \mbox{ for some (any) realization $\overline{a}$ of $q(\overline{x})$}\}$. By applying Lemma~\ref{amalg2} to each $p_n(\overline{x}_n)$ we get that $X$ is $\aleph_1$-comeager in $K(A)$. Hence by Lemma~\ref{amalg1} we can find $q(\overline{x}) \in X \cap K^*(A)$. Let $\overline{a}$ realize $q(\overline{x})$; then the $\mathbb{P}$-amalgamation problem $((A, \Phi), (A \overline{a}, \Phi), (A, \Phi \cup \{q(\overline{x})\}))$ has no solution.

\end{proof}

\end{document}